\documentclass{article}

\usepackage{graphicx} 
\usepackage{amsfonts}
\usepackage{amssymb} 
\usepackage{latexsym}
\usepackage{pifont}
\usepackage{amssymb,amsfonts}
\usepackage{amsthm}
\usepackage{amsmath}
\usepackage{graphics,graphicx,psfrag}
\usepackage{amscd}
\usepackage{epsfig}
\usepackage[all]{xy}
\usepackage{float}
\usepackage[margin=1.25in]{geometry}

\newtheorem{theorem}{Theorem}[section]
\newtheorem{proposition}{Proposition}[section]
\newtheorem{definition}{Definition}[section]
\newtheorem{remark}{Remark}[section]
\newtheorem{lemma}{Lemma}[section]
\newtheorem{example}{Example}[section]

\newtheorem{corollary}{Corollary}[section]

\usepackage[hang,flushmargin]{footmisc} 
\usepackage{framed}
\usepackage[usenames,dvipsnames]{color}
\usepackage{tikz}
\usepackage{xcolor}
\usepackage{parskip}
\usetikzlibrary{matrix,arrows}
\usetikzlibrary{decorations.pathmorphing}
\usepackage{enumitem}
\usepackage{setspace}
\usepackage[affil-it]{authblk}
\setlist{nolistsep}

\usepackage{setspace}
\setstretch{1.5}

\newcommand{\Modr}{ {\rm{\bf Mod}}_R }
\newcommand{\Modl}{ \mbox{}_R{\rm{\bf Mod}} }

\newcommand{\Modgrad}{ \mbox{}_A{\rm{\bf Mod}} }
\newcommand{\Cadr}{ {\rm {\bf Ch}}({\rm {\bf Mod}}_R) }
\newcommand{\Cadl}{ {\rm {\bf Ch}}(\mbox{}_R{\rm {\bf Mod}}) }
\newcommand{\Complexes}{ {\rm {\bf Ch}}(\mathcal{C}) }   

\begin{document}

\title{Gorenstein homological dimensions \\ and Abelian model structures}

\author{Marco A. P\'erez B.}

\affil{{\rm D\'epartement de Math\'ematiques.} \\ {\rm Universit\'e du Qu\'ebec \`a Montr\'eal.} \\ {\rm marco.perez$@$cirget.ca}}

\maketitle


\begin{abstract}
\noindent We construct new complete cotorsion pairs in the categories of modules and chain complexes over a Gorenstein ring $R$, from the notions of Gorenstein homological dimensions, in order to obtain new Abelian model structures on both categories. If $r$ is a positive integer, we show that the class of modules with Gorenstein-projective (or Gorenstein-flat) dimension $\leq r$ forms the left half of a complete cotorsion pair. Analogous results also hold for chain complexes over $R$. In any Gorenstein category, we prove that the class of objects with Gorenstein-injective dimension $\leq r$ is the right half of a complete cotorsion pair. The method we use in each case consists in constructing a cogenerating set for each pair. Later on, we give some applications of these results. First, as an extension of some results by M. Hovey and J. Gillespie, we establish a bijective correspondence between the class of differential graded $r$-projective complexes and the class of modules over $R[x] / (x^2)$ with Gorenstein-projective dimension $\leq r$, provided $R$ is left and right Noetherian with finite global dimension. The same correspondence is also valid for the (Gorenstein-)injective and (Gorenstein-)flat dimensions. 
\end{abstract}  


\section{Introduction}

One of the topics studied in homological algebra is the notion of projective, injective and flat modules over $R$, where $R$ is an associative ring with identity. One interesting property is the existence of projective resolutions and injective coresolutions that allow us to define the projective and injective dimension of every module. In the particular case where $R$ is a Gorenstein ring, another type of homological algebra can be developed in $\Modl$, the category of left $R$-modules, via the notions of Gorenstein-projective and Gorenstein-injective modules. This context suits very well to appreciate the connection existing between model category structures and the theory of cotorsion pairs, which was described by M. Hovey. 

\vspace{0.5cm}

\begin{theorem}[M. Hovey, {\cite[Theorem 2.2]{Hovey}}] Let $\mathcal{C}$ be a bicomplete Abelian category. If $(\mathcal{A}, \mathcal{B} \cap \mathcal{D})$ and $(\mathcal{A} \cap \mathcal{D}, \mathcal{B})$ are two compatible cotorsion pairs, such that the class $\mathcal{D}$ is thick, then there exists a unique Abelian model structure on $\mathcal{C}$ such that $\mathcal{A}$ is the class of cofibrant objects, $\mathcal{B}$ is the class of fibrant objects, and $\mathcal{D}$ is the class of trivial objects. 

Conversely, if $\mathcal{C}$ is equipped with an Abelian model structure where $\mathcal{C}_{of}$, $\mathcal{F}_{ib}$ and $\mathcal{T}$ denote the classes of cofibrant, fibrant and trivial objects, respectively, then $(\mathcal{C}_{of} \cap \mathcal{T}, \mathcal{F}_{ib})$ and $(\mathcal{C}_{of}, \mathcal{F}_{ib} \cap \mathcal{T})$ are two compatible and complete cotorsion pairs. 
 \end{theorem}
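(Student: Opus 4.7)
The plan is to establish both directions of the correspondence by reading the model-category axioms as structural statements about the given cotorsion pairs. For the forward direction I would declare that a morphism $f$ is a cofibration when it is a monomorphism with cokernel in $\mathcal{A}$, a fibration when it is an epimorphism with kernel in $\mathcal{B}$, and trivial on either side when the cokernel or kernel additionally lies in $\mathcal{D}$. Weak equivalences are then defined as those maps that factor as a trivial cofibration followed by a trivial fibration. The axioms to verify are the retract axiom, the two lifting axioms, the two factorization axioms, and the 2-out-of-3 property.

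Lifting and factorization fall out of the cotorsion pair hypotheses fairly mechanically. The orthogonality $\mathrm{Ext}^{1}(\mathcal{A}, \mathcal{B} \cap \mathcal{D}) = 0$ and $\mathrm{Ext}^{1}(\mathcal{A} \cap \mathcal{D}, \mathcal{B}) = 0$ produces the required lifts by a diagram chase using pushouts along a cofibration and pullbacks along a fibration inside the ambient abelian category. The factorizations of an arbitrary morphism follow from completeness of the two cotorsion pairs, after first replacing $f \colon X \to Y$ by the canonical monomorphism $X \to X \oplus Y$ or the canonical epimorphism $X \oplus Y \to Y$ and applying special precovers or preenvelopes. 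The retract axiom reduces to the stability of $\mathcal{A}$, $\mathcal{B}$, and $\mathcal{D}$ under retracts, which is standard for the halves of a cotorsion pair and built into thickness.

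The main obstacle is the 2-out-of-3 property for weak equivalences, and this is exactly where thickness of $\mathcal{D}$ is used in an essential way. I would first prove the intrinsic characterization that $f$ is a weak equivalence if and only if, in any (equivalently, in some) mono-epi factorization $f = p \circ i$, the cokernel of the monic part and the kernel of the epic part both lie in $\mathcal{D}$. Once this description is available, 2-out-of-3 follows by assembling the short exact sequences associated to the mono-epi factorizations of $f$, $g$, and $g \circ f$, applying the snake lemma, and invoking closure of $\mathcal{D}$ under extensions, under kernels of epimorphisms whose cokernel lies in $\mathcal{D}$, and under cokernels of monomorphisms whose kernel lies in $\mathcal{D}$.

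For the converse, I would read cofibrant, fibrant, and trivial objects off the model structure via the canonical maps $0 \to X$ and $X \to 0$. Applying the factorization axioms to these maps produces the short exact sequences that witness completeness of the two candidate cotorsion pairs, while the lifting axioms translated through pushout and pullback diagrams along short exact sequences of the form $0 \to X \to Y \to Z \to 0$ yield the $\mathrm{Ext}^{1}$-orthogonality characterizing each pair. A standard retract argument identifies the resulting classes with $\mathcal{C}_{of}$, $\mathcal{F}_{ib}$, and $\mathcal{T}$, and compatibility of the two pairs is automatic from the way the classes of trivial (co)fibrations were isolated.
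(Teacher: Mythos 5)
Your sketch is essentially Hovey's own argument for his Theorem 2.2; note that the paper you are reading does not prove this statement at all --- it is quoted as background and the proof is deferred to \cite[Theorem 2.2]{Hovey}, so there is no internal proof to compare against. Your outline (define cofibrations/fibrations via $\mathcal{A}$ and $\mathcal{B}$, weak equivalences as trivial cofibration followed by trivial fibration, get lifting from ${\rm Ext}^1$-orthogonality, factorizations from completeness via the graph/mapping-cylinder trick, and reduce 2-out-of-3 to an intrinsic characterization of weak equivalences through mono--epi factorizations plus thickness of $\mathcal{D}$) is the standard and correct route, as is the converse via factoring $0 \longrightarrow X$ and $X \longrightarrow 0$ and turning lifting squares into splittings of extensions. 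Two small points worth flagging: the paper's paraphrase omits the hypothesis that the two cotorsion pairs be \emph{complete}, which your factorization step rightly uses and which is part of Hovey's actual hypotheses; and the claim that \emph{any} mono--epi factorization of a weak equivalence has cokernel and kernel in $\mathcal{D}$ is precisely the nontrivial lemma where thickness enters, so in a full write-up that independence-of-factorization statement (and the retract axiom for weak equivalences, plus uniqueness of the resulting model structure) would need to be proved rather than asserted.
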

 
 \vspace{0.5cm}
 
As an application of this result, to which we shall refer as Hovey's correspondence, the same Hovey proved that the class of Gorenstein-projective modules forms the left half of a complete cotorsion pair (see \cite[Theorem 8.3]{Hovey}), in order to obtain a unique Abelian model structure on $\Modl$ where such modules form the class of cofibrant objects, and the trivial objects are given by the modules with finite projective dimension. Hovey also proved the existence of another model structure with the same trivial objects, and where a module is fibrant if, and only if, it is Gorenstein-injective (See \cite[Theorem 8.6]{Hovey}). We shall refer to these model structures as \underline{Gorenstein-projective} and \underline{Gorenstein-injective} model structures, respectively. 

One of the goals of this work is to extend Hovey's results to any Gorenstein-homological dimension and also to the category of chain complexes over $R$, applying Hovey's correspondence. Specifically, if $R$ is an $n$-Gorenstein ring and if $0 < r \leq n$ is an integer, then we show that the class $\mathcal{GP}_r$ of modules with Gorenstein-projective dimension at most $r$ is the left half of a cotorsion pair $(\mathcal{GP}_r, (\mathcal{GP}_r)^\perp)$ cogenerated by a set, and so complete. On the other hand, the class $\mathcal{P}_r$ of modules with projective dimension bounded by $r$ is also the left half of another cotorsion pair $(\mathcal{P}_r, (\mathcal{P}_r)^\perp)$ cogenerated by a set. These two pairs turn out to be compatible, and so we obtain the model structure described in Theorem \ref{Gorrprojmodel}. We also show that the same model structure exists on the category $\Cadl$ of chain complexes. 

Concerning the dual notion of Gorenstein-injective dimension, we shall show that the class $\mathcal{GI}_r(\mathcal{C})$ of objects in a locally Noetherian Gorenstein category $\mathcal{C}$ with Gorenstein-injective dimension $\leq r$, is the right half of a complete cotorsion pair $(\mbox{}^\perp(\mathcal{GI}_r(\mathcal{C})), \mathcal{GI}_r(\mathcal{C}))$. For the case $r = 0$ and $\mathcal{C} = \Modl$, with $R$ a Gorenstein ring, Hovey proved that the set of all $S \in \Omega^i(J)$, with $i \geq 0$ and $J$ running over the set of indecomposable injective modules, is a cogenerating set of $(\mbox{}^\perp(\mathcal{GI}_0), \mathcal{GI}_0)$. We shall show that the objects $S \in \Omega^i(J)$, with $i \geq r$ and $J$ running over the set of indecomposable injective objects of $\mathcal{C}$, provide a cogenerating set for $(\mbox{}^\perp(\mathcal{GI}_r(\mathcal{C})), \mathcal{GI}_r(\mathcal{C}))$. 

With respect to the Gorenstein-flat dimension, Hovey along with J. Gillespie found a new Abelian model structure on $\Modl$ where the cofibrant objects are given by the Gorenstein-flat modules, the fibrant objects by the cotorsion modules, and the trivial objects by the modules with finite flat dimension. In order to extend this model structure to the class $\mathcal{GF}_r$ of modules with Gorenstein-flat dimension bounded by $r$, we shall show that for every $M \in \mathcal{GF}_r$ it is possible to have a filtration by the set of all $N \in \mathcal{GF}_r$ such that ${\rm Card}(N) \leq \kappa$, where $\kappa$ is an infinite cardinal with $\kappa > {\rm Card}(R)$. During the process, we redefine the notion of pure sub-modules for the context of Gorenstein homological algebra. The latter result is motivated by the fact that every module in the class $\mathcal{F}_r$ of modules with flat dimension $\leq r$ has a filtration by the set of $N \in \mathcal{F}_r$ with cardinality $\leq \kappa$ (see \cite[Lemma 6.1]{Perez}). It shall follow that $(\mathcal{GF}_r, (\mathcal{GF}_r)^\perp)$ and $(\mathcal{F}_r, (\mathcal{F}_r)^\perp)$ are two complete and compatible cotorsion pairs, and hence Hovey's correspondence yields the model structure described in Theorem \ref{gorrflatmodel}. To extend this model structure to the category of chain complexes, we shall consider cotorsion pairs in $\Cadl$ where orthogonality is defined with respect to the functor $\overline{{\rm Ext}}^1_{\Cadl}(-,-)$, the first right derived functor of $\overline{{\rm Hom}}(-,-)$, the internal Hom in the closed monoidal category $(\Cadl, \overline{\otimes})$, where $\overline{\otimes}$ is the bar-tensor product considered by Gillespie in \cite{Gil} to obtain the flat model structure. 

There is a connection between the projective model structure on $\Cadl$ described in \cite[Section 2.3]{HoveyBook} and Hovey's Gorenstein-projective model structure on modules over the $\mathbb{Z}$-graded ring $R[x]/(x^2)$. Specifically, there is a bijective correspondence between the class of differential graded projective complexes (cofibrant objects of the former model structure) and the class of Gorenstein-projective $R[x] / (x^2)$-modules (cofibrant objects of the latter model structure), provided $R$ is a left and right Noetherian ring with finite global dimension (See \cite[Propositions 3.6, 3.8 and 3.10]{HoveyGillespie}). This correspondence also appears in the injective and flat cases. We extend this result to any homological dimension, i.e. we shall see (in Theorem \ref{perezcorresponde}) that the class of differential graded $r$-projective complexes (cofibrant objects of the $r$-projective model structure constructed in \cite{Perez}) is in bijective correspondence with the class of modules with Gorenstein-projective dimension at most $r$ (cofibrant objects of the model structure described in Theorem \ref{Gorrprojmodel}).


\section{Preliminaries}

In this section we present some preliminary notions and known facts in order to set some of the notation we shall use and the background we shall need. Since this work in influenced by Hovey's correspondence, we consider pertinent to recall first the terminology appearing in its statement. 

A \underline{model category} is given by a bicomplete category $\mathcal{C}$, together with three classes of maps in $\mathcal{C}$, called the cofibrations, fibrations and weak equivalences, satisfying the following axioms:
\begin{itemize}
\item[{\bf (1)}] Given a commutative diagram
\[ \begin{tikzpicture}
\matrix (m) [matrix of math nodes, row sep=2em, column sep=2em]
{ X & A \\ Y & B \\ };
\path[->]
(m-1-1) edge (m-1-2) edge node[left] {$f$} (m-2-1)
(m-1-2) edge node[right] {$g$} (m-2-2)
(m-2-1) edge (m-2-2);
\path[dotted,->]
(m-2-1) edge node[above,sloped] {$d$} (m-1-2);
\end{tikzpicture} \]
where $f$ is a cofibration, $g$ is a fibration, and where either $f$ or $g$ is a weak equivalence, then there exists a diagonal filler $d$ making the inner triangles commutative.  

\item[{\bf (2)}] Every map $h$ in $\mathcal{C}$ can be factored as $h = g \circ f = g' \circ f'$, where $f$ is cofibration and a weak equivalence, $f'$ is a cofibration, $g$ is a fibration, and $g'$ is a fibration and a weak equivalence. 

\item[{\bf (3)}] Let $f$ and $g$ be two maps in $\mathcal{C}$ such that the composition $g \circ f$ makes sense. If two out of three of the maps $f$, $g$ and $g \circ f$ are weak equivalences, then so is the third. 
\end{itemize}

Let $0$ and $1$ be the initial and terminal objects of $\mathcal{C}$, respectively. An object $X$ in $\mathcal{C}$ is said to be \underline{cofibrant} if the map $0 \longrightarrow X$ is a cofibration. \underline{Fibrant objects} are defined dually. Finally, $X$ is said to be \underline{trivial} if $0 \longrightarrow X$ is a weak equivalence.  

The book \cite{HoveyBook} is a good introduction to the theory of model categories. In this reference, factorizations in Axiom {\bf (2)} are assumed to be functorial. We shall consider functorial factorizations in this work, since models structures obtained from Hovey's correspondence satisfy that extra condition. 

The triplet formed by the above three classes of maps is known as a \underline{model} \underline{structure} on $\mathcal{C}$. We are interested in a special type of model structures on Abelian categories, known as \underline{Abelian model struc}- \underline{tures}, first defined by M. Hovey in \cite{Hovey}, and that satisfy the following conditions: 
\begin{itemize}
\item[ {\bf (1)}] $f$ is a cofibration (and a weak equivalence) if, and only if, $f$ is a monomorphism with cofibrant (and trivial) cokernel. 

\item[ {\bf (2)}] $f$ is a fibration (and a weak equivalence) if, and only if, $f$ is an epimorphism with fibrant (and trivial) kernel. 
\end{itemize} 

Two classes of objects $\mathcal{A}$ and $\mathcal{B}$ in an Abelian category $\mathcal{C}$ form a \underline{cotorsion pair} $(\mathcal{A,B})$ if  
\begin{align*}
\mathcal{A} & = \mbox{}^\perp\mathcal{B} := \{ A \in {\rm Ob}(\mathcal{C}) \mbox{ {\rm : }} {\rm Ext}^1_{\mathcal{C}}(A,B) = 0, \mbox{ for every }B \in \mathcal{B} \}, \mbox{ and } \\ 
\mathcal{B} & = \mathcal{A}^\perp := \{ B \in {\rm Ob}(\mathcal{C}) \mbox{ {\rm : }} {\rm Ext}^1_{\mathcal{C}}(A,B) = 0, \mbox{ for every }A \in \mathcal{A} \}.
\end{align*} 
\begin{itemize}
\item[$\bullet$] The pair $(\mathcal{A,B})$ is said to be \underline{complete} if for every object $X \in {\rm Ob}(\mathcal{C})$ there exist exact sequences $0 \longrightarrow B \longrightarrow A \longrightarrow X \longrightarrow 0$ and $0 \longrightarrow X \longrightarrow B' \longrightarrow A' \longrightarrow 0$ where $A, A' \in \mathcal{A}$ and $B, B' \in \mathcal{B}$. 

\item[$\bullet$] If $\mathcal{S} \subseteq \mathcal{A}$ is a set such that $\mathcal{B} = \mathcal{S}^\perp$, then the pair $(\mathcal{A,B})$ is said to be \underline{cogenerated} by $\mathcal{S}$. By the \underline{Eklof and Trlifaj Theorem}, every cotorsion pair cogenerated by a set is complete (See \cite[Theorem 10]{Eklof}\footnote{Although this result was originally proven in \cite[Theorem 10]{Eklof} for the category of modules, it is also valid in every Grothendieck category, as specified in \cite[Remark 3.2.2 (b)]{Gobel}.}). 

\item[$\bullet$] Let $M$ be a left $R$-module. We recall that a family $(M_\alpha )_{\alpha < \lambda}$ of sub-modules of $M$, indexed by some ordinal $\lambda$, is a \underline{continuous chain} of $M$ if: 
\begin{itemize}
\item[ {\bf (1)}] $M_0 = 0$ and $M = \bigcup_{\alpha < \lambda} M_\alpha$. 

\item[ {\bf (2)}] $M_\alpha \subseteq M_{\alpha'}$ whenever $\alpha \leq \alpha'$. 

\item[ {\bf (3)}] $M_\beta = \bigcup_{\alpha < \beta} M_\alpha$ for every limit ordinal $\beta < \lambda$. 
\end{itemize}

Let $(M_\alpha)_{\alpha < \lambda}$ be a continuous chain of $M$ and $\mathcal{S}$ be a set of left $R$-modules. Then $(M_\alpha)_{\alpha < \lambda}$ is said to be an \underline{$\mathcal{S}$-filtration} of $M$ if $M_{\alpha + 1} / M_\alpha$ is isomorphic to an element in $\mathcal{S}$ for every $\alpha + 1 < \lambda$. The module $M$ is said to be $\mathcal{S}$-filtered.\footnote{Continuous chains and filtrations of complexes are defined in the same way.} 

If $(\mathcal{A,B})$ is a cotorsion pair and $\mathcal{S} \subseteq \mathcal{A}$ is a set such that every object in $\mathcal{A}$ is $\mathcal{S}$-filtered, then $(\mathcal{A,B})$ is cogenerated by $\mathcal{S}$ (See \cite[Proposition 2.1]{Perez}). 
\end{itemize}

\vspace{0.5cm}

Two cotorsions pairs of the form $(\mathcal{A}, \mathcal{B} \cap \mathcal{D})$ and $(\mathcal{A} \cap \mathcal{D}, \mathcal{B})$ are called \underline{compatible}. 

A class of objects $\mathcal{A}$ in $\mathcal{C}$ is said to be \underline{thick} if it is closed under retracts, and if for every short exact sequence $0 \longrightarrow A' \longrightarrow A \longrightarrow A'' \longrightarrow 0$, if two out of three of the terms $A'$, $A$ and $A''$ are in $\mathcal{A}$, then so is the third. For example, the class $\mathcal{E}$ of exact chain complexes is thick, and so is the class $\mathcal{W}$ of $R$-modules with finite projective dimension, provided $R$ is a Gorenstein ring. 

We conclude this section recalling some constructions and functors in the category of chain complexes over a ring. Given two chain complexes $X \in \Cadr$ and $Y \in \Cadl$, the \underline{standard tensor product complex} $X \otimes Y$ is the chain complex (of Abelian groups) given at each $n$th term by $(X \otimes Y)_n := \bigoplus_{k \in \mathbb{Z}} X_k \otimes_R Y_{n-k}$, whose the boundary maps are defined by $\partial^{X \otimes Y}_n(x \otimes y) := \partial^X_k(x) \otimes y + (-1)^{k}x \otimes \partial^Y_{n-k}(y)$, for every $x \otimes y \in X_k \otimes Y_{n-k}$. This construction defines a functor $- \otimes -$, from which one constructs left derived functors ${\rm Tor}_i(-,-)$. The \underline{bar-tensor product} of $X$ and $Y$ is the chain complex $X \overline{\otimes} Y$ of Abelian groups given by $(X \overline{\otimes} Y)_n := (X \otimes Y)_n / B_n(X \otimes Y)$, for every $n \in \mathbb{Z}$, whose boundary maps are given by $\partial^{X \overline{\otimes} Y}_n(\overline{x \otimes y}) := \overline{\partial^X(x) \otimes y}$. We denote the left derived functors of $- \overline{\otimes} -$ by $\overline{{\rm Tor}}_i(-,-)$. 

Now let $X$ and $Y$ be two chain complexes in $\Cadl$. The complex ${\rm Hom}'(X, Y)$ in $\Cadl$ is defined by ${\rm Hom}'(X, Y)_n := \prod_{k \in \mathbb{Z}} {\rm Hom}_R(X_k, Y_{n+k})$, for every $n \in \mathbb{Z}$. The boundaries are given by $\partial^{{\rm Hom}'(X,Y)}_n(f) := (\partial^Y_{k+n} \circ f_k - (-1)^nf_{k-1} \circ \partial^X_k)_{k \in \mathbb{Z}}$, for every $f = (f_k)_{k \in \mathbb{Z}} \in {\rm Hom}'(X, Y)_n$. From ${\rm Hom}'(X,Y)$ one constructs the \underline{bar-Hom} complex $\overline{{\rm Hom}}(X,Y)$ by setting $\overline{{\rm Hom}}(X,Y)_n := Z_n({\rm Hom}'(X,Y))$, for every $n \in \mathbb{Z}$. The boundary maps are given by $\partial^{\overline{{\rm Hom}}(X,Y)}_n(f) := (\partial^Y_{k+n} \circ f_k)_{k \in \mathbb{Z}}$, for every $f = (f_k)_{k \in \mathbb{Z}} \in \overline{{\rm Hom}}(X,Y)_n$. Notice that for $n = 0$ we have $\partial^Y_k \circ f_k = f_{k-1} \circ \partial^X_k$, i.e. $f$ is a chain map and $\overline{{\rm Hom}}(X,Y)_0 = {\rm Hom}(X,Y)$. For $n = 1$, $\partial^Y_{k+1} \circ f_k + f_{k-1} \circ \partial^X_k = 0$, i.e. $f$ is a chain homotopy from $0$ to $0$. Every $f = (f_k)_{k \in \mathbb{Z}} \in Z_n({\rm Hom}'(X,Y))$ is known as a \underline{map of degree} $n$. We denote the right derived functors of $\overline{{\rm Hom}}(-,-)$ by $\overline{{\rm Ext}}^i(-,-)$. 

\vspace{0.5cm}

\begin{definition} {\rm Recall that if $M$ is a left $R$-module, the \underline{Pontryagin} or \underline{character module} of $M$ is the right $R$-module $M^+ = {\rm Hom}_{\mathbb{Z}}(M, \mathbb{Q / \mathbb{Z}})$. 

Given a chain complex $X = (X_m, \partial^X_m)_{m \in \mathbb{Z}}$ in $\Cadl$, the \underline{Pontryagin} or \underline{character complex} of $X$ is the complex $X^+$ in $\Cadr$ given by $(X^+)_m := {\rm Hom}_{\mathbb{Z}}(X_{-m-1}, \mathbb{Q / Z})$, where the boundary maps are defined by $\partial^{X^+}_m := (-1)^{m-1} {\rm Hom}_{\mathbb{Z}}(\partial^X_{-m}, \mathbb{Q / Z})$.\footnote{We differ from the definition given in \cite[Theorem 4.1.3 {\bf (4)}]{GR}, although we use the same notation $X^+$.}  }
\end{definition}

\vspace{0.5cm}

\begin{definition} {\rm Let $\mathcal{C}$ be an  Abelian category and $C \in {\rm Ob}(\mathcal{C})$. The \underline{$m$th sphere complex centred} \underline{at $C$}, denoted $S^m(C)$, is defined by $(S^m(C))_m := C$ and $(S^m(C))_k := 0$ if $k \neq m$. The \underline{$m$th disk} \underline{complex centred at $C$}, denoted $D^m(C)$, is defined by $(D^m(C))_k := C$ if $k = m$ or $m-1$, and $0$ otherwise, where the boundary map $\partial^{D^m(C)}_m$ is the identity ${\rm id}_C$. }
\end{definition}

\vspace{0.5cm}

\begin{proposition} $X^+ \cong \overline{{\rm Hom}}_{\mathbb{Z}}(X, D^0(\mathbb{Q / Z})$.
\end{proposition}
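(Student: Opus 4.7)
The plan is to unwind both sides degree by degree and exhibit a natural isomorphism of complexes, the only subtle point being a careful bookkeeping of signs.

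First I would compute the $n$th term of $\overline{{\rm Hom}}_{\mathbb{Z}}(X, D^0(\mathbb{Q}/\mathbb{Z}))$ as a set. Since $D^0(\mathbb{Q}/\mathbb{Z})_j$ equals $\mathbb{Q}/\mathbb{Z}$ for $j \in \{0,-1\}$ and vanishes otherwise, the product
\[ {\rm Hom}'(X, D^0(\mathbb{Q}/\mathbb{Z}))_n = \prod_{k \in \mathbb{Z}} {\rm Hom}_{\mathbb{Z}}(X_k, D^0(\mathbb{Q}/\mathbb{Z})_{n+k}) \]
collapses to a pair $(f_{-n}, f_{-n-1})$ with $f_{-n}\colon X_{-n} \to \mathbb{Q}/\mathbb{Z}$ and $f_{-n-1}\colon X_{-n-1} \to \mathbb{Q}/\mathbb{Z}$, all other components being zero.

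Next I would impose the cycle condition defining $\overline{{\rm Hom}}_n = Z_n({\rm Hom}')$. The only nontrivial equation coming from $\partial^{D^0(\mathbb{Q}/\mathbb{Z})}_{k+n}\circ f_k = (-1)^n f_{k-1}\circ \partial^X_k$ occurs at $k = -n$, using that $\partial^{D^0(\mathbb{Q}/\mathbb{Z})}_0 = {\rm id}$; it reads $f_{-n} = (-1)^n f_{-n-1}\circ \partial^X_{-n}$. Every other index gives a vacuous or automatic identity, the latter by $\partial^X\circ\partial^X = 0$. Thus $f_{-n-1}$ can be chosen freely and $f_{-n}$ is forced, which yields the natural bijection
\[ \varphi_n\colon \overline{{\rm Hom}}_{\mathbb{Z}}(X, D^0(\mathbb{Q}/\mathbb{Z}))_n \xrightarrow{\ \cong\ } {\rm Hom}_{\mathbb{Z}}(X_{-n-1}, \mathbb{Q}/\mathbb{Z}) = (X^+)_n, \qquad f \longmapsto f_{-n-1}. \]

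Finally I would check that the $\varphi_n$ assemble into a chain map. Applying $\partial^{\overline{{\rm Hom}}}_n(f)_k = \partial^{D^0(\mathbb{Q}/\mathbb{Z})}_{k+n}\circ f_k$, the only surviving component sits at $k = -n$ and equals $f_{-n}$, which under $\varphi_{n-1}$ corresponds to the element $f_{-n} = (-1)^n f_{-n-1}\circ \partial^X_{-n}$. On the other side $\partial^{X^+}_n(\varphi_n(f)) = (-1)^{n-1} f_{-n-1}\circ \partial^X_{-n}$. The two expressions differ only by the sign $(-1)$, so replacing $\varphi_n$ by $\psi_n := (-1)^{n(n-1)/2}\varphi_n$ (or any analogous consistent twist) produces a genuine isomorphism of complexes.

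The main obstacle is purely the sign bookkeeping: the differentials of $X^+$ and $\overline{{\rm Hom}}$ are defined with slightly different sign conventions, so one must fix a twisting of the levelwise isomorphisms to make them commute with boundaries. Modulo this routine adjustment, the result is a direct unwinding of definitions.
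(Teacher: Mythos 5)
Your proof takes essentially the same route as the paper: collapse ${\rm Hom}'(X,D^0(\mathbb{Q}/\mathbb{Z}))_n$ to the two components at $k=-n,-n-1$, observe that the cycle condition forces $f_{-n}=(-1)^n f_{-n-1}\circ\partial^X_{-n}$ with the remaining condition automatic from $\partial^X\circ\partial^X=0$, and identify the $n$th term with ${\rm Hom}_{\mathbb{Z}}(X_{-n-1},\mathbb{Q}/\mathbb{Z})$. The one slip is the proposed twist: your own computation shows the untwisted maps anticommute with the boundaries in \emph{every} degree, so the correcting signs must satisfy $\epsilon_n=-\epsilon_{n-1}$, i.e.\ $\epsilon_n=(-1)^n$ up to a global sign; the formula $(-1)^{n(n-1)/2}$ has period four ($+,+,-,-$) and fails already at $n=1$, so it does not repair the squares. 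Replacing it by $\psi_n:=(-1)^n\varphi_n$ fixes this, and this is exactly the paper's choice, which defines $\varphi_m(f):=(-1)^m f_{-m-1}$ from the outset so that no after-the-fact twist is needed.
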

\begin{proof} First, notice that
\begin{align*}
{\rm Hom}'(X, D^0(\mathbb{Q / Z}))_m & = {\rm Hom}_{\mathbb{Z}}(X_{-m-1}, \mathbb{Q / Z}) \times {\rm Hom}_{\mathbb{Z}}(X_{-m}, \mathbb{Q / Z}), \\
{\rm Hom}'(X, D^0(\mathbb{Q / Z}))_{m-1} & = {\rm Hom}_{\mathbb{Z}}(X_{-m}, \mathbb{Q / Z}) \times {\rm Hom}_{\mathbb{Z}}(X_{-m+1}, \mathbb{Q / Z}).
\end{align*}
Every $f = (f_k)_{k \in \mathbb{Z}} \in {\rm Hom}(X, D^0(\mathbb{Q / Z}))_m$ has the form $f = (\cdots, 0, f_{-m-1}, f_{-m}, 0, \cdots)$. Now suppose $\partial^{\overline{{\rm Hom}}(X,Y)}_n (f) = 0$. Then we have $\partial^{D^0(\mathbb{Q / Z})}_{k+m} \circ f_k - (-1)^m f_{k-1} \circ \partial^X_k = 0$ for every $k \in \mathbb{Z}$. In particular, $0 = f_{-m} \circ \partial^X_{-m+1}$ and $f_{-m} = (-1)^m f_{-m-1} \circ \partial^X_{-m}$. Define a map \[ \varphi_m : \overline{{\rm Hom}}(X, D^0(\mathbb{Q / Z}))_m \longrightarrow (X^+)_m = {\rm Hom}_{\mathbb{Z}}(X_{-m-1}, \mathbb{Q / Z}) \] by setting $\varphi_m(f) := (-1)^m f_{-m-1}$ for every $m \in \mathbb{Z}$. Using the previous two equalities, it is easy to check that the family of maps $\varphi := (\varphi_m)_{m \in \mathbb{Z}}$ defines an isomorphism of chain complexes $\overline{{\rm Hom}}(X, D^0(\mathbb{Q / Z})) \longrightarrow X^+$.
\end{proof}

The following proposition can be proven by considering long exact sequences of $\overline{{\rm Ext}}^i(-,-)$ and $\overline{{\rm Tor}}_i(-,-)$, and applying the corresponding natural isomorphisms given in \cite[Proposition 4.2.1]{GR}.

\vspace{0.5cm}

\begin{proposition}\label{isomorfos} Let $X$ and $Y$ be two chain complexes over $\Modr$ and $\Modl$, respectively. For every $i > 0$, we have the following isomorphisms of complexes:
\begin{itemize}
\item[ {\bf (1)}] $\overline{\rm Ext}^i(X, Y^+) \cong \overline{{\rm Tor}}_i(X, Y)^+$.

\item[ {\bf (2)}] $\overline{{\rm Tor}}_i(X, Y) \cong \overline{{\rm Tor}}_i(X, Y)$, provided $R$ is a commutative ring.  
\end{itemize} 
\end{proposition}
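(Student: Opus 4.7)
The plan is to reduce both isomorphisms to their degree-zero analogues and then propagate them to higher degrees via resolutions. For part (1), the starting point is the natural isomorphism of complexes $\overline{{\rm Hom}}(X, Y^+) \cong (X \overline{\otimes} Y)^+$ in $\Cadl$. This follows from combining the preceding proposition, which identifies $X^+$ with $\overline{{\rm Hom}}_{\mathbb{Z}}(X, D^0(\mathbb{Q}/\mathbb{Z}))$, and the bar-tensor/bar-Hom adjunction recorded in \cite[Proposition 4.2.1]{GR}. Crucially, the character functor $(-)^+ = {\rm Hom}_{\mathbb{Z}}(-, \mathbb{Q}/\mathbb{Z})$ is exact, since $\mathbb{Q}/\mathbb{Z}$ is an injective cogenerator of $\mathbb{Z}$-modules, and therefore commutes with taking homology.

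Next I would choose a resolution $P_\bullet \longrightarrow X$ by complexes which are flat for $\overline{\otimes}$ (so that $\overline{{\rm Tor}}_i(P_j, -) = 0$ for all $i \geq 1$); such resolutions are available in the flat model structure on $\Cadr$ constructed by Gillespie in \cite{Gil}. Applying the degree-zero adjunction termwise produces a chain isomorphism $\overline{{\rm Hom}}(P_\bullet, Y^+) \cong (P_\bullet \overline{\otimes} Y)^+$ of double complexes. The $i$-th cohomology of the left-hand side computes $\overline{{\rm Ext}}^i(X, Y^+)$; on the right, exactness of $(-)^+$ allows one to interchange it with homology, giving $H_i(P_\bullet \overline{\otimes} Y)^+ = \overline{{\rm Tor}}_i(X, Y)^+$. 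Equivalently, one can argue by induction on $i$: for a short exact sequence $0 \to K \to F \to X \to 0$ with $F$ bar-flat, the two long exact sequences induced by $\overline{{\rm Ext}}^*(-, Y^+)$ and $\overline{{\rm Tor}}_*(-, Y)^+$ are intertwined by the degree-zero isomorphism, so the result follows by a standard dimension shift.

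For part (2), reading the stated isomorphism as commutativity $\overline{{\rm Tor}}_i(X, Y) \cong \overline{{\rm Tor}}_i(Y, X)$ when $R$ is commutative, the strategy is parallel. The Koszul twist $x \otimes y \mapsto (-1)^{|x|\,|y|}\, y \otimes x$ on the standard tensor product is $R$-bilinear in the commutative case and is compatible with the subcomplexes $B_n(X \otimes Y)$ used to define the bar-tensor, hence descends to a natural isomorphism $X \overline{\otimes} Y \cong Y \overline{\otimes} X$ of complexes. Taking a bar-flat resolution of either variable and applying this twist termwise then transports to the derived functors, giving the required isomorphism on $\overline{{\rm Tor}}_i$.

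The main obstacle I anticipate is the careful bookkeeping of Koszul signs: one must verify that the degree-zero natural isomorphism $\overline{{\rm Hom}}(X, Y^+) \cong (X \overline{\otimes} Y)^+$ really is a chain isomorphism, and that the adjunction of \cite[Proposition 4.2.1]{GR} is sufficiently natural in both arguments to commute with the connecting homomorphisms of the two long exact sequences used in the induction. Once this naturality is pinned down at the level of $\overline{{\rm Hom}}$ and $\overline{\otimes}$, the dimension-shifting step is routine and both parts follow simultaneously.
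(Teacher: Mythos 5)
Your main route---reduce to the degree-zero natural isomorphism recorded in [GR, Proposition 4.2.1] and propagate to higher $i$ by the long exact sequences of $\overline{\rm Ext}^*$ and $\overline{\rm Tor}_*$---is precisely what the paper does: the paper's entire proof is a single sentence invoking those long exact sequences and that reference. Your derivation of the base case of part {\bf (1)}, combining the preceding identification $X^+\cong\overline{{\rm Hom}}_{\mathbb{Z}}(X,D^0(\mathbb{Q}/\mathbb{Z}))$ with the $\overline{\otimes}$--$\overline{{\rm Hom}}$ adjunction and the exactness of $(-)^+$, is the right way to unpack what [GR, Proposition 4.2.1] supplies, and the dimension-shifting induction is sound.

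For part {\bf (2)}, however, the explicit Koszul-twist step fails as stated. The usual twist $\tau(x\otimes y)=(-1)^{|x||y|}\,y\otimes x$ is a chain map of the ordinary tensor complexes, hence carries $B_n(X\otimes Y)$ to $B_n(Y\otimes X)$ and does descend to a degreewise bijection $X\overline{\otimes}Y\to Y\overline{\otimes}X$; but the descended map does \emph{not} commute with the bar-tensor differential $\bar{\partial}(\overline{x\otimes y})=\overline{\partial^X(x)\otimes y}$. Using the relation $\overline{\partial^Y(y)\otimes x}=(-1)^{|y|+1}\,\overline{y\otimes\partial^X(x)}$ in $Y\overline{\otimes}X$ (coming from the fact that $\partial^{Y\otimes X}(y\otimes x)$ dies in the quotient), one finds $\bar{\partial}\circ\bar{\tau}=-\,\bar{\tau}\circ\bar{\partial}$, i.e.\ the descended twist is an anti-chain map rather than a chain map. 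This is repairable---postcompose with the degreewise sign $(-1)^n$, equivalently replace the exponent $|x||y|$ by $|x||y|+|x|+|y|$---but as written the claimed chain isomorphism $X\overline{\otimes}Y\cong Y\overline{\otimes}X$ does not hold. You correctly flag sign bookkeeping as the main risk; here it is an actual obstruction rather than a routine check, and the paper avoids it entirely by taking [GR, Proposition 4.2.1] as a black box for $i=0$ and then dimension shifting.
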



\section{Gorenstein-projective and Gorenstein-injective objects in Gorenstein categories}

In this section we study the concepts of Gorenstein-projective and Gorenstein-injective objects in Gorenstein categories, which are a sort of generalization of the category of modules over a Gorenstein ring. We start recalling the notions of left and right dimensions with respect to a class of objects in an Abelian category. 

\vspace{0.5cm}

\begin{definition} {\rm Given a class of objects $\mathcal{F}$ of an Abelian category $\mathcal{C}$, we shall say that an object $X$ is a \underline{left $n$-$\mathcal{F}$-object} if there is an exact sequence \[ 0 \longrightarrow F_n \longrightarrow F_{n-1} \longrightarrow \cdots \longrightarrow F_1 \longrightarrow F_0 \longrightarrow X \longrightarrow 0, \] where $F_i \in \mathcal{F}$ for every $0 \leq i \leq n$. If $n$ is the smallest integer for which such a sequence exists, then we say that $X$ has \underline{left $\mathcal{F}$-dimension} equal to $n$. If such an integer $n$ does not exist, we shall say instead that $X$ has infinite left $\mathcal{F}$-dimension. Dually, one defines \underline{right $n$-$\mathcal{F}$-objects} and \underline{right $\mathcal{F}$-dimensions}. }
\end{definition} 

\newpage

Let $\mathcal{P}_n(\mathcal{C})$ and $\mathcal{I}_n(\mathcal{C})$ denote the classes of (left) $n$-projective and (right) $n$-injective objects of $\mathcal{C}$, respectively. For the category $\Modl$, $\mathcal{F}_n$ shall denote the class of $n$-flat  modules. For simplicity, we shall write $\mathcal{P}_n(\Modl) = \mathcal{P}_n$ and $\mathcal{I}_n(\Modl) = \mathcal{I}_n$. We shall denote by ${\rm pd}(X)$, ${\rm id}(X)$ and ${\rm fd}(X)$ the projective, injective and flat dimension of an object $X$ (say a module or a chain complex), respectively. \\

\begin{definition}[See {\cite[Definition 2.18]{Estrada}}] \label{defGor} {\rm A Grothendieck category $\mathcal{C}$ is said to be a \underline{Gorenstein cate}- \underline{gory} if the following conditions are satisfied: 
\begin{itemize}
\item[ {\bf (1)}] For any object $X$ of $\mathcal{C}$, ${\rm pd}(X) < \infty$ if, and only if, ${\rm id}(X) < \infty$. 

\item[ {\bf (2)}] $FDP(\mathcal{C}) := {\rm sup}\{ {\rm pd}(X) \mbox{}:\mbox{} {\rm pd}(X) < \infty \} < \infty$ and $FDI(\mathcal{C}) := {\rm sup}\{ {\rm id}(X) \mbox{}:\mbox{} {\rm id}(X) < \infty \} < \infty$. 

\item[ {\bf (3)}] $\mathcal{C}$ has a generator of finite projective dimension. 
\end{itemize} }
\end{definition}

Let $\mathcal{C}$ be a Gorenstein category and $\mathcal{W}(\mathcal{C})$ denote the class of objects of $\mathcal{C}$ with finite projective dimension. Notice that $\mathcal{W}(\mathcal{C})$ is thick. In the case $\mathcal{C} = \Modl$, we shall write $\mathcal{W}(\Modl) = \mathcal{W}$. \\

\begin{remark} {\rm Every Gorenstein category has enough projective objects. For let $X \in {\rm Ob}(\mathcal{C})$ and $G$ be a generator of $\mathcal{C}$ of finite projective dimension, say $n$. Then by {\rm \cite[Lemma 2, Section 2.10]{Pareigis}} there exists an epimorphism $G^{(I)} \twoheadrightarrow X$, where $G^{(I)}$ denotes the direct sum of copies of $G$ over the set $I = {\rm Hom}_{\mathcal{C}}(G,X)$. On the other hand, there exists a projective object $P$ and an epimorphism $P \twoheadrightarrow G$. Then we get an epimorphism $P^{(I)} \twoheadrightarrow X$, where $P^{(I)}$ is a projective object. It follows by {\rm \cite[Lemma 2, Section 2.10]{Pareigis}} again, that $P$ is a projective generator of $\mathcal{C}$, so condition {\bf (3)} in the previous definition can be replaced by {\bf (3')}: $\mathcal{C}$ has a projective generator. }
\end{remark} 

\vspace{0.5cm}

We recall that a ring $R$ (not necessarily commutative) is called a \underline{Gorenstein ring} if it is both left and right Noetherian, and it has finite injective dimension as either a left or right $R$-module. It can be shown that over such a ring, the injective dimensions of $R$ as a left and right $R$-module coincide to some nonnegative integer, say $n$. Moreover, if $R$ is a Gorenstein ring, then conditions ${\rm pd}(M) \leq n$, ${\rm pd}(M) < \infty$, ${\rm id}(M) \leq n$, ${\rm id}(M) < \infty$, ${\rm fd}(M) \leq n$ and ${\rm fd}(M) < \infty$ are equivalent, for every (left or right) $R$-module $M$. This fact was proven by Y. Iwanaga, and it is after him that Gorenstein rings are also known as \underline{$n$-Iwanaga-Gorenstein rings}. The reader can see the details in \cite[Theorem 9.1.10]{EJ}. It follows by the previous comments that if $R$ is a Gorenstein ring, then $\Modl$ is a Gorenstein category. We give two more examples of Gorenstein categories below. 

\newpage

\begin{example} {\rm \
\begin{itemize}
\item[{\bf (1)}] Recall that a ring $R$ is called a \underline{quasi-Frobenius ring} if the classes of projective and injective left (or right) $R$-modules coincide. Every quasi-Frobenius ring $R$ is a $0$-Gorenstein ring, and so $\Modl$ is a Gorenstein category. 

\item[{\bf (2)}] Consider the category $\Modl$ where $R$ is a field $\mathbb{K}$. So $\Modl = {\rm {\bf Vect}}_{\mathbb{K}}$ is the category of $\mathbb{K}$-vector spaces. Since every vector space is projective and injective, $\mathbb{K}$ is a quasi-Frobenius ring and ${\rm {\bf Vect}}_{\mathbb{K}}$ is a Gorenstein category. 
\end{itemize} }
\end{example}

\vspace{0.25cm}

Before giving another example of a Gorenstein category, we recall from \cite[Definition 3.3]{Gil} that if $\mathcal{A}$ is a class of objects in an abelian category $\mathcal{C}$, then a complex $X$ in $\Complexes$ (the category of chain complexes over $\mathcal{C}$) is said to be an \underline{$\mathcal{A}$-complex} if $X$ is exact and $Z_m(X) \in \mathcal{A}$ for every $m \in \mathbb{Z}$. The class of $\mathcal{A}$-complexes is denoted by $\widetilde{\mathcal{A}}$.  In \cite[Proposition 3.1]{Perez}, it is proven that $\widetilde{\mathcal{I}_n(\mathcal{C})} = \mathcal{I}_n(\Complexes)$ if $\mathcal{C}$ has enough injectives. Dually, $\widetilde{\mathcal{P}_n(\mathcal{C})} = \mathcal{P}_n(\Complexes)$ if $\mathcal{C}$ has enough projectives. Moreover, $\widetilde{\mathcal{F}_n}$ is the class of (left) $n$-flat complexes. 

\vspace{0.5cm}

\begin{example} {\rm If $\mathcal{C}$ is a Gorenstein category, then so is $\Complexes$. For conditions {\bf (1)} and {\bf (2)} of Definition \ref{defGor}, it suffices to notice that if $X$ is a complex with finite projective dimension, then $X \in \widetilde{\mathcal{P}_n(\mathcal{C})}$, for $n = FDP(\mathcal{C})$. To check condition {\bf (3')}, if $G$ is a projective generator of $\mathcal{C}$, then notice that $\bigoplus_{m \in \mathbb{Z}} D^m(G)$ is a projective generator of $\Complexes$. In particular, ${\rm Ch}(\Modl)$ is a Gorenstein category if $R$ is a Gorenstein ring. }
\end{example}

\vspace{0.25cm}

Now we recall the definition of Gorenstein-projective and Gorenstein-injective objects. Later in Section 5, we shall recall the notions of Gorenstein-flat modules and complexes. 

Given an Abelian category $\mathcal{C}$ and a class $\mathcal{F}$ of objects of $\mathcal{C}$, we recall from \cite[Definition 8.1.1]{EJ} that a chain complex $X$ over $\mathcal{C}$ is \underline{${\rm Hom}_{\mathcal{C}}(\mathcal{F},-)$-exact} if the complex \[ {\rm Hom}_{\mathcal{C}}(F,X) = \cdots \longrightarrow {\rm Hom}_{\mathcal{C}}(F,X_1) \longrightarrow {\rm Hom}_{\mathcal{C}}(F,X_0) \longrightarrow {\rm Hom}_{\mathcal{C}}(F,X_{-1}) \longrightarrow \cdots \] is exact, for every $F \in \mathcal{F}$. The notion of \underline{${\rm Hom}_{\mathcal{C}}(-,\mathcal{F})$-exact} complexes is dual. 

\vspace{0.5cm}

\begin{definition}[See {\cite[Definition 2.19]{Estrada}}] {\rm An object $X$ of an Abelian category $\mathcal{C}$ is \underline{Gorenstein}-\underline{projective} if there exists an exact sequence $\cdots \longrightarrow P_1 \longrightarrow P_0 \longrightarrow P^0 \longrightarrow P^1 \longrightarrow \cdots$ of projective objects which is ${\rm Hom}_{\mathcal{C}}(-, \mathcal{P}_0(\mathcal{C}))$-exact and such that $X = {\rm Ker}(P^0 \longrightarrow P^1)$. Dually, $X$ is \underline{Gorenstein-injective} if there exists an exact sequence $\cdots \longrightarrow I_1 \longrightarrow I_0 \longrightarrow I^0 \longrightarrow I^1 \longrightarrow \cdots$ of injective objects which is ${\rm Hom}_{\mathcal{C}}(\mathcal{I}_0(\mathcal{C}),-)$-exact and such that $X = {\rm Ker}(I^0 \longrightarrow I^1)$. We denote by $\mathcal{GP}_0(\mathcal{C})$ and $\mathcal{GI}_0(\mathcal{C})$ the classes of  Gorenstein-projective and Gorenstein-injective objects, respectively. }
\end{definition}

\vspace{0.5cm}

In \cite[Theorems 2.24 and 2.25]{Estrada}, it is proven that $\mathcal{W}(\mathcal{C})$ is the right and left half of two complete cotorsion pairs $(\mathcal{GP}_0(\mathcal{C}), \mathcal{W}(\mathcal{C}))$ and $(\mathcal{W}(\mathcal{C}), \mathcal{GI}_0(\mathcal{C}))$ in a Gorenstein category $\mathcal{C}$. The proofs given there consist in constructing for every $X \in {\rm Ob}(\mathcal{C})$ an exact sequence $0 \longrightarrow W \longrightarrow C \longrightarrow X \longrightarrow 0$ with $C \in \mathcal{GP}_0(\mathcal{C})$ and $W \in \mathcal{W}(\mathcal{C})$, and in showing that $(\mathcal{W}, \mathcal{GI}_0(\mathcal{C}))$ is a small cotorsion pair. Moreover, cogenerating sets for these pairs were provided by M. Hovey in the case $\mathcal{C} = \Modl$, with $R$ an $n$-Gorenstein ring. On the one hand, if $\mathcal{T}$ denotes the set of all $n$-syzygies in $\Omega^n(R / I)$, where $I$ runs over the set of left ideals of $R$, then $\mathcal{T}$ cogenerates $(\mathcal{GP}_0,\mathcal{W})$ (See \cite[Theorem 8.3]{Hovey}). On the other hand, if $\mathcal{S}$ denotes the set of all modules $S \in \Omega^i(J)$, where $i \geq 0$ and $J$ runs over the set of indecomposable injective modules, then $\mathcal{S}$ cogenerates $(\mathcal{W}, \mathcal{GI}_0)$ (See \cite[Theorem 8.4]{Hovey}). These two results can be stated for any Gorenstein category $\mathcal{C}$ satisfying certain conditions. Let $G$ be a (projective) generator of $G$. By \cite[Lemme 1, page 136]{Grothendieck}, an object $Y$ in $\mathcal{C}$ is $m$-injective if, and only if, ${\rm Ext}^{m+1}_{\mathcal{C}}(G/J,Y) = 0$ for every sub-object $J$ of $G$. Notice also that there exists a nonnegative integer $N$ such that $\mathcal{W}(\mathcal{C}) = \mathcal{I}_N(\mathcal{C})$. On the other hand, $\mathcal{GP}_0(\mathcal{C}) = \mbox{}^\perp (\mathcal{W}(\mathcal{C}))$ by \cite[Theorem 2.25]{Estrada}. Hence $(\mathcal{GP}_0(\mathcal{C}), \mathcal{W}(\mathcal{C})) = (\mbox{}^\perp(\mathcal{I}_N(\mathcal{C})),\mathcal{I}_N(\mathcal{C}))$. Therefore, we obtain the following result. 

\vspace{0.5cm}

\begin{proposition}[M. Hovey, {\cite[Theorem 8.3]{Hovey}} for Gorenstein categories] If $\mathcal{C}$ is a Gorenstein category with a generator $G$, then the pair $(\mathcal{GP}_0(\mathcal{C}), \mathcal{W}(\mathcal{C}))$ is cogenerated by the set $\mathcal{T}$ of all $n$-syzygies in $\Omega^n(G/J)$, where $J$ runs over the set of sub-objects of $G$. 
\end{proposition}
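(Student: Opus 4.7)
The plan is to verify directly the identity $\mathcal{T}^\perp = \mathcal{W}(\mathcal{C})$, which is exactly what the definition of ``cogenerated by $\mathcal{T}$'' demands; the containment $\mathcal{T} \subseteq \mathcal{GP}_0(\mathcal{C})$ then comes for free, because $\mathcal{GP}_0(\mathcal{C}) = \mbox{}^\perp(\mathcal{W}(\mathcal{C})) = \mbox{}^\perp(\mathcal{T}^\perp) \supseteq \mathcal{T}$. Setting $n := FDI(\mathcal{C})$, the discussion immediately preceding the statement already records the equality $\mathcal{W}(\mathcal{C}) = \mathcal{I}_n(\mathcal{C})$, and the whole argument hinges on combining two tools the paper has assembled: Grothendieck's criterion, which asserts that $Y \in \mathcal{I}_n(\mathcal{C})$ if and only if ${\rm Ext}^{n+1}_{\mathcal{C}}(G/J, Y) = 0$ for every sub-object $J$ of $G$; and the standard dimension-shift isomorphism ${\rm Ext}^1_{\mathcal{C}}(S, Y) \cong {\rm Ext}^{n+1}_{\mathcal{C}}(G/J, Y)$ whenever $S$ is an $n$-syzygy of $G/J$, arising from the vanishing ${\rm Ext}^i_{\mathcal{C}}(P_j, Y) = 0$ for $i \geq 1$ along any projective resolution of $G/J$.

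For the inclusion $\mathcal{W}(\mathcal{C}) \subseteq \mathcal{T}^\perp$ I would fix $Y \in \mathcal{I}_n(\mathcal{C})$ and $S \in \mathcal{T}$ and simply compose these two ingredients: the dimension shift reduces ${\rm Ext}^1_{\mathcal{C}}(S, Y)$ to ${\rm Ext}^{n+1}_{\mathcal{C}}(G/J, Y)$, which Grothendieck's criterion forces to vanish. For the converse $\mathcal{T}^\perp \subseteq \mathcal{W}(\mathcal{C})$ I would run exactly the same calculation backwards: given $Y \in \mathcal{T}^\perp$, for each sub-object $J \subseteq G$ pick some $n$-syzygy $S_J \in \Omega^n(G/J) \subseteq \mathcal{T}$; the vanishing of ${\rm Ext}^1_{\mathcal{C}}(S_J, Y)$ combined with the dimension shift supplies precisely the hypothesis of Grothendieck's criterion, so $Y \in \mathcal{I}_n(\mathcal{C}) = \mathcal{W}(\mathcal{C})$.

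The one point I expect to require care is the set-theoretic check that $\mathcal{T}$ is genuinely a set rather than a proper class. Since $\mathcal{C}$ is Grothendieck, the sub-objects of the fixed generator $G$ form a set, so the quotients $G/J$ form a set of isomorphism types. For each type I can fix one projective resolution (using the projective generator supplied by the remark following Definition \ref{defGor}) and retain only its $n$-syzygy; Schanuel's lemma ensures that every other $n$-syzygy of $G/J$ differs from this chosen representative by a projective summand, which is invisible to ${\rm Ext}^1_{\mathcal{C}}(-, Y)$, so restricting to these representatives loses nothing. Beyond this bookkeeping I do not anticipate further obstacles, since the homological content is essentially Hovey's original argument transported verbatim via Grothendieck's lemma.
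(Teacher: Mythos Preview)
Your proposal is correct and follows essentially the same route as the paper: the paper's proof is the paragraph immediately preceding the proposition, which records exactly the two ingredients you name (Grothendieck's criterion that $Y \in \mathcal{I}_N(\mathcal{C})$ iff ${\rm Ext}^{N+1}_{\mathcal{C}}(G/J,Y) = 0$ for all sub-objects $J$, and the identification $\mathcal{W}(\mathcal{C}) = \mathcal{I}_N(\mathcal{C})$ together with $\mathcal{GP}_0(\mathcal{C}) = \mbox{}^\perp\mathcal{W}(\mathcal{C})$) and then simply states the result. Your write-up is more explicit than the paper's in spelling out the dimension-shift isomorphism and in handling the set-theoretic point about $\mathcal{T}$, but the underlying argument is the same.
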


\vspace{0.5cm}

\begin{remark} {\rm Let $R$ be an $n$-Gorenstein ring. By \cite[Proposition 3.1]{Perez}, notice that $\widetilde{\mathcal{W}} = \mathcal{W}({\rm {\bf Ch}}(\mathcal{C}))$. Denote $\widehat{\mathcal{GP}_0} = \mathcal{GP}_0(\Cadl)$. In {\rm \cite{GR}}, it is proven that $(\widehat{\mathcal{GP}_0}, \widetilde{W})$ is a complete cotorsion pair. This proof is direct in the sense that for every complex $X$ it is constructed a short exact sequence $0 \longrightarrow W \longrightarrow C \longrightarrow X \longrightarrow 0$ where $C$ is a Gorenstein-projective complex and $W \in \widetilde{\mathcal{W}}$. We can give another cogenerating set for this pair, namely \[ \mathcal{X} = \{ X \in \Omega^n(S^m(R / I)) \mbox{ : $m \in \mathbb{Z}$ and $I$ is a left ideal of $R$} \} \cup \{ \Sigma^k(S^0(R)) \mbox{ : } k \in \mathbb{Z} \}. \] The equality $\widetilde{\mathcal{W}} = \mathcal{X}^\perp$ follows by \cite[Lemma 4.2]{Gillespie} and \cite[Lemma 5.1]{Rada}. }
\end{remark}

\vspace{0.5cm}

To show that \cite[Theorem 8.4]{Hovey} is also valid for Gorenstein categories $\mathcal{C}$, we impose an additional condition on $\mathcal{C}$. Namely, we need $\mathcal{C}$ to be \underline{locally Noetherian}, i.e. $\mathcal{C}$ has a Noetherian generator $N$. Since $\mathcal{C}$ is a Grothendieck category, the class ${\rm sub}(N)$ of sub-objects of $N$ is actually a set by \cite[Lemma 1, page 111]{Pareigis}. So $N$ is \underline{Noetherian} if each nonempty subset $\mathcal{U}$ of ${\rm sub}(N)$ has a maximal sub-object, where a sub-object $Y \in \mathcal{U}$ is said to be \underline{maximal} in $\mathcal{U}$ if $Y' \in \mathcal{U}$ and $Y \subseteq Y'$ always imply $Y = Y'$. 

An injective object $X$ in a Grothendieck category $\mathcal{C}$ is said to be \underline{indecomposable} if $X \neq 0$ and if, for each decomposition $X = X_1 \oplus X_2$ into a direct sum of injective objects, either $X = X_1$ or $X = X_2$. If $X$ is not indecomposable, then it is said to be \underline{decomposable}. 

\vspace{0.5cm}

\begin{remark} {\rm A theorem due to E. Matlis (see \cite[Theorem 4, Section 4.10]{Pareigis}) states that if $\mathcal{C}$ is a locally Noetherian Grothendieck category, then each injective object $I$ in $\mathcal{C}$ may be decomposed into a coproduct of indecomposable injective objects $I = \bigoplus_{\alpha \in A} I_\alpha$. If $R$ is a Noetherian ring (and so $\Modl$ is locally Noetherian), we can obtain decompositions as above of injective chain complexes in terms of indecomposable injective left $R$-modules. Let $I$ be an injective chain complex. Then we can write $I \cong \bigoplus_{m \in \mathbb{Z}} D^{m+1}(Z_{m}(I))$. For every $m \in \mathbb{Z}$, $Z_{m}(I) \cong \bigoplus_{\alpha_m \in \Lambda_m} J_{\alpha_m}$, where $J_{\alpha_m}$ is an indecomposable injective module. Hence $D^{m+1}(Z_m(I)) \cong \bigoplus_{\alpha_m \in \Lambda_m} D^{m+1}(J_{\alpha_m})$, and so we have 
\begin{align*}
I & \cong \bigoplus_{m \in \mathbb{Z}} \left( \bigoplus_{\alpha_m \in \Lambda_m} D^{m+1}(J_{\alpha_m}) \right) = \bigoplus \left\{ D^{m+1}(J_{\alpha_m}) \mbox{ : } (\alpha_{m+k})_{k \in \mathbb{Z}} \in \bigcup_{m \in \mathbb{Z}} \left( \prod_{k \in \mathbb{Z}} \Lambda_{m+k} \right) \right\}, 
\end{align*}
where each $D^{m+1}(J_{\alpha_m})$ is an indecomposable injective complex. Notice that a complex $J$ is an indecomposable injective if, and only if, $J$ is the disk complex of an indecomposable injective module. 

In a similar way, one can show that these results hold for the category of complexes over a locally Noetherian Grothendieck category. }
\end{remark}

\vspace{0.5cm}

In the proof of Theorem \cite[Theorem 8.4]{Hovey}, M. Hovey uses a corollary of the Eklof and Trlijaf Theorem given in \cite[Corollary 3.2.3]{Gobel}. The proof appearing there carry over to any Grothendieck category. Thanks to this fact and the equality $\mathcal{GI}_0(\mathcal{C}) = (\mathcal{W}(\mathcal{C}))^\perp$ proven in \cite[Theorem 2.24]{Estrada}, Hovey's arguments can be adapted to the context of locally Noetherian Gorenstein categories. 

\vspace{0.5cm}

\begin{proposition}[M. Hovey, {\cite[Theorem 8.4]{Hovey}} for Gorenstein categories]\label{hoveygorinjcat} Let $\mathcal{C}$ be a locally Noetherian Gorenstein category. Let $\mathcal{S}$ denote the set of all objects $S \in \Omega^i(J)$, where $i \geq 0$ and $J$ runs over the set of indecomposable injective objects of $\mathcal{C}$. Then $\mathcal{S}$ cogenerates a cotorsion pair $(\mathcal{W}(\mathcal{C}), \mathcal{GI}_0(\mathcal{C}))$. 
\end{proposition}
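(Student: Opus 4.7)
The plan is to establish $\mathcal{S}^\perp = \mathcal{GI}_0(\mathcal{C})$, after which the Eklof--Trlifaj theorem in the form of \cite[Corollary 3.2.3]{Gobel} (valid in every Grothendieck category) produces the complete cotorsion pair $(\mbox{}^\perp(\mathcal{S}^\perp), \mathcal{S}^\perp)$ cogenerated by the set $\mathcal{S}$, whose left half is identified with $\mathcal{W}(\mathcal{C})$ via the equality $\mathcal{GI}_0(\mathcal{C}) = (\mathcal{W}(\mathcal{C}))^\perp$ proven in \cite[Theorem 2.24]{Estrada}. The inclusion $\mathcal{GI}_0(\mathcal{C}) \subseteq \mathcal{S}^\perp$ is immediate: every indecomposable injective $J$ of $\mathcal{C}$ satisfies ${\rm id}(J) = 0$, hence ${\rm pd}(J) < \infty$ by condition {\bf (1)} of Definition \ref{defGor}, so $J \in \mathcal{W}(\mathcal{C})$; thickness of $\mathcal{W}(\mathcal{C})$ then forces each syzygy $\Omega^i(J)$ to lie in $\mathcal{W}(\mathcal{C})$, giving $\mathcal{S} \subseteq \mathcal{W}(\mathcal{C})$ and $(\mathcal{W}(\mathcal{C}))^\perp \subseteq \mathcal{S}^\perp$.

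For the reverse inclusion, I would fix $Y \in \mathcal{S}^\perp$ and proceed by dimension shifting. The standard isomorphisms ${\rm Ext}^1_{\mathcal{C}}(\Omega^i(J), Y) \cong {\rm Ext}^{i+1}_{\mathcal{C}}(J, Y)$ translate the defining vanishing condition into ${\rm Ext}^k_{\mathcal{C}}(J, Y) = 0$ for every $k \geq 1$ and every indecomposable injective $J$. Matlis' decomposition, available in the locally Noetherian setting, allows one to write any injective object $I$ as $I \cong \bigoplus_{\alpha} J_\alpha$ with each $J_\alpha$ indecomposable; combined with the natural isomorphism ${\rm Ext}^k_{\mathcal{C}}(\bigoplus_\alpha J_\alpha, Y) \cong \prod_\alpha {\rm Ext}^k_{\mathcal{C}}(J_\alpha, Y)$ (obtained by building a projective resolution of $\bigoplus_\alpha J_\alpha$ as a coproduct of projective resolutions and using that products of complexes of abelian groups commute with cohomology), this promotes the vanishing to all injective objects of $\mathcal{C}$. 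Finally, for any $W \in \mathcal{W}(\mathcal{C})$, condition {\bf (1)} of Definition \ref{defGor} yields a finite injective coresolution $0 \to W \to I^0 \to \cdots \to I^n \to 0$; decomposing it into short exact sequences and iterating the long exact sequence of ${\rm Ext}^{*}_{\mathcal{C}}(-, Y)$ with the previously established vanishing delivers ${\rm Ext}^1_{\mathcal{C}}(W, Y) = 0$, so $Y \in (\mathcal{W}(\mathcal{C}))^\perp = \mathcal{GI}_0(\mathcal{C})$.

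The principal obstacle is set-theoretic rather than homological: for the Eklof--Trlifaj machinery to apply, one must verify that the isoclasses of indecomposable injectives of $\mathcal{C}$ form a genuine set, so that $\mathcal{S}$ is a set. In a locally Noetherian Grothendieck category this holds because every indecomposable injective appears as the injective envelope of a sub-object of the Noetherian generator $N$, and the sub-objects of $N$ form a set by \cite[Lemma 1, page 111]{Pareigis}; choosing syzygies within a fixed projective resolution of each such $J$ then produces an honest set $\mathcal{S}$. Once this set-theoretic point is secured, the homological steps above are essentially Hovey's original argument transported verbatim to the Gorenstein-category setting.
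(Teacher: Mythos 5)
Your proposal is correct and is essentially the proof the paper intends (the paper only sketches it, asserting that Hovey's argument for \cite[Theorem 8.4]{Hovey} adapts): you spell out exactly that adaptation, namely $\mathcal{S}\subseteq\mathcal{W}(\mathcal{C})$ by thickness, dimension shifting over syzygies, Matlis' decomposition combined with ${\rm Ext}^k_{\mathcal{C}}\bigl(\bigoplus_\alpha J_\alpha, Y\bigr)\cong\prod_\alpha {\rm Ext}^k_{\mathcal{C}}(J_\alpha,Y)$, finite injective coresolutions of objects of $\mathcal{W}(\mathcal{C})$, and the cotorsion pair of \cite[Theorem 2.24]{Estrada} (note that identifying the left half as $\mathcal{W}(\mathcal{C})$ really uses the companion equality $\mathcal{W}(\mathcal{C})=\mbox{}^\perp(\mathcal{GI}_0(\mathcal{C}))$, which that theorem also provides). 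The one slip is in your set-theoretic aside: an indecomposable injective $J$ is the injective envelope of a nonzero \emph{quotient} $N/K$ of the Noetherian generator (the image of a nonzero morphism $N\to J$), not of a sub-object of $N$ --- over $\mathbb{Z}$, for instance, $\mathbb{Z}(p^\infty)=E(\mathbb{Z}/p)$ is not the envelope of any sub-object of $\mathbb{Z}$ --- but since such quotients are parametrized by sub-objects of $N$ via their kernels, the conclusion that the isomorphism classes of indecomposable injectives form a set is unaffected.
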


\vspace{0.5cm}

In every Abelian category $\mathcal{C}$ with enough projective and injective objects, the cotorsion pairs $(\mathcal{P}_0(\mathcal{C}), \mathcal{C})$ and $(\mathcal{C}, \mathcal{I}_0(\mathcal{C}))$ are complete. This occurs in any Gorenstein category. Moreover, the equalities $\mathcal{GP}_0 \cap \mathcal{W} = \mathcal{P}_0$ and $\mathcal{GI}_0 \cap \mathcal{W} = \mathcal{I}_0$ proven by Hovey in \cite[Corollary 8.5]{Hovey} also hold for such a category. So by Hovey's correspondence and the previous results, we have the following model structures.

\vspace{0.5cm}

\begin{theorem}[M. Hovey] Let $\mathcal{C}$ be a Gorenstein category. There is a unique Abelian model structure on $\mathcal{C}$, where $\mathcal{GP}_0(\mathcal{C})$, ${\rm Ob}(\mathcal{C})$ and $\mathcal{W}(\mathcal{C})$ are the classes of cofibrant, fibrant, and trivial objects, respectively. Dually, there is a unique Abelian model structure on $\mathcal{C}$, with the same trivial objects, such that ${\rm Ob}(\mathcal{C})$ and $\mathcal{GI}_0(\mathcal{C})$ are the classes of cofibrant and fibrant objects, respectively. 
\end{theorem}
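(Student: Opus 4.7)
The plan is to produce the first model structure by feeding two compatible complete cotorsion pairs into Hovey's correspondence, taking $\mathcal{D}=\mathcal{W}(\mathcal{C})$ as the class of trivial objects. The natural candidates are
$(\mathcal{A},\mathcal{B}\cap\mathcal{D})=(\mathcal{GP}_0(\mathcal{C}),\mathcal{W}(\mathcal{C}))$ and $(\mathcal{A}\cap\mathcal{D},\mathcal{B})=(\mathcal{P}_0(\mathcal{C}),{\rm Ob}(\mathcal{C}))$. First I would invoke \cite[Theorem 2.25]{Estrada}, quoted just above the statement, to assert that $(\mathcal{GP}_0(\mathcal{C}),\mathcal{W}(\mathcal{C}))$ is a complete cotorsion pair in any Gorenstein category. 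The trivial pair $(\mathcal{P}_0(\mathcal{C}),{\rm Ob}(\mathcal{C}))$ is complete because $\mathcal{C}$ has enough projective objects, which follows from the remark after Definition~3.3 on projective generators.

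Next I would verify the compatibility hypothesis, which reduces to the identities $\mathcal{GP}_0(\mathcal{C})\cap\mathcal{W}(\mathcal{C})=\mathcal{P}_0(\mathcal{C})$ and ${\rm Ob}(\mathcal{C})\cap\mathcal{W}(\mathcal{C})=\mathcal{W}(\mathcal{C})$. The second is obvious, while the first is Hovey's \cite[Corollary 8.5]{Hovey} transcribed to the Gorenstein-categorical setting. The inclusion $\mathcal{P}_0(\mathcal{C})\subseteq\mathcal{GP}_0(\mathcal{C})\cap\mathcal{W}(\mathcal{C})$ is immediate. For the converse, given $X\in\mathcal{GP}_0(\mathcal{C})\cap\mathcal{W}(\mathcal{C})$, the definition of Gorenstein-projective supplies a short exact sequence $0\to X\to P\to X'\to 0$ with $P$ projective and $X'\in\mathcal{GP}_0(\mathcal{C})$; since $(\mathcal{GP}_0(\mathcal{C}),\mathcal{W}(\mathcal{C}))$ is a cotorsion pair and $X\in\mathcal{W}(\mathcal{C})$, we have ${\rm Ext}^{1}_{\mathcal{C}}(X',X)=0$, so the sequence splits and $X$ is a direct summand of $P$, hence projective. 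The thickness of $\mathcal{W}(\mathcal{C})$ is noted in Section~2 and follows from the long exact sequence of ${\rm Ext}$ together with the uniform bound ${\rm pd}(X)\le FDP(\mathcal{C})<\infty$.

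With these verifications in hand, Hovey's correspondence yields the unique Abelian model structure whose cofibrant, fibrant and trivial classes are $\mathcal{GP}_0(\mathcal{C})$, ${\rm Ob}(\mathcal{C})$ and $\mathcal{W}(\mathcal{C})$, respectively. The dual statement is obtained by running the same scheme on the pairs $({\rm Ob}(\mathcal{C}),\mathcal{I}_0(\mathcal{C}))$ and $(\mathcal{W}(\mathcal{C}),\mathcal{GI}_0(\mathcal{C}))$: the first is complete because a Grothendieck category has enough injectives; the second by \cite[Theorem 2.24]{Estrada}; and compatibility reduces to the dual identity $\mathcal{GI}_0(\mathcal{C})\cap\mathcal{W}(\mathcal{C})=\mathcal{I}_0(\mathcal{C})$, proved by the mirror argument using a short exact sequence $0\to X'\to I\to X\to 0$ with $I$ injective and $X'\in\mathcal{GI}_0(\mathcal{C})$. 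The only substantive step is the identification $\mathcal{GP}_0(\mathcal{C})\cap\mathcal{W}(\mathcal{C})=\mathcal{P}_0(\mathcal{C})$ (and its injective dual); everything else is a direct assembly of the completeness theorems from \cite{Estrada} into Hovey's correspondence.
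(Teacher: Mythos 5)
Your proposal is correct and follows essentially the same route as the paper: the paper also obtains both structures by feeding the compatible complete pairs $(\mathcal{GP}_0(\mathcal{C}),\mathcal{W}(\mathcal{C}))$, $(\mathcal{P}_0(\mathcal{C}),{\rm Ob}(\mathcal{C}))$ (resp.\ $(\mathcal{W}(\mathcal{C}),\mathcal{GI}_0(\mathcal{C}))$, $({\rm Ob}(\mathcal{C}),\mathcal{I}_0(\mathcal{C}))$) into Hovey's correspondence, citing \cite[Theorems 2.24 and 2.25]{Estrada} for completeness and \cite[Corollary 8.5]{Hovey} for the identities $\mathcal{GP}_0\cap\mathcal{W}=\mathcal{P}_0$ and $\mathcal{GI}_0\cap\mathcal{W}=\mathcal{I}_0$. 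The only difference is that you spell out the splitting argument for those identities, which the paper simply quotes.
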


\vspace{0.5cm}

We know that ${\rm {\bf Ch}}(\mathcal{C})$ is a Gorenstein category if $\mathcal{C}$ is. The next result provides a characterization of the Gorenstein-projective and Gorenstein-injective chain complexes. This is also proven by Garc\'ia Rozas in \cite[Theorem 3.3.5 \& Corollary 3.2.3]{GR} for complexes over Gorenstein rings. In the author's opinion, the proof given next is shorter. 

\vspace{0.5cm}

\begin{proposition}\label{Gorcomplexes} Let $\mathcal{C}$ be a Gorenstein category. 
\begin{itemize}
\item[ {\bf (1)}] A chain complex $X$ over $\mathcal{C}$ is Gorenstein-projective, if and only if, $X_m$ is a Gorenstein-projective object of $\mathcal{C}$, for every $m \in \mathbb{Z}$. 

\item[ {\bf (2)}] A chain complex $Y$ over $\mathcal{C}$ is Gorenstein-injective if, and only if, $Y_m$ is a Gorenstein-injective object of $\mathcal{C}$, for every $m \in \mathbb{Z}$. 
\end{itemize}
\end{proposition}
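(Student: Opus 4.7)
The plan is to convert both statements into Ext-vanishing calculations using: (i) the characterizations $\mathcal{GP}_0(\mathcal{A})={}^\perp\mathcal{W}(\mathcal{A})$ and $\mathcal{GI}_0(\mathcal{A})=\mathcal{W}(\mathcal{A})^\perp$ from \cite[Theorems 2.24 and 2.25]{Estrada}, applicable to both $\mathcal{A}=\mathcal{C}$ and $\mathcal{A}=\Complexes$ (the latter being Gorenstein by the earlier example); (ii) the identification $\mathcal{W}(\Complexes)=\widetilde{\mathcal{W}(\mathcal{C})}$ from \cite[Proposition 3.1]{Perez}; and (iii) the pair of adjunction isomorphisms
\[ {\rm Ext}^i_{\Complexes}\bigl(X,D^{m+1}(Q)\bigr)\cong{\rm Ext}^i_{\mathcal{C}}(X_m,Q),\qquad {\rm Ext}^i_{\Complexes}\bigl(D^m(Q),Y\bigr)\cong{\rm Ext}^i_{\mathcal{C}}(Q,Y_m), \]
descending from the adjoint pairs $((-)_m,D^{m+1})$ and $(D^m,(-)_m)$ together with the exactness of $(-)_m$. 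A quick check confirms that $D^m(Q)\in\widetilde{\mathcal{W}(\mathcal{C})}$ whenever $Q\in\mathcal{W}(\mathcal{C})$, since $D^m(Q)$ is contractible with cycles in $\{0,Q\}$. Both forward implications then drop out: feeding such disk complexes into the pairings above and using $X\in{}^\perp\widetilde{\mathcal{W}(\mathcal{C})}$ (resp.\ $Y\in\widetilde{\mathcal{W}(\mathcal{C})}^\perp$) forces ${\rm Ext}^1_{\mathcal{C}}(X_m,Q)=0$ (resp.\ ${\rm Ext}^1_{\mathcal{C}}(Q,Y_m)=0$), proving $X_m\in\mathcal{GP}_0(\mathcal{C})$ and $Y_m\in\mathcal{GI}_0(\mathcal{C})$.

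For the converse of (1), assume each $X_m\in\mathcal{GP}_0(\mathcal{C})$. A syzygy argument upgrades the orthogonality to ${\rm Ext}^i_{\mathcal{C}}(X_m,Q)=0$ for every $i\geq 1$ and every projective $Q$, whence ${\rm Ext}^i_{\Complexes}(X,D^{m+1}(Q))=0$. Every projective complex $P$ is isomorphic to $\bigoplus_{m\in\mathbb{Z}}D^{m+1}(Z_m(P))$ with each $Z_m(P)$ projective in $\mathcal{C}$, and this direct sum agrees degreewise with the corresponding product since only two indices contribute in each degree; Ext then distributes across the product and ${\rm Ext}^i_{\Complexes}(X,P)=0$ for every projective complex $P$ and $i\geq 1$. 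Because $\Complexes$ is Gorenstein, any $W\in\widetilde{\mathcal{W}(\mathcal{C})}$ admits a finite projective resolution $0\to P_n\to\cdots\to P_0\to W\to 0$; dimension shifting along this resolution yields ${\rm Ext}^1_{\Complexes}(X,W)\cong{\rm Ext}^{n+1}_{\Complexes}(X,P_n)=0$, and so $X\in{}^\perp\widetilde{\mathcal{W}(\mathcal{C})}=\mathcal{GP}_0(\Complexes)$.

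For the converse of (2), assume each $Y_m\in\mathcal{GI}_0(\mathcal{C})$. The dimension shift symmetric to the previous paragraph does not terminate in the desired direction, so I would instead apply Eklof's Lemma. For any $W\in\widetilde{\mathcal{W}(\mathcal{C})}$, the canonical truncations $\tau_{\geq k}W$ are nested in $\widetilde{\mathcal{W}(\mathcal{C})}$ with successive quotients $\tau_{\geq k}W/\tau_{\geq k+1}W\cong D^{k+1}(Z_k(W))$ and $Z_k(W)\in\mathcal{W}(\mathcal{C})$. Reassembling these pieces into a continuous chain $(F_\alpha)_{\alpha<\lambda}$, starting at $F_0=0$ and indexed by a large enough ordinal $\lambda$, exhibits $W$ as an $\mathcal{S}$-filtered object, where $\mathcal{S}=\{D^{k+1}(Q):k\in\mathbb{Z},\,Q\in\mathcal{W}(\mathcal{C})\}$. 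Each member of $\mathcal{S}$ satisfies ${\rm Ext}^1_{\Complexes}(D^{k+1}(Q),Y)\cong{\rm Ext}^1_{\mathcal{C}}(Q,Y_{k+1})=0$ because $Y_{k+1}\in\mathcal{W}(\mathcal{C})^\perp$, so Eklof's Lemma delivers ${\rm Ext}^1_{\Complexes}(W,Y)=0$ and thus $Y\in\mathcal{GI}_0(\Complexes)$. The main technical obstacle I anticipate is producing this continuous chain in full generality: when $W$ is unbounded above no truncation $\tau_{\geq N}W$ vanishes, so one cannot simply take $F_k=\tau_{\geq -k}W$ and hope for an $\omega$-indexed filtration starting at $0$; some transfinite bookkeeping is required at limit stages.
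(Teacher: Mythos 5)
Your forward implications and your proof of part (1) are essentially the paper's own argument: the disk isomorphisms ${\rm Ext}^1_{\Complexes}(X,D^{m+1}(Q)) \cong {\rm Ext}^1_{\mathcal{C}}(X_m,Q)$ and ${\rm Ext}^1_{\Complexes}(D^m(Q),Y) \cong {\rm Ext}^1_{\mathcal{C}}(Q,Y_m)$, the identifications $\mathcal{GP}_0(\Complexes) = \mbox{}^\perp\widetilde{\mathcal{W}(\mathcal{C})}$ and $\mathcal{GI}_0(\Complexes) = (\widetilde{\mathcal{W}(\mathcal{C})})^\perp$, and the sum-equals-product decomposition of a projective complex into disks; your explicit dimension shift from projective complexes to all of $\widetilde{\mathcal{W}(\mathcal{C})}$ only spells out what the paper leaves implicit. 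The genuine gap is in your converse of (2). The filtration you invoke is never constructed: when $W$ is unbounded above, the chain of truncations $(\tau_{\geq k}W)_{k \in \mathbb{Z}}$ has every term nonzero and total intersection $0$, so it is order-isomorphic to $\mathbb{Z}$ and has no bottom; the obstruction is not ``bookkeeping at limit stages'' but the absence of any first nonzero filtration step, and you offer no substitute. Nor can this be waved through by a general principle that $\widetilde{\mathcal{A}}$-complexes are filtered by disks on objects of $\mathcal{A}$: for $\mathcal{A}$ the class of all modules that principle is false, since by Eklof's Lemma it would force every degreewise injective complex to be dg-injective, contradicting the existence of exact complexes of injectives with non-injective cycles (e.g. $\cdots \stackrel{2}\longrightarrow \mathbb{Z}_4 \stackrel{2}\longrightarrow \mathbb{Z}_4 \stackrel{2}\longrightarrow \cdots$). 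So the disk-filtration claim for $\widetilde{\mathcal{W}(\mathcal{C})}$ is at least as strong as the statement you are proving and would need its own argument, which is missing; as written, the converse of (2) is not established.

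Moreover, the reason you give for taking this detour is mistaken: the dual of your part (1) argument does terminate, and it is exactly what the paper intends when it says it proves only the Gorenstein-projective case. Since $\Complexes$ is Gorenstein and $\mathcal{W}(\Complexes) = \widetilde{\mathcal{W}(\mathcal{C})}$, every $W \in \widetilde{\mathcal{W}(\mathcal{C})}$ has finite injective dimension, so there is an exact sequence $0 \longrightarrow W \longrightarrow I^0 \longrightarrow \cdots \longrightarrow I^n \longrightarrow 0$ with each $I^j$ an injective complex. Any injective complex decomposes as $\bigoplus_{m} D^{m+1}(Z_m(I))$ with all $Z_m(I)$ injective in $\mathcal{C}$; since ${\rm Ext}$ turns this coproduct in the first variable into a product and ${\rm Ext}^i_{\Complexes}(D^{m+1}(J),Y) \cong {\rm Ext}^i_{\mathcal{C}}(J,Y_{m+1}) = 0$ for all $i \geq 1$ (each $Y_{m+1}$ being Gorenstein-injective and $J \in \mathcal{W}(\mathcal{C})$), we get ${\rm Ext}^i_{\Complexes}(I,Y) = 0$ for every injective complex $I$ and $i \geq 1$. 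Dimension shifting along the coresolution in the first variable then gives ${\rm Ext}^1_{\Complexes}(W,Y) \cong {\rm Ext}^{n+1}_{\Complexes}(C^n,Y)$ with the last cosyzygy $C^n = I^n$ injective, hence zero, so $Y \in (\widetilde{\mathcal{W}(\mathcal{C})})^\perp = \mathcal{GI}_0(\Complexes)$. Replacing your Eklof argument by this dual shift closes the gap and brings your proof in line with the paper's.
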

\begin{proof} We only prove the Gorenstein-projective case. Let $X$ be a Gorenstein-projective complex and $W \in \mathcal{W}(\mathcal{C})$. Then $D^{m+1}(W) \in \widetilde{\mathcal{W}(\mathcal{C})}$, for every $m \in \mathbb{Z}$. We have $0 = {\rm Ext}^1(X, D^{m+1}(W)) \cong {\rm Ext}^1(X_m, W)$ by \cite[Lemma 3.1]{Gil}. Hence $X_m \in \mbox{}^\perp\mathcal{W}(\mathcal{C}) = \mathcal{GP}_0(\mathcal{C})$. For the converse implication, if $X$ is a chain complex such that each $X_m$ is a Gorenstein-projective object, it suffices to show that ${\rm Ext}^{i+1}(X, P) = 0$, for every projective complex $P$ and every $i \geq 0$. Since $P$ is exact and each $Z_m(P)$ is projective in $\mathcal{C}$, we can write $P \cong \bigoplus_{m \in \mathbb{Z}} D^{m+1}(Z_m(P))$. Notice that in this case we actually have $P \cong \prod_{m \in \mathbb{Z}} D^{m+1}(Z_m(P))$. By \cite[Lemma 3.1]{Gil}, we have ${\rm Ext}^{i+1}(X, P) = 0$. Hence, $X$ is a Gorenstein-projective complex. 
\end{proof}

\vspace{0.25cm}

We finish this section checking if Hovey's Gorenstein-projective and Gorenstein-injective model structures are monoidal in the cases $\mathcal{C} = \Modl$ and $\mathcal{C} = \Cadl$, with $R$ a commutative Gorenstein ring. Both $\Modl$ and $\Cadl$ are equipped with monoidal structures given by the tensor products $\otimes_R$ and $\otimes$, respectively. Since $R$ is commutative, then these structures are symmetric. The unit object $S$ in $(\Cadl, \otimes)$ is given by $S = S^0(R)$. (See \cite[Section 4.1]{HoveyBook} for details). 

A \underline{monoidal model category} is a model category $\mathcal{C}$ equipped with a symmetric monoidal structure $(\otimes, S)$ such that the following conditions are satisfied: 
\begin{itemize}
\item[{\bf (1)}] For every pair of maps $f : U \longrightarrow V$ and $g : W \longrightarrow X$, the pushout of $f \otimes {\rm id}_W$ and ${\rm id}_U \otimes g$ induces a map $f \square g : (V \otimes W) \coprod_{U \otimes W} (U \otimes X) \longrightarrow V \otimes X$ making the following diagram commute: 
\[ \begin{tikzpicture}
\matrix (m) [matrix of math nodes, row sep=2em, column sep=2em]
{ U \otimes W & U \otimes X \\ V \otimes W & (V \otimes W) \coprod_{U \otimes W} (U \otimes X) \\ & & V \otimes X \\  };
\path[->]
(m-1-1) edge node[above] {${\rm id}_U \otimes g$} (m-1-2) edge node[left] {$f \otimes {\rm id}_W$} (m-2-1) (m-1-2) edge (m-2-2) (m-2-1) edge (m-2-2)
(m-1-2) edge [bend left=30] (m-3-3) (m-2-1) edge [bend right=30] (m-3-3);
\path[dotted,->]
(m-2-2) edge node[sloped] {$f \square g$} (m-3-3);
\end{tikzpicture} \]
If, given cofibrations $f : U \longrightarrow V$ and $g : W \longrightarrow X$ in $\mathcal{C}$, the induced map $f \square g$ is a cofibration, which is trivial if either $f$ or $g$ is. 

\item[{\bf (2)}] Using functorial factorizations, write $0 \longrightarrow S = 0 \longrightarrow Q(S) \stackrel{q}\longrightarrow S$ as the composition of a cofibration followed by a trivial fibration. Then the maps $q \otimes X : Q(S) \otimes X \longrightarrow S \otimes X$ and $X \otimes q : X \otimes Q(S) \longrightarrow X \otimes S$ are weak equivalences for all cofibrant objects $X$. 
\end{itemize}

We show that the Gorenstein-projective model structure is not monoidal, in general. First, we study the module case. Consider the ring $R = \mathbb{Z}_4$. Note that $\mathbb{Z}_4$ is a quasi-Frobenius ring. On the other hand, every module over such a ring is Gorenstein-projective. For if $M$ is a left $R$-module with $R$ quasi-Frobinius, consider a left projective resolution $\cdots \longrightarrow P_1 \stackrel{f_1}\longrightarrow P_0 \stackrel{f_0}\longrightarrow M \longrightarrow 0$ and a right injective resolution $0 \longrightarrow M \stackrel{g^0}\longrightarrow I^0 \stackrel{g^1}\longrightarrow I^1\longrightarrow \cdots$ of $M$. Taking the composition $g^0 \circ f_0$, we have an exact sequence of projective modules $\cdots \longrightarrow P_1 \stackrel{f_1}\longrightarrow P_0 \stackrel{g^0 \circ f_0}\longrightarrow I^0 \stackrel{g^1}\longrightarrow I^1 \longrightarrow \cdots$ such that $M = {\rm Ker}(g^1)$. It is clear that this sequence is ${\rm Hom}_R(-,\mathcal{P}_0) = {\rm Hom}_R(-,\mathcal{I}_0)$-exact. Hence $M$ is Gorenstein-projective. 

Back to the example $R = \mathbb{Z}_4$, we show that the Gorenstein-projective model structure on $\mathbb{Z}_4$-modules is not monoidal.. There exists a left $\mathbb{Z}_4$-module with infinite projective dimension. Recall that the \underline{left global dimension} of a ring $R$ is defined as the value ${\rm sup}\{ {\rm pd}(M) \mbox{ : } M \in \Modl \}$. It is known that if $R$ is a left Noetherian ring, then the left global dimension and the weak dimension of $R$ coincide \cite[Corollary 4.21]{Osborne}. So $\mathbb{Z}_4$ has infinite left global dimension since it is left Noetherian with infinite weak dimension. It follows there exists a Gorenstein-projective $\mathbb{Z}_4$-module $C$ with infinite projective dimension. Consider the trivial cofibration $0 \longrightarrow \mathbb{Z}_4$ and the cofibration $0 \longrightarrow C$. The map $0 \longrightarrow \mathbb{Z}_4 \otimes_{\mathbb{Z}_4} C = (0 \longrightarrow \mathbb{Z}_4) \square (0 \longrightarrow C)$ is not a trivial cofibration, since $\mathbb{Z}_4 \otimes_{\mathbb{Z}_4} C \cong C$ has not finite projective dimension. These arguments also work to show that the Gorenstein-projective model structure on ${\rm {\bf Ch}}(\mbox{}_{\mathbb{Z}_4}{\rm {\bf Mod}})$ is not monoidal. It suffices to consider $D^1(\mathbb{Z}_4)$ and $S^0(C)$, where $D^1(\mathbb{Z}_4) \otimes S^0(C) \cong D^1(C)$, which is Gorenstein-projective with infinite projective dimension. In a similar way, one can show that the Gorenstein-injective model structure is not monoidal on modules or chain complexes over $\mathbb{Z}_4$. It is worth noting in this case that the left global dimension of a ring is also given by ${\rm sup}\{ {\rm id}(M) \mbox{ : } M \in \Modl \}$.


\section{Cotorsion pairs from Gorenstein-projective and Gorenstein-injective dimensions}

The goal in this section is to extend Hovey's Gorenstein-projective and Gorenstein-injective model structures to any Gorenstein-homological dimension. 

Let $\mathcal{F}$ be a class of objects of an Abelian category $\mathcal{C}$ and $X \in {\rm Ob}(\mathcal{C})$. A \underline{left $\mathcal{F}$-resolution} of $X$ is a ${\rm Hom}(\mathcal{F}, -)$-exact (but not necessarily exact) complex $\cdots \longrightarrow F_1 \longrightarrow F_0 \longrightarrow X \longrightarrow 0$ with each $F_i \in \mathcal{F}$. Dually, a ${\rm Hom}(-,\mathcal{F})$-exact complex $0 \longrightarrow X \longrightarrow F^0 \longrightarrow F^1 \longrightarrow \cdots$ with each $F^i \in \mathcal{F}$ is called \underline{right $\mathcal{F}$-resolution} of $X$. 

We say that a map $F \longrightarrow X$ with $F \in \mathcal{F}$ is a \underline{special $\mathcal{F}$-precover} if it is surjective with kernel in $\mathcal{F}^\perp$. The class $\mathcal{F}$ is called  \underline{special pre-covering} if every object of $X$ has a especial $\mathcal{F}$-pre-cover. \underline{Special pre-envelopes} and \underline{special pre-enveloping classes} are defined dually. The following proposition is easy to show. 

\vspace{0.5cm}

\begin{proposition} Let $\mathcal{C}$ be a an abelian category and $\mathcal{F}$ be a class of objects of $\mathcal{C}$. 
\begin{itemize}
\item[ {\bf (1)}] If $\mathcal{F}$ is a special pre-covering class, then every object of $\mathcal{C}$ has an exact left $\mathcal{F}$-resolution. 

\item[ {\bf (2)}] If $\mathcal{F}$ is a special pre-enveloping class, then every object of $\mathcal{C}$ has an exact right $\mathcal{F}$-resolution. 
\end{itemize}
\end{proposition}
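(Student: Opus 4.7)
The plan is to prove only part (1), as part (2) follows by a standard duality argument in the opposite category. The construction of the required resolution is inductive: starting from $X$, use the hypothesis to pick a special $\mathcal{F}$-precover $F_0 \twoheadrightarrow X$ with kernel $K_0 \in \mathcal{F}^{\perp}$; then pick a special $\mathcal{F}$-precover $F_1 \twoheadrightarrow K_0$ with kernel $K_1 \in \mathcal{F}^{\perp}$; and so on, obtaining short exact sequences $0 \to K_n \to F_n \to K_{n-1} \to 0$ for every $n \geq 0$ (setting $K_{-1} := X$), with each $F_n \in \mathcal{F}$ and each $K_n \in \mathcal{F}^{\perp}$. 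Splicing these yields a complex $\cdots \to F_1 \to F_0 \to X \to 0$ whose cycles at $F_n$ equal $K_n$ and whose image at $F_n$ equals $K_{n-1}$, giving exactness of the complex for free.

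It remains to check that this complex is ${\rm Hom}_{\mathcal{C}}(\mathcal{F},-)$-exact, which is what upgrades it to a left $\mathcal{F}$-resolution in the sense of the excerpt. Fix $F \in \mathcal{F}$ and apply ${\rm Hom}_{\mathcal{C}}(F,-)$ to each short exact sequence $0 \to K_n \to F_n \to K_{n-1} \to 0$. Since $K_n \in \mathcal{F}^{\perp}$, we have ${\rm Ext}^1_{\mathcal{C}}(F, K_n) = 0$, so the resulting sequence
\[
0 \longrightarrow {\rm Hom}_{\mathcal{C}}(F, K_n) \longrightarrow {\rm Hom}_{\mathcal{C}}(F, F_n) \longrightarrow {\rm Hom}_{\mathcal{C}}(F, K_{n-1}) \longrightarrow 0
\]
is short exact for every $n \geq 0$. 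Splicing these short exact sequences along ${\rm Hom}_{\mathcal{C}}(F, K_{n-1})$ produces an exact complex whose terms are ${\rm Hom}_{\mathcal{C}}(F, F_n)$ and ${\rm Hom}_{\mathcal{C}}(F, X)$ and whose differentials are obtained from those of the original resolution; this is exactly the ${\rm Hom}_{\mathcal{C}}(F,-)$-exactness we need.

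There is no real obstacle here: the only potentially subtle point is that the definition of ``left $\mathcal{F}$-resolution'' in the excerpt already includes ${\rm Hom}_{\mathcal{C}}(\mathcal{F},-)$-exactness, so one must verify both the exactness of the complex (immediate from the iteration) and the ${\rm Hom}_{\mathcal{C}}(\mathcal{F},-)$-exactness (immediate from $K_n \in \mathcal{F}^{\perp}$). For part (2), the dual construction starts with $X \hookrightarrow F^0$ a special $\mathcal{F}$-preenvelope with cokernel $C_0 \in {}^{\perp}\mathcal{F}$, iterates to produce short exact sequences $0 \to C_{n-1} \to F^n \to C_n \to 0$, and then uses ${\rm Ext}^1_{\mathcal{C}}(C_n, F) = 0$ for $F \in \mathcal{F}$ to verify ${\rm Hom}_{\mathcal{C}}(-,\mathcal{F})$-exactness in exactly the same way.
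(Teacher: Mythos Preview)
Your proof is correct. The paper does not actually give a proof of this proposition; it simply precedes the statement with ``The following proposition is easy to show.'' Your argument is the standard one and fills in exactly the details one would expect: iterate special $\mathcal{F}$-precovers to build the resolution, and use $K_n \in \mathcal{F}^\perp$ (hence ${\rm Ext}^1_{\mathcal{C}}(F,K_n)=0$ for $F\in\mathcal{F}$) to obtain ${\rm Hom}_{\mathcal{C}}(\mathcal{F},-)$-exactness by splicing the resulting short exact Hom-sequences.
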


\vspace{0.5cm}

\begin{corollary}\label{lapropores} If $\mathcal{C}$ is a Gorenstein category, then every object of $\mathcal{C}$ has an exact left Gorenstein-projective resolution and an exact right Gorenstein-injective resolution. 
\end{corollary}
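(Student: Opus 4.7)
The plan is to reduce the corollary to the preceding proposition by checking that $\mathcal{GP}_0(\mathcal{C})$ is a special pre-covering class and that $\mathcal{GI}_0(\mathcal{C})$ is a special pre-enveloping class in $\mathcal{C}$. Both assertions should follow immediately from the completeness of the cotorsion pairs $(\mathcal{GP}_0(\mathcal{C}), \mathcal{W}(\mathcal{C}))$ and $(\mathcal{W}(\mathcal{C}), \mathcal{GI}_0(\mathcal{C}))$, already recorded in the excerpt from \cite[Theorems 2.24 and 2.25]{Estrada}.

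For the Gorenstein-projective half, I would take an arbitrary $X \in {\rm Ob}(\mathcal{C})$ and apply completeness of $(\mathcal{GP}_0(\mathcal{C}), \mathcal{W}(\mathcal{C}))$ to produce a short exact sequence $0 \longrightarrow W \longrightarrow C \longrightarrow X \longrightarrow 0$ with $C \in \mathcal{GP}_0(\mathcal{C})$ and $W \in \mathcal{W}(\mathcal{C})$. Since $(\mathcal{GP}_0(\mathcal{C}), \mathcal{W}(\mathcal{C}))$ is a cotorsion pair, we have $\mathcal{W}(\mathcal{C}) = \mathcal{GP}_0(\mathcal{C})^\perp$, so the epimorphism $C \longrightarrow X$ is precisely a special $\mathcal{GP}_0(\mathcal{C})$-pre-cover. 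Hence $\mathcal{GP}_0(\mathcal{C})$ is a special pre-covering class, and the preceding proposition then delivers an exact left Gorenstein-projective resolution of $X$.

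The Gorenstein-injective half is dual: completeness of $(\mathcal{W}(\mathcal{C}), \mathcal{GI}_0(\mathcal{C}))$ gives for each $X$ a short exact sequence $0 \longrightarrow X \longrightarrow D \longrightarrow W' \longrightarrow 0$ with $D \in \mathcal{GI}_0(\mathcal{C})$ and $W' \in \mathcal{W}(\mathcal{C}) = {}^\perp\mathcal{GI}_0(\mathcal{C})$, which is a special $\mathcal{GI}_0(\mathcal{C})$-pre-envelope. Thus $\mathcal{GI}_0(\mathcal{C})$ is a special pre-enveloping class, and the previous proposition yields an exact right Gorenstein-injective resolution.

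There is essentially no obstacle here: the substantive content has already been absorbed into the completeness of the two cotorsion pairs together with the identifications $\mathcal{W}(\mathcal{C}) = \mathcal{GP}_0(\mathcal{C})^\perp$ and $\mathcal{W}(\mathcal{C}) = {}^\perp \mathcal{GI}_0(\mathcal{C})$. The only subtlety worth flagging is that when one iterates a special pre-cover into a full resolution (inside the proof of the previous proposition), each successive kernel lies in $\mathcal{W}(\mathcal{C})$, and hence continues to admit a special Gorenstein-projective pre-cover; the resulting resolution is automatically ${\rm Hom}_{\mathcal{C}}(\mathcal{GP}_0(\mathcal{C}),-)$-exact because every ${\rm Ext}^1_{\mathcal{C}}(G, K)$ with $G \in \mathcal{GP}_0(\mathcal{C})$ and $K \in \mathcal{W}(\mathcal{C})$ vanishes.
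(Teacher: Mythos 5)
Your proposal is correct and is exactly the argument the paper intends: the corollary is deduced from the preceding proposition by observing that completeness of the cotorsion pairs $(\mathcal{GP}_0(\mathcal{C}), \mathcal{W}(\mathcal{C}))$ and $(\mathcal{W}(\mathcal{C}), \mathcal{GI}_0(\mathcal{C}))$ makes $\mathcal{GP}_0(\mathcal{C})$ a special pre-covering class and $\mathcal{GI}_0(\mathcal{C})$ a special pre-enveloping class. Your closing remark about the iterated kernels lying in $\mathcal{W}(\mathcal{C})$ and the resulting ${\rm Hom}_{\mathcal{C}}(\mathcal{GP}_0(\mathcal{C}),-)$-exactness is a sensible check, but it concerns the proof of the preceding proposition rather than anything beyond the paper's route.
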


\vspace{0.5cm}

\begin{definition} {\rm In a Gorenstein category $\mathcal{C}$, it makes sense to consider left $r$-$\mathcal{GP}_0(\mathcal{C})$-objects, where $r$ is a nonnegative integer, since $(\mathcal{GP}_0(\mathcal{C}), \mathcal{W}(\mathcal{C}))$ is a complete cotorsion pair. To simplify, we say that an object $X$ is \underline{Gorenstein-$r$-projective} if it is a left $r$-$\mathcal{GP}_0(\mathcal{C})$-object. Let $\mathcal{GP}_r(\mathcal{C})$ denote the class of Gorenstein-$r$-projective objects. We denote by ${\rm Gpd}(X)$ the (left) Gorenstein-projective dimension of $X$. Notice $\mathcal{GP}_r(\mathcal{C}) = \{ X \in {\rm Ob}(\mathcal{C}) \mbox{ : } {\rm Gpd}(X) \leq r \}$. }
\end{definition}

\begin{proposition}\label{coroproj} Let $\mathcal{C}$ be a Gorenstein category.
\begin{itemize}
\item[ {\bf (1)}] ${\rm Gpd}(X) \leq FDI(\mathcal{C})$ for every object $X \in {\rm Ob}(\mathcal{C})$.

\item[ {\bf (2)}] For every $0 < r \leq FDI(\mathcal{C})$, $\mathcal{GP}_r(\mathcal{C}) \cap \mathcal{W}(\mathcal{C}) = \mathcal{P}_r(\mathcal{C})$. 
\end{itemize} 
\end{proposition}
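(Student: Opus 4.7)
The plan is to reduce both parts to an ${\rm Ext}$-vanishing statement obtained by dimension shifting, pivoting on the following auxiliary fact: if $G$ is Gorenstein-projective and $W \in \mathcal{W}(\mathcal{C})$, then ${\rm Ext}^i(G,W) = 0$ for every $i \geq 1$. This fact follows by induction on $i$: the defining complete projective resolution of $G$ supplies a short exact sequence $0 \to G' \to P \to G \to 0$ with $P$ projective and $G'$ again Gorenstein-projective (as the next kernel along the resolution), so ${\rm Ext}^i(G,W) \cong {\rm Ext}^{i-1}(G',W)$, and the statement reduces to the base case $i = 1$, which is the cotorsion-pair identity $\mathcal{GP}_0(\mathcal{C}) = \mbox{}^\perp \mathcal{W}(\mathcal{C})$ from \cite[Theorem 2.25]{Estrada}.

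For {\bf (1)}, I invoke Corollary \ref{lapropores} to fix an exact left Gorenstein-projective resolution of $X$. Setting $N = FDI(\mathcal{C})$ and truncating yields an exact sequence $0 \to K_N \to G_{N-1} \to \cdots \to G_0 \to X \to 0$ with each $G_i \in \mathcal{GP}_0(\mathcal{C})$. The auxiliary fact lets me dimension-shift through this sequence to obtain ${\rm Ext}^1(K_N,W) \cong {\rm Ext}^{N+1}(X,W)$ for every $W \in \mathcal{W}(\mathcal{C})$. Since the discussion preceding Proposition \ref{hoveygorinjcat} records $\mathcal{W}(\mathcal{C}) = \mathcal{I}_N(\mathcal{C})$, we have ${\rm id}(W) \leq N$, so ${\rm Ext}^{N+1}(X,W) = 0$. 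Hence $K_N \in \mbox{}^\perp \mathcal{W}(\mathcal{C}) = \mathcal{GP}_0(\mathcal{C})$ and ${\rm Gpd}(X) \leq N$.

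For {\bf (2)}, the inclusion $\mathcal{P}_r(\mathcal{C}) \subseteq \mathcal{GP}_r(\mathcal{C}) \cap \mathcal{W}(\mathcal{C})$ is immediate, since every projective object is Gorenstein-projective. Conversely, let $X \in \mathcal{GP}_r(\mathcal{C}) \cap \mathcal{W}(\mathcal{C})$ and pick a projective resolution with $r$-th syzygy $K_r$. Because $X$ has finite projective dimension, so does $K_r$; in particular $K_r \in \mathcal{W}(\mathcal{C})$. On the other hand, a Gorenstein-projective resolution $0 \to G_r \to \cdots \to G_0 \to X \to 0$ of length $r$, combined with the auxiliary vanishing applied to the $G_j$, gives ${\rm Ext}^{r+1}(X,W) \cong {\rm Ext}^1(G_r,W) = 0$ for every $W \in \mathcal{W}(\mathcal{C})$, and dimension-shifting through the projective resolution identifies this with ${\rm Ext}^1(K_r,W)$. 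Therefore $K_r \in \mathcal{GP}_0(\mathcal{C}) \cap \mathcal{W}(\mathcal{C}) = \mathcal{P}_0(\mathcal{C})$, by the equality recalled just before Hovey's model-structure theorem, so $K_r$ is projective and ${\rm pd}(X) \leq r$.

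The main obstacle is verifying the auxiliary ${\rm Ext}$-vanishing lemma in the generality of a Gorenstein category rather than only for modules over an Iwanaga--Gorenstein ring, but this is essentially built into the abstract setup: the definition simultaneously makes available both the complete-projective-resolution description of $\mathcal{GP}_0(\mathcal{C})$ and the cotorsion-pair characterization $\mathcal{GP}_0(\mathcal{C}) = \mbox{}^\perp \mathcal{W}(\mathcal{C})$, so the induction on $i$ goes through without change.
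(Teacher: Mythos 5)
Your argument is correct, and its skeleton is the same as the paper's: reduce everything to the vanishing of ${\rm Ext}^{>r}_{\mathcal{C}}(X,W)$ for $W \in \mathcal{W}(\mathcal{C})$, pass to a syzygy, and conclude with the thickness of $\mathcal{W}(\mathcal{C})$ and the equality $\mathcal{GP}_0(\mathcal{C}) \cap \mathcal{W}(\mathcal{C}) = \mathcal{P}_0(\mathcal{C})$. The genuine difference is that where the paper simply cites \cite[Proposition 11.5.7]{EJ} (justifying its use in an arbitrary Gorenstein category only by a footnote), you reprove exactly the two implications of that proposition that are needed: your auxiliary lemma that ${\rm Ext}^i_{\mathcal{C}}(G,W)=0$ for all $G \in \mathcal{GP}_0(\mathcal{C})$, $W \in \mathcal{W}(\mathcal{C})$, $i \geq 1$ (a special case of \cite[Proposition 10.2.6]{EJ}, which the paper itself invokes later in Proposition \ref{igualdadesperp}), combined with dimension shifting along the exact Gorenstein-projective resolutions furnished by Corollary \ref{lapropores} and along a projective resolution, shows that the relevant syzygy lands in $\mbox{}^\perp(\mathcal{W}(\mathcal{C})) = \mathcal{GP}_0(\mathcal{C})$. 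What this buys is self-containedness: it substantiates the paper's footnote, using only ingredients the abstract setting provides ($\mathcal{GP}_0(\mathcal{C}) = \mbox{}^\perp(\mathcal{W}(\mathcal{C}))$ from \cite[Theorem 2.25]{Estrada}, enough projectives, and the shift-invariance of complete resolutions); the cost is only length. Two cosmetic remarks: in part {\bf (1)} all you need is ${\rm id}(W) \leq FDI(\mathcal{C})$ for $W \in \mathcal{W}(\mathcal{C})$, which follows directly from conditions {\bf (1)} and {\bf (2)} of the definition of a Gorenstein category (the equality $\mathcal{W}(\mathcal{C}) = \mathcal{I}_N(\mathcal{C})$, with $N = FDI(\mathcal{C})$, is recorded before the Gorenstein-category version of Hovey's Theorem 8.3 rather than before Proposition \ref{hoveygorinjcat}); and in part {\bf (2)} the easy inclusion also uses the trivial fact $\mathcal{P}_r(\mathcal{C}) \subseteq \mathcal{W}(\mathcal{C})$, which you should state explicitly.
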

\begin{proof} To prove the first part, notice that ${\rm id}(W) \leq FDI(\mathcal{C})$, for every $W \in \mathcal{W}(\mathcal{C})$. Then we have ${\rm Ext}^{FDI(\mathcal{C}) + 1}_{\mathcal{C}}(X, W) = 0$. By \cite[Proposition 11.5.7]{EJ}\footnote{Although this result is stated for modules over Gorenstein ring, it carries over to any Gorenstein category.}, we have ${\rm Gpd}(X) \leq FDI(\mathcal{C})$. 

For the second part, it suffices to show that every $X \in \mathcal{GP}_r(\mathcal{C}) \cap \mathcal{W}(\mathcal{C})$ is in $\mathcal{P}_r(\mathcal{C})$. Notice every $G \in \Omega^r(X)$ is in $\mathcal{GP}_0(\mathcal{C})$. Since $\mathcal{W}(\mathcal{C})$ is thick, we also have $G \in \mathcal{W}(\mathcal{C})$. Then $G \in \mathcal{GP}_0(\mathcal{C}) \cap \mathcal{W}(\mathcal{C}) = \mathcal{P}_0(\mathcal{C})$, by \cite[Corollary 8.5]{Hovey}. It follows $X \in \mathcal{P}_r(\mathcal{C})$. 
\end{proof}

\vspace{0.5cm}

\begin{proposition} Let $\mathcal{C}$ be a Gorenstein category. Then $\mathcal{GP}_r(\mathcal{C})$ is the left half of a cotorsion pair $(\mathcal{GP}_r(\mathcal{C}), (\mathcal{GP}_r(\mathcal{C}))^\perp)$, for every $0 < r \leq FDI(\mathcal{C})$. 
\end{proposition}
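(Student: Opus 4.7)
The plan is to establish the equality $\mbox{}^\perp\bigl((\mathcal{GP}_r(\mathcal{C}))^\perp\bigr) = \mathcal{GP}_r(\mathcal{C})$, which together with the tautological identity on the right-hand side exhibits $(\mathcal{GP}_r(\mathcal{C}), (\mathcal{GP}_r(\mathcal{C}))^\perp)$ as a cotorsion pair. The inclusion $\mathcal{GP}_r(\mathcal{C}) \subseteq \mbox{}^\perp\bigl((\mathcal{GP}_r(\mathcal{C}))^\perp\bigr)$ is formal, so only the reverse inclusion needs argument.

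The main tool is the intrinsic characterization already invoked in the proof of Proposition \ref{coroproj}, namely that $X \in \mathcal{GP}_r(\mathcal{C})$ if and only if ${\rm Ext}^{r+1}_{\mathcal{C}}(X, W) = 0$ for every $W \in \mathcal{W}(\mathcal{C})$. To exploit this I use that $\mathcal{C}$ is Grothendieck and so has enough injectives: for each $W \in \mathcal{W}(\mathcal{C})$ I fix an injective coresolution and let $V(W)$ denote its $r$-th cosyzygy, so that there is an exact sequence $0 \to W \to I^0 \to \cdots \to I^{r-1} \to V(W) \to 0$ with each $I^j$ injective. Splitting this into short exact sequences and applying the vanishing ${\rm Ext}^i_{\mathcal{C}}(Z, I^j) = 0$ for $i \geq 1$ and all $Z$ yields, by the standard dimension-shift argument, a natural isomorphism ${\rm Ext}^1_{\mathcal{C}}(Z, V(W)) \cong {\rm Ext}^{r+1}_{\mathcal{C}}(Z, W)$ in every $Z$.

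From here the argument is short. First, apply the isomorphism with $Z = Y$ arbitrary in $\mathcal{GP}_r(\mathcal{C})$: the right-hand side vanishes by the characterization above, so $V(W) \in (\mathcal{GP}_r(\mathcal{C}))^\perp$. Now take any $X \in \mbox{}^\perp\bigl((\mathcal{GP}_r(\mathcal{C}))^\perp\bigr)$; then ${\rm Ext}^1_{\mathcal{C}}(X, V(W)) = 0$ by definition, and the isomorphism read in the other direction gives ${\rm Ext}^{r+1}_{\mathcal{C}}(X, W) = 0$ for every $W \in \mathcal{W}(\mathcal{C})$. Invoking the characterization once more, $X \in \mathcal{GP}_r(\mathcal{C})$, as required.

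No serious obstacle is expected; the only point to watch is that $V(W)$ depends on the chosen injective coresolution, but any two such choices differ by split injective summands, which are $\mathrm{Ext}^1$-trivial, so the orthogonality computations are independent of the choice. I note that this argument only establishes that $(\mathcal{GP}_r(\mathcal{C}), (\mathcal{GP}_r(\mathcal{C}))^\perp)$ is a cotorsion pair; completeness (\emph{via} a cogenerating set built from the set $\mathcal{T}$ of the preceding proposition and the Eklof--Trlifaj theorem) is a separate matter, presumably addressed by the paper in a subsequent step.
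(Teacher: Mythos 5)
Your proof is correct and follows essentially the same route as the paper: both reduce to the nontrivial inclusion ${}^\perp\bigl((\mathcal{GP}_r(\mathcal{C}))^\perp\bigr) \subseteq \mathcal{GP}_r(\mathcal{C})$, dimension-shift ${\rm Ext}^{r+1}_{\mathcal{C}}(X,W)$ to ${\rm Ext}^1_{\mathcal{C}}(X,K)$ where $K$ is an $r$-th cosyzygy of $W \in \mathcal{W}(\mathcal{C})$, and show $K \in (\mathcal{GP}_r(\mathcal{C}))^\perp$, invoking the characterization of Gorenstein-projective dimension from [EJ, Proposition 11.5.7]. The only cosmetic difference is that the paper tests membership in $\mathcal{GP}_r(\mathcal{C})$ by showing the $r$-th syzygy of $X$ is Gorenstein-projective, whereas you use the equivalent ${\rm Ext}^{r+1}$-vanishing criterion directly and spell out the verification (left as ``not hard to see'' in the paper) that the cosyzygy lies in the right orthogonal.
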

\begin{proof} It suffices to show the inclusion $\mbox{}^\perp((\mathcal{GP}_r(\mathcal{C}))^\perp) \subseteq \mathcal{GP}_r(\mathcal{C})$. Let $X \in \mbox{}^\perp((\mathcal{GP}_r(\mathcal{C}))^\perp)$ and consider an exact partial left projective resolution of $X$, i.e. an exact sequence of finite length $0 \longrightarrow C \longrightarrow P_{r-1} \longrightarrow \cdots \longrightarrow P_0 \longrightarrow X \longrightarrow 0$ with each $P_i$ projective. By \cite[Proposition 11.5.7]{EJ}, it suffices to show that $C$ is Gorenstein-projective, i.e. that ${\rm Ext}^1_{\mathcal{C}}(C, W) = 0$ for every $W \in \mathcal{W}(\mathcal{C})$. By Dimension Shifting (see \cite[Proposition 4.2]{Osborne}), we have ${\rm Ext}^1_{\mathcal{C}}(C,W) \cong {\rm Ext}^{1+r}_{\mathcal{C}}(X,W)$. On the other hand, ${\rm Ext}^{1+r}_{\mathcal{C}}(X,W) \cong {\rm Ext}^1_{\mathcal{C}}(X, K)$, where $K \in \Omega^{-r}(W)$. It is not hard to see that $K \in (\mathcal{GP}_r(\mathcal{C}))^\perp$. It follows ${\rm Ext}^1_{\mathcal{C}}(X, K) = 0$, and hence ${\rm Ext}^1_{\mathcal{C}}(C,W) = 0$, for every $W \in \mathcal{W}(\mathcal{C})$.  
\end{proof}

\vspace{0.25cm}

It is known that $(\mathcal{P}_r, (\mathcal{P}_r)^\perp)$ is a complete cotorsion pair in the category of left $R$-modules (See \cite[Theorem 7.4.6]{EJ}). In the given reference, the authors prove that $(\mathcal{P}_r, (\mathcal{P}_r)^\perp)$ is a cotorsion pair after showing how to construct $\kappa$-small $r$-projective transfinite extensions for every $r$-projective module, where $\kappa$ is an infinite cardinal satisfying $\kappa > {\rm Card}(R)$. A simpler proof can be given in the context of Gorenstein categories. \\

\begin{corollary} If $\mathcal{C}$ is a Gorenstein category, then $(\mathcal{P}_r(\mathcal{C}), (\mathcal{P}_r(\mathcal{C}))^\perp)$ is a cotorsion pair.
\end{corollary}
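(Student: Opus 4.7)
The plan is to adapt essentially verbatim the argument used in the preceding proposition for $(\mathcal{GP}_r(\mathcal{C}), (\mathcal{GP}_r(\mathcal{C}))^\perp)$, replacing Gorenstein-projectivity by projectivity and the class $\mathcal{W}(\mathcal{C})$ by all of ${\rm Ob}(\mathcal{C})$. Since $(\mathcal{P}_r(\mathcal{C}))^\perp$ is by definition the right-orthogonal complement of $\mathcal{P}_r(\mathcal{C})$, the inclusion $\mathcal{P}_r(\mathcal{C}) \subseteq \mbox{}^\perp((\mathcal{P}_r(\mathcal{C}))^\perp)$ is automatic, and the entire content of the corollary lies in verifying the reverse inclusion.

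So, I would take $X \in \mbox{}^\perp((\mathcal{P}_r(\mathcal{C}))^\perp)$. Because every Gorenstein category has enough projectives, one can form an exact partial projective resolution $0 \longrightarrow C \longrightarrow P_{r-1} \longrightarrow \cdots \longrightarrow P_0 \longrightarrow X \longrightarrow 0$. It will suffice to show that $C$ is projective, for then $X \in \mathcal{P}_r(\mathcal{C})$ by definition. To prove that $C$ is projective, I would show that ${\rm Ext}^1_{\mathcal{C}}(C, Y) = 0$ for every object $Y$ of $\mathcal{C}$. Classical dimension shifting along the above resolution yields ${\rm Ext}^1_{\mathcal{C}}(C, Y) \cong {\rm Ext}^{r+1}_{\mathcal{C}}(X, Y)$, and taking any injective coresolution of $Y$ (available since $\mathcal{C}$ is Grothendieck) provides an isomorphism ${\rm Ext}^{r+1}_{\mathcal{C}}(X, Y) \cong {\rm Ext}^1_{\mathcal{C}}(X, K)$ for any $K \in \Omega^{-r}(Y)$.

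The key step is then to verify that $K$ lies in $(\mathcal{P}_r(\mathcal{C}))^\perp$. For any $Z \in \mathcal{P}_r(\mathcal{C})$, a further application of dimension shifting gives ${\rm Ext}^1_{\mathcal{C}}(Z, K) \cong {\rm Ext}^{r+1}_{\mathcal{C}}(Z, Y) = 0$, the vanishing following from ${\rm pd}(Z) \leq r$. Combining $K \in (\mathcal{P}_r(\mathcal{C}))^\perp$ with the hypothesis $X \in \mbox{}^\perp((\mathcal{P}_r(\mathcal{C}))^\perp)$ gives ${\rm Ext}^1_{\mathcal{C}}(X, K) = 0$, whence ${\rm Ext}^1_{\mathcal{C}}(C, Y) = 0$. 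As $Y$ was arbitrary, $C$ is projective, completing the argument.

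I do not anticipate a genuine obstacle: the proof is structurally identical to the preceding Gorenstein-projective case, with classical dimension shifting taking the place of the appeal to \cite[Proposition 11.5.7]{EJ} that was needed to detect Gorenstein-projectivity. The only point requiring care is to confirm that dimension shifting and the required projective resolutions and injective coresolutions are legitimate in an arbitrary Gorenstein category, which they are since such categories are Grothendieck and have enough projectives.
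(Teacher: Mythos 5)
Your argument is correct, but it is not the route the paper takes. The paper's proof is purely formal: since $\mathcal{P}_r(\mathcal{C}) \subseteq \mathcal{GP}_r(\mathcal{C})$ and $\mathcal{P}_r(\mathcal{C}) \subseteq \mathcal{W}(\mathcal{C})$, taking double orthogonals gives $\mbox{}^\perp\bigl((\mathcal{P}_r(\mathcal{C}))^\perp\bigr) \subseteq \mbox{}^\perp\bigl((\mathcal{GP}_r(\mathcal{C}))^\perp\bigr) \cap \mbox{}^\perp\bigl((\mathcal{W}(\mathcal{C}))^\perp\bigr) = \mathcal{GP}_r(\mathcal{C}) \cap \mathcal{W}(\mathcal{C})$, which equals $\mathcal{P}_r(\mathcal{C})$ by the earlier identity $\mathcal{GP}_r(\mathcal{C}) \cap \mathcal{W}(\mathcal{C}) = \mathcal{P}_r(\mathcal{C})$; here the paper quietly uses that $\mathcal{GP}_r(\mathcal{C}) = \mbox{}^\perp\bigl((\mathcal{GP}_r(\mathcal{C}))^\perp\bigr)$ (the preceding proposition) and that $\mathcal{W}(\mathcal{C}) = \mbox{}^\perp\bigl((\mathcal{W}(\mathcal{C}))^\perp\bigr)$, which follows from $(\mathcal{W}(\mathcal{C}), \mathcal{GI}_0(\mathcal{C}))$ being a cotorsion pair. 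You instead redo the dimension-shifting argument of the $\mathcal{GP}_r$ proposition with ``projective'' in place of ``Gorenstein-projective'': form a partial projective resolution, shift ${\rm Ext}^1_{\mathcal{C}}(C,Y)$ to ${\rm Ext}^1_{\mathcal{C}}(X,K)$ with $K \in \Omega^{-r}(Y)$, check $K \in (\mathcal{P}_r(\mathcal{C}))^\perp$, and conclude $C$ is projective (a summand of a projective, since ${\rm Ext}^1_{\mathcal{C}}(C,-)$ vanishes). This is sound, and it actually buys more generality: it needs only enough projectives and enough injectives, not the Gorenstein hypothesis or any of the machinery $(\mathcal{GP}_0(\mathcal{C}),\mathcal{W}(\mathcal{C}))$, $(\mathcal{W}(\mathcal{C}),\mathcal{GI}_0(\mathcal{C}))$. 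What the paper's argument buys is brevity and a conceptual point --- it exhibits the corollary as a formal consequence of the compatibility data $\mathcal{P}_r = \mathcal{GP}_r \cap \mathcal{W}$ that the section is building toward, which is exactly why the author calls it ``a simpler proof in the context of Gorenstein categories.''
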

\begin{proof}
We only need to show $\mbox{}^\perp((\mathcal{P}_r(\mathcal{C}))^\perp) \subseteq \mathcal{P}_r(\mathcal{C})$, which follows by the implications $\mathcal{P}_r(\mathcal{C}) \subseteq \mathcal{GP}_r(\mathcal{C}) \Longrightarrow \mbox{}^\perp((\mathcal{P}_r(\mathcal{C}))^\perp) \subseteq \mbox{}^\perp((\mathcal{GP}_r(\mathcal{C}))^\perp) = \mathcal{GP}_r(\mathcal{C})$ and $\mathcal{P}_r(\mathcal{C}) \subseteq \mathcal{W}(\mathcal{C}) \Longrightarrow \mbox{}^\perp((\mathcal{P}_r(\mathcal{C}))^\perp) \subseteq \mbox{}^\perp((\mathcal{W}(\mathcal{C}))^\perp) = \mathcal{W}(\mathcal{C})$.
\end{proof}

\newpage

In order to prove that $(\mathcal{GP}_r(\mathcal{C}), (\mathcal{GP}_r(\mathcal{C}))^\perp)$ and $(\mathcal{P}_r(\mathcal{C}), (\mathcal{P}_r(\mathcal{C}))^\perp)$ are compatible, by Proposition \ref{coroproj} it suffices to show $(\mathcal{GP}_r(\mathcal{C}))^\perp = \mathcal{W}(\mathcal{C}) \cap (\mathcal{P}_r(\mathcal{C}))^\perp$. Moreover, we shall see that in the cases $\mathcal{C} = \Modl$ or $\mathcal{C} = \Cadl$, these pairs are also complete. 

\vspace{0.5cm}

\begin{proposition}\label{igualdadesperp} The equality $(\mathcal{GP}_r(\mathcal{C}))^\perp = (\mathcal{P}_r(\mathcal{C}))^\perp \cap \mathcal{W}(\mathcal{C})$ holds in every Gorenstein category $\mathcal{C}$ and for every $0 < r \leq FDI(\mathcal{C})$. 
\end{proposition}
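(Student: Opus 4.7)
The plan is to prove the two inclusions separately. The easy one, $(\mathcal{GP}_r(\mathcal{C}))^\perp \subseteq (\mathcal{P}_r(\mathcal{C}))^\perp \cap \mathcal{W}(\mathcal{C})$, follows from two trivial containments at the level of left halves: $\mathcal{P}_r(\mathcal{C}) \subseteq \mathcal{GP}_r(\mathcal{C})$ yields $(\mathcal{GP}_r(\mathcal{C}))^\perp \subseteq (\mathcal{P}_r(\mathcal{C}))^\perp$, while $\mathcal{GP}_0(\mathcal{C}) \subseteq \mathcal{GP}_r(\mathcal{C})$, combined with the equality $(\mathcal{GP}_0(\mathcal{C}))^\perp = \mathcal{W}(\mathcal{C})$ from \cite[Theorem 2.25]{Estrada}, gives $(\mathcal{GP}_r(\mathcal{C}))^\perp \subseteq \mathcal{W}(\mathcal{C})$.

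For the reverse inclusion, I would pick $Y \in (\mathcal{P}_r(\mathcal{C}))^\perp \cap \mathcal{W}(\mathcal{C})$ and an arbitrary $X \in \mathcal{GP}_r(\mathcal{C})$, with the goal of showing ${\rm Ext}^1_{\mathcal{C}}(X, Y) = 0$. The first step is to invoke the completeness of the cotorsion pair $(\mathcal{GP}_0(\mathcal{C}), \mathcal{W}(\mathcal{C}))$ to produce a short exact sequence
$$ 0 \longrightarrow X \longrightarrow L \longrightarrow G \longrightarrow 0 $$
with $L \in \mathcal{W}(\mathcal{C})$ and $G \in \mathcal{GP}_0(\mathcal{C})$. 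The crucial observation is that $L$ must actually lie in $\mathcal{P}_r(\mathcal{C})$: since $\mathcal{GP}_r(\mathcal{C})$ is closed under extensions (being the left half of a cotorsion pair, as established in the preceding proposition) and contains both $X$ and $G \in \mathcal{GP}_0(\mathcal{C}) \subseteq \mathcal{GP}_r(\mathcal{C})$, we deduce $L \in \mathcal{GP}_r(\mathcal{C})$; combined with $L \in \mathcal{W}(\mathcal{C})$, Proposition~\ref{coroproj}~{\bf (2)} then forces $L \in \mathcal{P}_r(\mathcal{C})$.

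The proof finishes by applying ${\rm Hom}_{\mathcal{C}}(-,Y)$ to the displayed sequence and extracting the fragment
$$ {\rm Ext}^1_{\mathcal{C}}(L, Y) \longrightarrow {\rm Ext}^1_{\mathcal{C}}(X, Y) \longrightarrow {\rm Ext}^2_{\mathcal{C}}(G, Y). $$
Both outer terms vanish: the first because $L \in \mathcal{P}_r(\mathcal{C})$ and $Y \in (\mathcal{P}_r(\mathcal{C}))^\perp$, and the second by \cite[Proposition 11.5.7]{EJ} applied with $n = 0$, which gives ${\rm Ext}^i_{\mathcal{C}}(G, Y) = 0$ for every $i \geq 1$ whenever $G$ is Gorenstein-projective and $Y$ has finite projective dimension. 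Hence ${\rm Ext}^1_{\mathcal{C}}(X, Y) = 0$, as required. The main subtlety is orientational: one must use the special $\mathcal{W}(\mathcal{C})$-preenvelope of $X$, i.e.\ the sequence $0 \to X \to L \to G \to 0$ rather than the special $\mathcal{GP}_0(\mathcal{C})$-precover $0 \to L' \to G' \to X \to 0$, so that the resulting long exact Ext sequence flanks ${\rm Ext}^1(X,Y)$ between a term killed by the $(\mathcal{P}_r)^\perp$-hypothesis on $Y$ and a term killed by the Gorenstein-projective vanishing against $\mathcal{W}(\mathcal{C})$.
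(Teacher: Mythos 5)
Your proof is correct and follows essentially the same route as the paper: both inclusions are handled identically, with the reverse one obtained by embedding $X$ into its special $\mathcal{W}(\mathcal{C})$-preenvelope $0 \to X \to W \to C \to 0$ coming from completeness of $(\mathcal{GP}_0(\mathcal{C}), \mathcal{W}(\mathcal{C}))$, noting $W \in \mathcal{GP}_r(\mathcal{C}) \cap \mathcal{W}(\mathcal{C}) = \mathcal{P}_r(\mathcal{C})$, and squeezing ${\rm Ext}^1_{\mathcal{C}}(X,Y)$ between ${\rm Ext}^1_{\mathcal{C}}(W,Y)$ and ${\rm Ext}^2_{\mathcal{C}}(C,Y)$. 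Your explicit justification that $W \in \mathcal{GP}_r(\mathcal{C})$ via extension-closure of the left half of a cotorsion pair fills in a step the paper merely asserts, and the only other difference is the cosmetic choice of reference for the vanishing of ${\rm Ext}^2_{\mathcal{C}}(C,Y)$.
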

\begin{proof}
The inclusion $(\mathcal{GP}_r(\mathcal{C}))^\perp \subseteq (\mathcal{P}_r(\mathcal{C}))^\perp \cap \mathcal{W}(\mathcal{C})$ follows as in the previous corollary. Now let $Y \in (\mathcal{P}_r(\mathcal{C}))^\perp \cap \mathcal{W}(\mathcal{C})$ and $X \in \mathcal{GP}_r(\mathcal{C})$. Since $(\mathcal{GP}_0(\mathcal{C}), \mathcal{W}(\mathcal{C}))$ is complete, there exists a short exact sequence $0 \longrightarrow X \longrightarrow W \longrightarrow C \longrightarrow 0$ with $C \in \mathcal{GP}_0(\mathcal{C})$ and $W \in \mathcal{W}(\mathcal{C})$. Note $W$ also belongs to $\mathcal{GP}_r(\mathcal{C})$, and thus $W \in \mathcal{P}_r(\mathcal{C})$. On the other hand, we have a long exact sequence of derived functors $\cdots \longrightarrow {\rm Ext}^1_{\mathcal{C}}(W, Y) \longrightarrow {\rm Ext}^1_{\mathcal{C}}(X, Y) \longrightarrow {\rm Ext}^2_{\mathcal{C}}(C, Y) \longrightarrow \cdots$, where ${\rm Ext}^1_{\mathcal{C}}(W, Y) = 0$ since $W \in \mathcal{P}_r(\mathcal{C})$, and ${\rm Ext}^2_{\mathcal{C}}(C, Y) = 0$ by \cite[Proposition 10.2.6]{EJ}. Therefore, $Y \in (\mathcal{GP}_r(\mathcal{C}))^\perp$. 
\end{proof}

\vspace{0.5cm}

\begin{corollary}\label{Gorprojcomplete} If $R$ is an $n$-Gorenstein ring, then $(\mathcal{GP}_r, (\mathcal{GP}_r)^\perp)$ is a complete cotorsion pair in $\Modl$. 
\end{corollary}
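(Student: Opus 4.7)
The plan is to apply the Eklof--Trlifaj theorem by exhibiting a set $\mathcal{S} \subseteq \mathcal{GP}_r$ with $\mathcal{S}^\perp = (\mathcal{GP}_r)^\perp$. The key enabling ingredient will be Proposition \ref{igualdadesperp}, which identifies $(\mathcal{GP}_r)^\perp$ with $\mathcal{W} \cap (\mathcal{P}_r)^\perp$; this factors the problem into two pieces each already solved in the literature. Namely, by \cite[Theorem 8.3]{Hovey} (recalled just before Proposition \ref{hoveygorinjcat}), one has $\mathcal{W} = \mathcal{T}^\perp$, where $\mathcal{T}$ is the set of $n$-th syzygies appearing in $\Omega^n(R/I)$ as $I$ ranges over the left ideals of $R$; and by \cite[Theorem 7.4.6]{EJ}, $(\mathcal{P}_r)^\perp = (\mathcal{T}')^\perp$ for a set $\mathcal{T}' \subseteq \mathcal{P}_r$ consisting of all $r$-projective modules of cardinality at most $\kappa$, for a fixed infinite cardinal $\kappa > \mathrm{Card}(R)$.

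The natural candidate is then $\mathcal{S} := \mathcal{T} \cup \mathcal{T}'$. First I would check that $\mathcal{S} \subseteq \mathcal{GP}_r$: every element of $\mathcal{T}$ is an $n$-th syzygy over an $n$-Gorenstein ring, hence lies in $\mathcal{GP}_0 \subseteq \mathcal{GP}_r$, and every element of $\mathcal{T}'$ lies in $\mathcal{P}_r \subseteq \mathcal{GP}_r$. The inclusion $(\mathcal{GP}_r)^\perp \subseteq \mathcal{S}^\perp$ is automatic since $\mathcal{S} \subseteq \mathcal{GP}_r$. For the converse, if $Y \in \mathcal{S}^\perp = \mathcal{T}^\perp \cap (\mathcal{T}')^\perp$, then $Y \in \mathcal{W} \cap (\mathcal{P}_r)^\perp$, which by Proposition \ref{igualdadesperp} coincides with $(\mathcal{GP}_r)^\perp$. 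Thus $\mathcal{S}$ is a cogenerating set for $(\mathcal{GP}_r, (\mathcal{GP}_r)^\perp)$, and the Eklof--Trlifaj theorem recalled in the Preliminaries yields completeness.

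The main conceptual difficulty does not lie in this corollary itself --- once Proposition \ref{igualdadesperp} is available, the argument is a short assembly of known facts. The real content sits in that proposition together with the existence of cogenerating sets for the two flanking pairs $(\mathcal{GP}_0, \mathcal{W})$ and $(\mathcal{P}_r, (\mathcal{P}_r)^\perp)$; here they are glued by taking the union, exploiting the elementary fact that the orthogonal of a union is the intersection of the orthogonals, which matches precisely the right-hand side of Proposition \ref{igualdadesperp}.
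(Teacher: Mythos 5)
Your argument is correct and is essentially the paper's own: the paper likewise combines the cogenerating sets of $(\mathcal{GP}_0,\mathcal{W})$ from \cite[Theorem 8.3]{Hovey} and of $(\mathcal{P}_r,(\mathcal{P}_r)^\perp)$ from \cite[Theorem 7.4.6]{EJ} with the compatibility given by Propositions \ref{coroproj} and \ref{igualdadesperp}, merely delegating the union-of-cogenerating-sets step (which you carry out explicitly via $\mathcal{S}=\mathcal{T}\cup\mathcal{T}'$ and $\mathcal{S}^\perp=\mathcal{T}^\perp\cap(\mathcal{T}')^\perp$) to \cite[Proposition 4.1]{Perez} before invoking Eklof--Trlifaj.
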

\begin{proof} On the one hand, $(\mathcal{GI}_0, \mathcal{W})$ and $(\mathcal{P}_r,(\mathcal{P}_r)^\perp)$ are cogenerated by a sets according to \cite[Theorem 8.3]{Hovey} and \cite[Theorem 7.4.6]{EJ}, respectively. On the other hand, the pairs $(\mathcal{GP}_r,(\mathcal{GP}_r)^\perp)$ and $(\mathcal{P}_r,(\mathcal{P}_r)^\perp)$ are compatible by Propositions \ref{coroproj} and \ref{igualdadesperp}. The completeness of $(\mathcal{GP}_r, (\mathcal{GP}_r)^\perp)$ follows by \cite[Proposition 4.1]{Perez}. 
\end{proof}

\vspace{0.25cm}

Applying Hovey's correspondence, we obtain the following new model structure on $\Modl$. 

\vspace{0.5cm}

\begin{theorem}\label{Gorrprojmodel} If $R$ is an $n$-Gorenstein ring, then for each $0 < r \leq n$ the two compatible and complete cotorsion pairs \[ (\mathcal{GP}_r \cap \mathcal{W}, (\mathcal{P}_r)^\perp) \mbox{ \ and \ } (\mathcal{GP}_r, (\mathcal{P}_r)^\perp \cap \mathcal{W}) \] give rise to a unique Abelian model structure on $\Modl$, called \underline{Gorenstein-$r$-projective} \underline{model structure}, where $\mathcal{GP}_r$, $(\mathcal{P}_r)^\perp$ and $\mathcal{W}$ are the classes of cofibrant, fibrant and trivial objects, respectively. 
\end{theorem}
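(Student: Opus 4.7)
The plan is to invoke Hovey's correspondence (the theorem of M. Hovey stated at the beginning of the paper), so the task reduces to verifying its three hypotheses: (i) the two displayed pairs are genuinely (complete) cotorsion pairs, (ii) they are compatible in the sense $(\mathcal{A}, \mathcal{B}\cap\mathcal{D})$ and $(\mathcal{A}\cap\mathcal{D}, \mathcal{B})$ with $\mathcal{A} = \mathcal{GP}_r$, $\mathcal{B} = (\mathcal{P}_r)^\perp$, $\mathcal{D} = \mathcal{W}$, and (iii) the class $\mathcal{D} = \mathcal{W}$ is thick. The thickness of $\mathcal{W}$ over a Gorenstein ring has already been recorded in the preliminaries, so (iii) is free.

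For (i) and (ii), the key identifications are supplied by the two results just proved. First, Proposition \ref{coroproj}(2) gives $\mathcal{GP}_r \cap \mathcal{W} = \mathcal{P}_r$, so the pair $(\mathcal{GP}_r \cap \mathcal{W}, (\mathcal{P}_r)^\perp)$ is nothing but $(\mathcal{P}_r, (\mathcal{P}_r)^\perp)$; this is a complete cotorsion pair by \cite[Theorem 7.4.6]{EJ}. Second, Proposition \ref{igualdadesperp} gives $(\mathcal{P}_r)^\perp \cap \mathcal{W} = (\mathcal{GP}_r)^\perp$, so the pair $(\mathcal{GP}_r, (\mathcal{P}_r)^\perp \cap \mathcal{W})$ is exactly $(\mathcal{GP}_r, (\mathcal{GP}_r)^\perp)$, which is a complete cotorsion pair by Corollary \ref{Gorprojcomplete}. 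Together these two identifications also make the compatibility in Hovey's sense automatic, since the intersection conditions required by Hovey's formulation coincide term-by-term with the classes defining the two pairs.

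With (i)--(iii) in hand, Hovey's correspondence (the converse direction of the theorem quoted at the start) produces a unique Abelian model structure on $\Modl$ in which $\mathcal{GP}_r$ is the class of cofibrant objects, $(\mathcal{P}_r)^\perp$ is the class of fibrant objects, and $\mathcal{W}$ is the class of trivial objects. Uniqueness is part of the statement of Hovey's theorem, so nothing more needs to be checked.

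I expect no serious obstacle in this proof: the heavy lifting has all been done in the preceding propositions (identifying the intersections $\mathcal{GP}_r \cap \mathcal{W}$ and $(\mathcal{P}_r)^\perp \cap \mathcal{W}$, and producing cogenerating sets to get completeness of $(\mathcal{GP}_r,(\mathcal{GP}_r)^\perp)$ via \cite[Proposition 4.1]{Perez}). The one point worth stating carefully is that Hovey's hypotheses require the \emph{same} class $\mathcal{D}$ in both pairs; since $\mathcal{W}$ is intrinsic to the Gorenstein ring $R$ and appears identically on both sides, this is immediate. Thus the proof will essentially be a one-paragraph assembly: cite the thickness of $\mathcal{W}$, rewrite the two pairs using Propositions \ref{coroproj} and \ref{igualdadesperp}, invoke Corollary \ref{Gorprojcomplete} and \cite[Theorem 7.4.6]{EJ} for completeness, and apply Hovey's correspondence.
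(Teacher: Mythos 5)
Your proposal is correct and follows exactly the paper's route: the paper obtains this theorem by simply applying Hovey's correspondence to the identifications $\mathcal{GP}_r \cap \mathcal{W} = \mathcal{P}_r$ (Proposition \ref{coroproj}) and $(\mathcal{P}_r)^\perp \cap \mathcal{W} = (\mathcal{GP}_r)^\perp$ (Proposition \ref{igualdadesperp}), together with completeness from Corollary \ref{Gorprojcomplete} and \cite[Theorem 7.4.6]{EJ} and the thickness of $\mathcal{W}$. Only a small slip: the direction of Hovey's theorem you invoke is the forward implication (compatible complete pairs with thick $\mathcal{D}$ yield the model structure), not the converse.
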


\vspace{0.5cm}

In \cite[Corollary 4.1]{Perez}, it is proven that $( \widetilde{\mathcal{P}_r}, ( \widetilde{\mathcal{P}_r} )\mbox{}^\perp )$ is a cotorsion pair cogenerated by the set $(\widetilde{\mathcal{P}_r})^{\leq \kappa} := \{ X \in \widetilde{\mathcal{P}_r} \mbox{ : } {\rm Card}(X) \leq \kappa \}$, with $\kappa$ an infinite cardinal satisfying $\kappa > {\rm Card}(R)$. If we denote $\widehat{\mathcal{GP}_r} = \mathcal{GP}_r(\Cadl)$, the following result can be proven as in $\Modl$. 

\newpage

\begin{corollary} If $R$ is an $n$-Gorenstein ring, then $( \widehat{\mathcal{GP}_r}, ( \widehat{\mathcal{GP}_r} )\mbox{}^\perp )$ is a complete cotorsion pair in $\Cadl$. 
\end{corollary}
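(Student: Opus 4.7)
The plan is to transcribe the proof of Corollary \ref{Gorprojcomplete} into $\Cadl$, using the fact (established earlier as an example) that $\Cadl$ is itself a Gorenstein category whenever $R$ is $n$-Gorenstein. Thus every general statement proven in Section 4 for an arbitrary Gorenstein category $\mathcal{C}$ applies with $\mathcal{C}=\Cadl$. In particular, the proposition asserting that $\mathcal{GP}_r(\mathcal{C})$ is the left half of a cotorsion pair produces the pair $(\widehat{\mathcal{GP}_r},(\widehat{\mathcal{GP}_r})^\perp)$, while Proposition \ref{coroproj} yields $\widehat{\mathcal{GP}_r}\cap\widetilde{\mathcal{W}}=\widetilde{\mathcal{P}_r}$ and Proposition \ref{igualdadesperp} yields $(\widehat{\mathcal{GP}_r})^\perp=(\widetilde{\mathcal{P}_r})^\perp\cap\widetilde{\mathcal{W}}$. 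These two equalities are exactly the compatibility of the cotorsion pairs $(\widehat{\mathcal{GP}_r},(\widehat{\mathcal{GP}_r})^\perp)$ and $(\widetilde{\mathcal{P}_r},(\widetilde{\mathcal{P}_r})^\perp)$.

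For the second ingredient I would supply cogenerating sets for both pairs. The cotorsion pair $(\widehat{\mathcal{GP}_0},\widetilde{\mathcal{W}})$ is cogenerated by the set $\mathcal{X}$ displayed in the remark following Proposition \ref{hoveygorinjcat}, built from the $n$-syzygies in $\Omega^n(S^m(R/I))$ together with the shifts $\Sigma^k S^0(R)$. The cotorsion pair $(\widetilde{\mathcal{P}_r},(\widetilde{\mathcal{P}_r})^\perp)$ is cogenerated by the set $(\widetilde{\mathcal{P}_r})^{\leq\kappa}$ of complexes of projective dimension $\leq r$ and cardinality $\leq\kappa$, for any infinite cardinal $\kappa>{\rm Card}(R)$, as recorded in the paragraph preceding the corollary and originally proven in \cite[Corollary 4.1]{Perez}.

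With compatibility established and both partner pairs cogenerated by sets, the completeness of $(\widehat{\mathcal{GP}_r},(\widehat{\mathcal{GP}_r})^\perp)$ follows from \cite[Proposition 4.1]{Perez}, exactly as in Corollary \ref{Gorprojcomplete}. The only point I would check carefully is that \cite[Proposition 4.1]{Perez} really does apply in $\Cadl$ and not just in $\Modl$; since its argument runs through the Eklof--Trlifaj theorem, which is valid in every Grothendieck category (as already invoked earlier in the paper), no genuine obstacle arises, and the transcription is essentially mechanical.
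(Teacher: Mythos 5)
Your proposal is correct and follows essentially the same route as the paper: the paper's proof is precisely the transcription of Corollary \ref{Gorprojcomplete} to $\Cadl$, using that $\Cadl$ is a Gorenstein category, the cogenerating sets $\mathcal{X}$ for $(\widehat{\mathcal{GP}_0},\widetilde{\mathcal{W}})$ and $(\widetilde{\mathcal{P}_r})^{\leq\kappa}$ for $(\widetilde{\mathcal{P}_r},(\widetilde{\mathcal{P}_r})^\perp)$, compatibility via Propositions \ref{coroproj} and \ref{igualdadesperp}, and then \cite[Proposition 4.1]{Perez}. Your closing remark that the Eklof--Trlifaj machinery is valid in any Grothendieck category is exactly the justification the paper relies on implicitly.
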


\vspace{0.25cm}

By the completeness of $( \widetilde{\mathcal{P}_r}, ( \widetilde{\mathcal{P}_r} )\mbox{}^\perp )$ and $( \widehat{\mathcal{GP}_r}, ( \widehat{\mathcal{GP}_r} )\mbox{}^\perp )$, along with Propositions \ref{coroproj} and \ref{igualdadesperp}, we obtain the analogue of Theorem \ref{Gorrprojmodel} for $\Cadl$. 

\vspace{0.5cm}

\begin{theorem} If $R$ is an $n$-Gorenstein ring, then for each $0 < r \leq n$ the two compatible and complete cotorsion pairs \[ (\widehat{\mathcal{GP}_r} \cap \widetilde{\mathcal{W}}, (\widetilde{\mathcal{P}_r})^\perp) \mbox{ \ and \ } (\widehat{\mathcal{GP}_r}, (\widetilde{\mathcal{P}_r})^\perp \cap \widetilde{\mathcal{W}}) \] give rise to a unique Abelian model structure on $\Cadl$, where $\widehat{\mathcal{GP}_r}$, $(\widetilde{\mathcal{P}_r})^\perp$ and $\widetilde{\mathcal{W}}$ are the classes of cofibrant, fibrant and trivial objects, respectively. 
\end{theorem}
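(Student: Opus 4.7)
The plan is to mirror the proof of Theorem \ref{Gorrprojmodel} with $\Cadl$ replacing $\Modl$, by applying Hovey's correspondence (Theorem 1.1) to the classes $\mathcal{A} = \widehat{\mathcal{GP}_r}$, $\mathcal{B} = (\widetilde{\mathcal{P}_r})^\perp$, and $\mathcal{D} = \widetilde{\mathcal{W}}$. Three ingredients must be assembled: (i) both displayed pairs are complete cotorsion pairs, (ii) they are compatible in Hovey's sense, and (iii) $\widetilde{\mathcal{W}}$ is thick.

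First I would verify that the two pairs stated in the theorem coincide with the two complete cotorsion pairs that are already in hand. Since $R$ is $n$-Gorenstein, $\Cadl$ is a Gorenstein category with $FDI(\Cadl) = n$, so Propositions \ref{coroproj} and \ref{igualdadesperp} apply directly to $\Cadl$. Under the identifications $\mathcal{W}(\Cadl) = \widetilde{\mathcal{W}}$, $\mathcal{P}_r(\Cadl) = \widetilde{\mathcal{P}_r}$ and $\mathcal{GP}_r(\Cadl) = \widehat{\mathcal{GP}_r}$ coming from \cite[Proposition 3.1]{Perez} (and Proposition \ref{Gorcomplexes} for the Gorenstein-projective case), these propositions translate into the equalities $\widehat{\mathcal{GP}_r} \cap \widetilde{\mathcal{W}} = \widetilde{\mathcal{P}_r}$ and $(\widehat{\mathcal{GP}_r})^\perp = (\widetilde{\mathcal{P}_r})^\perp \cap \widetilde{\mathcal{W}}$. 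Consequently the first displayed pair is exactly $(\widetilde{\mathcal{P}_r}, (\widetilde{\mathcal{P}_r})^\perp)$, which is complete by \cite[Corollary 4.1]{Perez}, and the second is exactly $(\widehat{\mathcal{GP}_r}, (\widehat{\mathcal{GP}_r})^\perp)$, complete by the preceding corollary. Compatibility in Hovey's sense is then automatic, since both pairs now manifestly have the shape $(\mathcal{A}\cap \mathcal{D}, \mathcal{B})$ and $(\mathcal{A}, \mathcal{B}\cap \mathcal{D})$.

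For thickness of $\widetilde{\mathcal{W}}$, the remark made just after Definition \ref{defGor} applies in any Gorenstein category: the class of objects of finite projective dimension is always thick. Applied to the Gorenstein category $\Cadl$, this gives thickness of $\widetilde{\mathcal{W}} = \mathcal{W}(\Cadl)$. With (i)–(iii) in place, Hovey's correspondence immediately produces a unique Abelian model structure on $\Cadl$ in which $\widehat{\mathcal{GP}_r}$, $(\widetilde{\mathcal{P}_r})^\perp$ and $\widetilde{\mathcal{W}}$ are the classes of cofibrant, fibrant and trivial objects, respectively.

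The main obstacle is purely bookkeeping: one must confirm that the abstract statements of Propositions \ref{coroproj} and \ref{igualdadesperp} transport faithfully to $\Cadl$, which boils down to ensuring that $FDI(\Cadl) = n$ (so the bound $r \leq n$ is the correct hypothesis) and that the three relevant classes correspond to the tilde-classes of chain complexes. Both are handled cleanly by the examples in Section 3 and by \cite[Proposition 3.1]{Perez}, so no genuinely new homological argument beyond the module case is needed.
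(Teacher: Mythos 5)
Your proposal follows the paper's own argument exactly: it identifies the two displayed pairs with $(\widetilde{\mathcal{P}_r}, (\widetilde{\mathcal{P}_r})^\perp)$ and $(\widehat{\mathcal{GP}_r}, (\widehat{\mathcal{GP}_r})^\perp)$ via Propositions \ref{coroproj} and \ref{igualdadesperp} applied to the Gorenstein category $\Cadl$, invokes the completeness results already established (\cite[Corollary 4.1]{Perez} and the corollary preceding the theorem), notes the thickness of $\widetilde{\mathcal{W}}$, and concludes by Hovey's correspondence. This is precisely the chain of reasoning the paper compresses into the sentence preceding the theorem, so there is nothing to fault.
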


\vspace{0.25cm}

It is time to construct a model structure from the notion of Gorenstein-injective dimension. The advantage in this case is that our results can be presented for every Gorenstein category, not only modules and complexes over Gorenstein rings. As we noted for the class $\mathcal{GP}_0(\mathcal{C})$, it makes sense to consider right $r$-$\mathcal{GI}_0(\mathcal{C})$-objects in any Gorenstein category $\mathcal{C}$. 

\vspace{0.5cm}

\begin{definition} We say that an object $X$ of a Gorenstein category $\mathcal{C}$ is \underline{Gorenstein-$r$-injective} if it is a right $r$-$\mathcal{GI}_0(\mathcal{C})$-object. We denote by $\mathcal{GI}_r(\mathcal{C})$ the class of Gorenstein-$r$-injective objects of $\mathcal{C}$. Let ${\rm Gid}(X)$ denote the (right) Gorenstein-injective dimension of $X$. Notice the equality $\mathcal{GI}_r(\mathcal{C}) = \{ X \in {\rm Ob}(\mathcal{C}) \mbox{ : } {\rm Gid}(X) \leq r \}$. 
\end{definition}

\vspace{0.25cm}

As we did in the Gorenstein-projective case, we have the following results: 

\vspace{0.5cm}

\begin{proposition}\label{gorinjpairs} Let $\mathcal{C}$ be a Gorenstein category. The following conditions hold for every $0 < r \leq FDP(\mathcal{C})$. 
\begin{itemize}
\item[{\bf (1)}] ${\rm Gid}(X) \leq FDP(\mathcal{C})$, for every object $X$ of $\mathcal{C}$.

\item[{\bf (2)}] $\mathcal{GI}_r(\mathcal{C}) \cap \mathcal{W}(\mathcal{C}) = \mathcal{I}_r(\mathcal{C})$.

\item[{\bf (3)}] $(\mbox{}^\perp(\mathcal{GI}_r(\mathcal{C})), \mathcal{GI}_r(\mathcal{C}))$ is a cotorsion pair.

\item[{\bf (4)}] $\mbox{}^\perp(\mathcal{GI}_r(\mathcal{C})) = \mbox{}^\perp(\mathcal{I}_r(\mathcal{C})) \cap \mathcal{W}(\mathcal{C})$. 
\end{itemize} 
\end{proposition}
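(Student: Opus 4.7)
The plan is to mirror the four arguments on the Gorenstein-projective side (Proposition \ref{coroproj} and Proposition \ref{igualdadesperp}), dualizing projectives to injectives throughout. The backbone is the injective analogue of \cite[Proposition 11.5.7]{EJ}: in a Gorenstein category, ${\rm Gid}(X)\le r$ if and only if ${\rm Ext}^{r+1}_{\mathcal{C}}(W,X)=0$ for every $W\in\mathcal{W}(\mathcal{C})$.

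Parts (1) and (2) should then be quick. For (1), every $W\in\mathcal{W}(\mathcal{C})$ has ${\rm pd}(W)\le FDP(\mathcal{C})$, so the Ext above vanishes for every $X$. For (2), the inclusion $\mathcal{I}_r(\mathcal{C})\subseteq\mathcal{GI}_r(\mathcal{C})\cap\mathcal{W}(\mathcal{C})$ is immediate. Conversely, given $X\in\mathcal{GI}_r(\mathcal{C})\cap\mathcal{W}(\mathcal{C})$, I take a partial right injective coresolution $0\to X\to I^0\to\cdots\to I^{r-1}\to C\to 0$; then $C\in\mathcal{GI}_0(\mathcal{C})$, and thickness of $\mathcal{W}(\mathcal{C})$ forces $C\in\mathcal{W}(\mathcal{C})$, so $C\in\mathcal{GI}_0(\mathcal{C})\cap\mathcal{W}(\mathcal{C})=\mathcal{I}_0(\mathcal{C})$ by the injective analogue of \cite[Corollary 8.5]{Hovey}, whence $X\in\mathcal{I}_r(\mathcal{C})$.

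The substantive piece is (3). Given $X\in(\mbox{}^\perp(\mathcal{GI}_r(\mathcal{C})))^\perp$, I form a partial injective coresolution $0\to X\to I^0\to\cdots\to I^{r-1}\to C\to 0$ and aim to show $C\in\mathcal{GI}_0(\mathcal{C})$, so that $X\in\mathcal{GI}_r(\mathcal{C})$ by the Ext-characterization. Double dimension shifting---on the right through the coresolution of $X$, and on the left through a projective resolution of $W\in\mathcal{W}(\mathcal{C})$---yields ${\rm Ext}^1_{\mathcal{C}}(W,C)\cong{\rm Ext}^{r+1}_{\mathcal{C}}(W,X)\cong{\rm Ext}^1_{\mathcal{C}}(K,X)$ for $K\in\Omega^r(W)$. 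The step I expect to be the main obstacle is proving $K\in\mbox{}^\perp(\mathcal{GI}_r(\mathcal{C}))$: for $Y\in\mathcal{GI}_r(\mathcal{C})$, I will dimension shift $Y$ as well, using a partial injective coresolution $0\to Y\to J^0\to\cdots\to J^{r-1}\to Y'\to 0$ with $Y'\in\mathcal{GI}_0(\mathcal{C})=(\mathcal{W}(\mathcal{C}))^\perp$ (from \cite[Theorem 2.24]{Estrada}), giving ${\rm Ext}^1_{\mathcal{C}}(K,Y)\cong{\rm Ext}^{r+1}_{\mathcal{C}}(W,Y)\cong{\rm Ext}^1_{\mathcal{C}}(W,Y')=0$. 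Combined with the hypothesis on $X$, this forces ${\rm Ext}^1_{\mathcal{C}}(W,C)=0$.

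For (4), the inclusion $\mbox{}^\perp(\mathcal{GI}_r(\mathcal{C}))\subseteq\mbox{}^\perp(\mathcal{I}_r(\mathcal{C}))\cap\mathcal{W}(\mathcal{C})$ is formal from $\mathcal{I}_r(\mathcal{C})\subseteq\mathcal{GI}_r(\mathcal{C})$ and $\mathcal{GI}_0(\mathcal{C})\subseteq\mathcal{GI}_r(\mathcal{C})$ combined with the cotorsion pair $(\mathcal{W}(\mathcal{C}),\mathcal{GI}_0(\mathcal{C}))$. For the reverse, given $Y\in\mbox{}^\perp(\mathcal{I}_r(\mathcal{C}))\cap\mathcal{W}(\mathcal{C})$ and $X\in\mathcal{GI}_r(\mathcal{C})$, completeness of $(\mathcal{W}(\mathcal{C}),\mathcal{GI}_0(\mathcal{C}))$ produces $0\to G\to W\to X\to 0$ with $W\in\mathcal{W}(\mathcal{C})$ and $G\in\mathcal{GI}_0(\mathcal{C})$; extension closure of $\mathcal{GI}_r(\mathcal{C})$ (from (3)) lands $W\in\mathcal{GI}_r(\mathcal{C})\cap\mathcal{W}(\mathcal{C})=\mathcal{I}_r(\mathcal{C})$ by (2). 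The long exact sequence of ${\rm Ext}^\ast_{\mathcal{C}}(Y,-)$ then reduces ${\rm Ext}^1_{\mathcal{C}}(Y,X)$ to ${\rm Ext}^1_{\mathcal{C}}(Y,W)=0$ (since $Y\in\mbox{}^\perp(\mathcal{I}_r(\mathcal{C}))$) and ${\rm Ext}^2_{\mathcal{C}}(Y,G)$, which vanishes by shifting $Y$ through a projective resolution and using $G\in(\mathcal{W}(\mathcal{C}))^\perp$.
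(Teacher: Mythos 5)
Your proof is correct and is exactly the dualization the paper calls for --- the paper gives no separate argument here, simply saying ``As we did in the Gorenstein-projective case,'' so your task was precisely to dualize Propositions \ref{coroproj} and \ref{igualdadesperp} and the intervening cotorsion-pair proposition. You do this faithfully: (1) and (2) are straightforward duals; (3) mirrors the paper's dimension-shifting argument and even fills in the step the paper leaves as ``it is not hard to see that $K \in (\mathcal{GP}_r(\mathcal{C}))^\perp$'' with an explicit double shift; and (4) correctly dualizes the approximation sequence (replacing the special $\mathcal{GP}_0$-pre-cover $0 \to X \to W \to C \to 0$ with the special $\mathcal{W}$-pre-cover $0 \to G \to W \to X \to 0$ supplied by completeness of $(\mathcal{W}(\mathcal{C}), \mathcal{GI}_0(\mathcal{C}))$), uses extension-closure of $\mathcal{GI}_r(\mathcal{C})$ from (3) exactly as the paper uses extension-closure of $\mathcal{GP}_r(\mathcal{C})$, and replaces the citation of \cite[Proposition 10.2.6]{EJ} with an equivalent dimension-shifting argument using thickness of $\mathcal{W}(\mathcal{C})$.
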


\vspace{0.5cm}

\begin{theorem} Let $\mathcal{C}$ be a Gorenstein category. 
\begin{itemize}
\item[ {\bf (1)}] $(\mbox{}^\perp(\mathcal{GI}_r(\mathcal{C})), \mathcal{GI}_r(\mathcal{C}))$ is a complete cotorsion pair. 

\item[ {\bf (2)}] If in addition $\mathcal{C}$ is locally Noetherian, then $(\mbox{}^\perp(\mathcal{GI}_r(\mathcal{C})), \mathcal{GI}_r(\mathcal{C}))$ is cogenerated by the set $\mathcal{S}(r)$ of all $S \in \Omega^i(J)$ with $i \geq r$ and $J$ running over the set of all indecomposable injective objects. 
\end{itemize}
\end{theorem}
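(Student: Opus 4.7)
The cotorsion pair structure has already been set up in Proposition \ref{gorinjpairs}(3), so for both parts my task reduces to producing a cogenerating set; completeness then follows from the Eklof--Trlifaj theorem in the Grothendieck category $\mathcal{C}$. The heart of the argument is a single reduction: for any object $Y$ of $\mathcal{C}$, fix an injective coresolution $0 \to Y \to I^0 \to I^1 \to \cdots$ and write $\Sigma^r(Y)$ for its $r$-th cosyzygy. I claim $Y \in \mathcal{GI}_r(\mathcal{C})$ if and only if $\Sigma^r(Y) \in \mathcal{GI}_0(\mathcal{C})$. The backward direction is immediate because every injective object is Gorenstein-injective, so the exact sequence $0 \to Y \to I^0 \to \cdots \to I^{r-1} \to \Sigma^r(Y) \to 0$ displays a Gorenstein-injective coresolution of $Y$ of length $\leq r$. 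For the forward direction I would invoke the injective analogue of \cite[Proposition 11.5.7]{EJ}, which gives $Y \in \mathcal{GI}_r(\mathcal{C})$ iff ${\rm Ext}^{r+1}_{\mathcal{C}}(W,Y) = 0$ for every $W \in \mathcal{W}(\mathcal{C})$; dimension-shifting along the injective coresolution rewrites this as ${\rm Ext}^{1}_{\mathcal{C}}(W, \Sigma^r(Y)) = 0$ for every $W \in \mathcal{W}(\mathcal{C})$, and since $\mathcal{GI}_0(\mathcal{C}) = (\mathcal{W}(\mathcal{C}))^\perp$ by \cite[Theorem 2.24]{Estrada}, this forces $\Sigma^r(Y) \in \mathcal{GI}_0(\mathcal{C})$.

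I would then combine this reduction with a second dimension-shift in the first variable, fixing a projective resolution of each object of the cogenerating set: for any $S$ and any $Y$,
\[ {\rm Ext}^1_{\mathcal{C}}(\Omega^r(S), Y) \ \cong \ {\rm Ext}^{r+1}_{\mathcal{C}}(S, Y) \ \cong \ {\rm Ext}^{1}_{\mathcal{C}}(S, \Sigma^r(Y)). \]
For part (1), as noted in the discussion around \cite[Theorems 2.24 and 2.25]{Estrada}, the pair $(\mathcal{W}(\mathcal{C}), \mathcal{GI}_0(\mathcal{C}))$ is a small cotorsion pair and hence cogenerated by some set $\mathcal{S}_0$. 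Then $Y \in \mathcal{GI}_r(\mathcal{C})$ iff $\Sigma^r(Y) \in \mathcal{S}_0^\perp$ iff ${\rm Ext}^1_{\mathcal{C}}(\Omega^r(S), Y) = 0$ for every $S \in \mathcal{S}_0$, so $(\mbox{}^\perp\mathcal{GI}_r(\mathcal{C}), \mathcal{GI}_r(\mathcal{C}))$ is cogenerated by $\{\Omega^r(S) \mbox{ : } S \in \mathcal{S}_0\}$. For part (2), where $\mathcal{C}$ is locally Noetherian, I would specialize to the set $\mathcal{S} = \{\Omega^i(J) \mbox{ : } i \geq 0, \ J \mbox{ indecomposable injective}\}$ furnished by Proposition \ref{hoveygorinjcat}; choosing projective resolutions coherently so that $\Omega^r(\Omega^i(J)) = \Omega^{r+i}(J)$, the induced cogenerating set becomes precisely $\mathcal{S}(r) = \{\Omega^j(J) \mbox{ : } j \geq r, \ J \mbox{ indecomposable injective}\}$.

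The main obstacle, I expect, is not either dimension-shift, which are standard from splicing short exact sequences and applying ${\rm Hom}_{\mathcal{C}}(W, -)$ or ${\rm Hom}_{\mathcal{C}}(-, Y)$, but rather verifying the forward direction of the reduction $Y \in \mathcal{GI}_r(\mathcal{C}) \Rightarrow \Sigma^r(Y) \in \mathcal{GI}_0(\mathcal{C})$ in the generality of a Gorenstein category. In the module setting this is one half of \cite[Proposition 11.5.7]{EJ}, and I would adapt its proof to $\mathcal{C}$ using that $\mathcal{W}(\mathcal{C})$ is thick and $\mathcal{GI}_0(\mathcal{C}) = (\mathcal{W}(\mathcal{C}))^\perp$, the same two ingredients that fuel the parallel step in Proposition \ref{coroproj}.
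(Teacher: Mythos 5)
Your proposal is correct. For part (2) it is essentially the paper's own argument: both reduce to the $r$-th cosyzygy via the Gorenstein-injective analogue of \cite[Proposition 11.5.7]{EJ} (the paper cites \cite[Proposition 11.2.5]{EJ} for this), dimension-shift ${\rm Ext}^1_{\mathcal{C}}(S,D) \cong {\rm Ext}^1_{\mathcal{C}}(\Omega^r(S),Y)$, and invoke Proposition \ref{hoveygorinjcat} to land on $\mathcal{S}(r)$. For part (1), however, you take a genuinely different route. The paper does not produce a cogenerating set at all: it observes that $(\mbox{}^\perp(\mathcal{GI}_r(\mathcal{C})),\mathcal{GI}_r(\mathcal{C}))$ and $(\mbox{}^\perp(\mathcal{I}_r(\mathcal{C})),\mathcal{I}_r(\mathcal{C}))$ are compatible with respect to the thick class $\mathcal{W}(\mathcal{C})$ (Proposition \ref{gorinjpairs}) and then invokes \cite[Proposition 4.1]{Perez}, an abstract transfer principle that yields completeness from the compatibility, using that $(\mbox{}^\perp(\mathcal{W}(\mathcal{C})),\mathcal{W}(\mathcal{C}))$, $(\mathcal{W}(\mathcal{C}),(\mathcal{W}(\mathcal{C}))^\perp)$ and $(\mbox{}^\perp(\mathcal{I}_r(\mathcal{C})),\mathcal{I}_r(\mathcal{C}))$ are complete. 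You instead take a cogenerating set $\mathcal{S}_0$ for the small cotorsion pair $(\mathcal{W}(\mathcal{C}),\mathcal{GI}_0(\mathcal{C}))$ from \cite{Estrada} and push it through $r$ projective steps to obtain an explicit cogenerating set $\{\Omega^r(S) \mbox{ : } S \in \mathcal{S}_0\}$. Your approach gives slightly more — an explicit cogenerating set even outside the locally Noetherian case, and a single mechanism unifying both parts — at the cost of needing the smallness (not merely the completeness) of $(\mathcal{W}(\mathcal{C}),\mathcal{GI}_0(\mathcal{C}))$; the paper's route for part (1) is shorter, requires only completeness of the three auxiliary pairs, but gives completeness without exhibiting a cogenerating set. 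One small bookkeeping point worth stating if you write this up: cogeneration requires both $\mathcal{GI}_r(\mathcal{C}) = \{\Omega^r(S) \mbox{ : } S \in \mathcal{S}_0\}^\perp$, which you establish, and $\{\Omega^r(S) \mbox{ : } S \in \mathcal{S}_0\} \subseteq \mbox{}^\perp(\mathcal{GI}_r(\mathcal{C}))$, which then follows automatically because Proposition \ref{gorinjpairs}(3) already gives that the pair is a cotorsion pair.
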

\begin{proof} Part {\bf (1)} follows by Proposition \ref{gorinjpairs} and \cite[Proposition 4.1]{Perez}, since the pairs $(\mbox{}^\perp(\mathcal{W}(\mathcal{C})), \mathcal{W}(\mathcal{C}))$, $(\mathcal{W}(\mathcal{C}), (\mathcal{W}(\mathcal{C}))^\perp)$, and $(\mbox{}^\perp(\mathcal{I}_r(\mathcal{C})), \mathcal{I}_r(\mathcal{C}))$ are complete. 

Now suppose $\mathcal{C}$ is also locally Noetherian. We shall see that $\mathcal{GI}_r(\mathcal{C}) = (\mathcal{S}(r))^\perp$. First, we check $\mathcal{S}(r) \subseteq \mbox{}^\perp(\mathcal{GI}_r(\mathcal{C}))$. Let $S \in \mathcal{S}(r)$ and consider $Y \in \mathcal{GI}_r(\mathcal{C})$. Then $S \in \Omega^i(J)$, for some $i \geq r$ and some indecomposable injective object $J$. We have ${\rm Ext}^1_{\mathcal{C}}(S, Y) \cong {\rm Ext}^{i+1}_{\mathcal{C}}(J, Y) = 0$, since $J \in \mathcal{W}(\mathcal{C})$, $Y \in \mathcal{GI}_r(\mathcal{C})$ and $i +1 > r$. Now to prove $\mathcal{GI}_r(\mathcal{C}) = (\mathcal{S}(r))^\perp$, it suffices to show $(\mathcal{S}(r))^\perp \subseteq \mathcal{GI}_r(\mathcal{C})$. Let $Y \in (\mathcal{S}(r))^\perp$ and consider an exact partial right injective resolution of $Y$, say $0 \longrightarrow Y \longrightarrow I^0 \longrightarrow \cdots \longrightarrow I^{r-1} \longrightarrow D \longrightarrow 0$. By \cite[Proposition 11.2.5]{EJ}, we only need to prove $D$ is Gorenstein-injective, i.e. that ${\rm Ext}^1_{\mathcal{C}}(S, D) = 0$ for every $S \in \mathcal{S}$, where $\mathcal{S}$ is the set considered in Proposition \ref{hoveygorinjcat}. By Dimension Shifting, we have ${\rm Ext}^1_{\mathcal{C}}(S, D) \cong {\rm Ext}^{1+r}_{\mathcal{C}}(S, Y) \cong {\rm Ext}^1_{\mathcal{C}}(S', Y)$, where $S' \in \Omega^{r}(S)$. Since $S \in \Omega^i(J)$, for some $i \geq 0$ and some indecomposable injective object $J$, we have $S' \in \Omega^{r}(\Omega^{i}(J)) = \Omega^{r+i}(J)$ with $r+i \geq r$. It follows $S' \in \mathcal{S}(r)$ and ${\rm Ext}^{1}_{\mathcal{C}}(S',Y) = 0$. Therefore, ${\rm Ext}^1_{\mathcal{C}}(S,D) = 0$ for every $S \in \mathcal{S}$. 
\end{proof}

\vspace{0.25cm}

From the results above and Hovey's correspondence, we deduce the existence of the following model structure. 

\vspace{0.5cm}

\begin{theorem} Let $\mathcal{C}$ be a Gorenstein category. If $0 < r \leq FDP(\mathcal{C})$, then the two compatible and complete cotorsion pairs \[ (\mbox{}^\perp(\mathcal{I}_r(\mathcal{C})) \cap \mathcal{W}(\mathcal{C}), \mathcal{GI}_r(\mathcal{C})) \mbox{ \ and \ } (\mbox{}^\perp(\mathcal{I}_r(\mathcal{C})), \mathcal{GI}_r(\mathcal{C}) \cap \mathcal{W}(\mathcal{C})) \] give rise to a unique Abelian model structure on $\mathcal{C}$, called the \underline{Gorenstein-$r$-injective model structure}, such that $\mbox{}^\perp(\mathcal{I}_r(\mathcal{C}))$, $\mathcal{GI}_r(\mathcal{C})$ and $\mathcal{W}(\mathcal{C})$ are the classes of cofibrant, fibrant and trivial objects, respectively. 
\end{theorem}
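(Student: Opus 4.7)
The strategy is to invoke Hovey's correspondence with $\mathcal{A} = \mbox{}^\perp(\mathcal{I}_r(\mathcal{C}))$, $\mathcal{B} = \mathcal{GI}_r(\mathcal{C})$ and the thick class $\mathcal{D} = \mathcal{W}(\mathcal{C})$. The essential content is already packaged in Proposition \ref{gorinjpairs} together with the immediately preceding theorem establishing completeness of $(\mbox{}^\perp(\mathcal{GI}_r(\mathcal{C})), \mathcal{GI}_r(\mathcal{C}))$, so the proof reduces to identifying the two displayed pairs as the two pairs demanded by Hovey.

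First I would rewrite the two cotorsion pairs in the statement. By Proposition \ref{gorinjpairs}(4), $\mbox{}^\perp(\mathcal{I}_r(\mathcal{C})) \cap \mathcal{W}(\mathcal{C}) = \mbox{}^\perp(\mathcal{GI}_r(\mathcal{C}))$, so the left pair coincides with $(\mbox{}^\perp(\mathcal{GI}_r(\mathcal{C})), \mathcal{GI}_r(\mathcal{C}))$, which the preceding theorem shows is a complete cotorsion pair. By Proposition \ref{gorinjpairs}(2), $\mathcal{GI}_r(\mathcal{C}) \cap \mathcal{W}(\mathcal{C}) = \mathcal{I}_r(\mathcal{C})$, so the right pair coincides with $(\mbox{}^\perp(\mathcal{I}_r(\mathcal{C})), \mathcal{I}_r(\mathcal{C}))$, the classical cotorsion pair of objects of injective dimension $\leq r$. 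That this is indeed a cotorsion pair is already recorded in Proposition \ref{gorinjpairs}, and its completeness in a Grothendieck category with enough injectives is standard (the transfinite-extension argument of \cite[Theorem 7.4.6]{EJ} carries over, or one argues by a short induction on $r$ starting from the trivially complete pair $(\mbox{Ob}(\mathcal{C}), \mathcal{I}_0(\mathcal{C}))$).

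Finally I would check the remaining hypotheses of Hovey's correspondence and conclude. Compatibility amounts to the statement that the two displayed pairs have the shapes $(\mathcal{A}\cap\mathcal{D},\mathcal{B})$ and $(\mathcal{A},\mathcal{B}\cap\mathcal{D})$ for the common $\mathcal{A},\mathcal{B},\mathcal{D}$ chosen above, which is exactly the rewriting done in the previous paragraph. Thickness of $\mathcal{W}(\mathcal{C})$ was observed right after Definition \ref{defGor}. Hovey's correspondence then produces a unique abelian model structure on $\mathcal{C}$ whose classes of cofibrant, fibrant and trivial objects are $\mbox{}^\perp(\mathcal{I}_r(\mathcal{C}))$, $\mathcal{GI}_r(\mathcal{C})$ and $\mathcal{W}(\mathcal{C})$, respectively. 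Because every ingredient is already in place, there is no genuine obstacle; the deepest input used behind the scenes is the cogenerating set $\mathcal{S}(r)$ exhibited in the previous theorem, which is what secures completeness of the left pair and thus enables the whole application of Hovey's correspondence.
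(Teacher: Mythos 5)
Your proof is correct and matches the paper's implicit argument exactly: the theorem is Hovey's correspondence applied with $\mathcal{A}=\mbox{}^\perp(\mathcal{I}_r(\mathcal{C}))$, $\mathcal{B}=\mathcal{GI}_r(\mathcal{C})$, $\mathcal{D}=\mathcal{W}(\mathcal{C})$, after rewriting the two displayed pairs by means of Proposition~\ref{gorinjpairs}(2) and~(4) and using the completeness of $(\mbox{}^\perp(\mathcal{GI}_r(\mathcal{C})),\mathcal{GI}_r(\mathcal{C}))$ from the preceding theorem. One small remark: the citation of \cite[Theorem 7.4.6]{EJ} for the completeness of $(\mbox{}^\perp(\mathcal{I}_r(\mathcal{C})),\mathcal{I}_r(\mathcal{C}))$ is misdirected (that result concerns $\mathcal{P}_r$ in $\Modl$, not $\mathcal{I}_r$); the induction you sketch starting from $({\rm Ob}(\mathcal{C}),\mathcal{I}_0(\mathcal{C}))$ is the right idea, and alternatively one can exhibit the cogenerating set $\{\Omega^r(G/J)\}$ via the lemma of Grothendieck already invoked in Section~3, which is what the paper tacitly relies on.
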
 

\vspace{0.25cm}

Proposition \ref{Gorcomplexes} is also valid for every Gorenstein homological dimension. 

\vspace{0.5cm}

\begin{proposition} Let $X$ be a chain complex over a Gorenstein category $\mathcal{C}$. 
\begin{itemize}
\item[ {\bf (1)}] For every $0 < r \leq FDI(\mathcal{C})$, $X$ is Gorenstein-$r$-projective if, and only if, $X_m$ is a Gorenstein-$r$-projective object for every $m \in \mathbb{Z}$.
\item[ {\bf (2)}] For every $0 < r \leq FDP(\mathcal{C})$, $X$ is Gorenstein-$r$-injective if, and only if, $X_m$ is a Gorenstein-$r$-injective object for every $m \in \mathbb{Z}$. 
\end{itemize} 
\end{proposition}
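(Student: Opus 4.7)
The plan is to reduce each equivalence to the case $r = 0$, which is exactly Proposition \ref{Gorcomplexes}. The main tool is \cite[Proposition 11.5.7]{EJ} (respectively \cite[Proposition 11.2.5]{EJ}), valid in any Gorenstein category as already noted in the paper: an object $Z$ of $\mathcal{C}$ satisfies ${\rm Gpd}(Z) \leq r$ if and only if, for every partial projective resolution $0 \longrightarrow K \longrightarrow P_{r-1} \longrightarrow \cdots \longrightarrow P_0 \longrightarrow Z \longrightarrow 0$, the $r$-th syzygy $K$ is Gorenstein-projective; and dually for Gorenstein-injective dimension via a partial injective coresolution. Since $\Complexes$ is itself a Gorenstein category, the same characterization is available inside $\Complexes$.

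For the forward direction of part (1), assume $X \in \mathcal{GP}_r(\Complexes)$, so there is an exact sequence $0 \longrightarrow G_r \longrightarrow G_{r-1} \longrightarrow \cdots \longrightarrow G_0 \longrightarrow X \longrightarrow 0$ in $\Complexes$ with each $G_i$ a Gorenstein-projective complex. By Proposition \ref{Gorcomplexes}, each component $(G_i)_m$ belongs to $\mathcal{GP}_0(\mathcal{C})$, and passing to $m$-th components of the exact sequence yields a Gorenstein-projective resolution of $X_m$ of length at most $r$ in $\mathcal{C}$, so ${\rm Gpd}(X_m) \leq r$.

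For the converse, choose a projective resolution $0 \longrightarrow K \longrightarrow P_{r-1} \longrightarrow \cdots \longrightarrow P_0 \longrightarrow X \longrightarrow 0$ in $\Complexes$. Every projective object of $\Complexes$ decomposes as a direct sum of disk complexes $D^{m+1}(Q)$ on projective objects $Q$ of $\mathcal{C}$ (as already used in the proof of Proposition \ref{Gorcomplexes}), so each $(P_i)_m$ is projective in $\mathcal{C}$. Taking $m$-th components therefore produces a projective resolution $0 \longrightarrow K_m \longrightarrow (P_{r-1})_m \longrightarrow \cdots \longrightarrow (P_0)_m \longrightarrow X_m \longrightarrow 0$ of $X_m$ in $\mathcal{C}$. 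The hypothesis ${\rm Gpd}(X_m) \leq r$ together with \cite[Proposition 11.5.7]{EJ} forces $K_m$ to be Gorenstein-projective in $\mathcal{C}$ for every $m \in \mathbb{Z}$. A second application of Proposition \ref{Gorcomplexes} gives that $K$ itself is a Gorenstein-projective complex, and then \cite[Proposition 11.5.7]{EJ} applied inside $\Complexes$ yields $X \in \mathcal{GP}_r(\Complexes)$.

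Part (2) is strictly dual: replace ``projective resolution'' by ``injective coresolution'', invoke Proposition \ref{Gorcomplexes}(2) and \cite[Proposition 11.2.5]{EJ} in place of their projective counterparts, and use the dual fact that every injective object of $\Complexes$ splits as a direct sum of disk complexes $D^{m+1}(E)$ on injective objects $E$ of $\mathcal{C}$, so that its degreewise components are injective in $\mathcal{C}$. The only mild obstacle throughout is ensuring that ``componentwise'' and ``complex-wise'' (co)resolutions line up compatibly at each degree, and this is precisely what the disk-complex decomposition of projectives (respectively injectives) in $\Complexes$ supplies.
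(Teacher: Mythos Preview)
Your proof is correct and follows essentially the same approach as the paper: both directions reduce to Proposition~\ref{Gorcomplexes} via \cite[Proposition 11.5.7]{EJ}, taking a Gorenstein-projective resolution componentwise for the forward implication and a partial projective resolution of $X$ in $\Complexes$ for the converse. Your extra remarks on the disk-complex decomposition of projectives and the explicit appeal to \cite[Proposition 11.5.7]{EJ} inside $\Complexes$ merely spell out details the paper leaves implicit.
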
 
\begin{proof} 
We only prove {\bf (1)}. Let $X$ be a Gorenstein $r$-projective chain complex. Then there exists an exact sequence $0 \longrightarrow C_r \longrightarrow C_{r-1} \longrightarrow \cdots  \longrightarrow C_0 \longrightarrow X \longrightarrow 0$ in $\Complexes$ such that each $C_i$ is a Gorenstein-projective complex. Note that for each $m \in \mathbb{Z}$ we have an exact sequence in $0 \longrightarrow (C_r)_m \longrightarrow (C_{r-1})_m \longrightarrow \cdots \longrightarrow (C_0)_m \longrightarrow X_m \longrightarrow 0$ in $\mathcal{C}$, which turns out to be an exact left Gorenstein-projective resolution of $X_m$ of length $r$, by Proposition \ref{Gorcomplexes}. 

Now suppose each $X_m$ is a Gorenstein-$r$-projective object. Consider an exact partial left projective resolution $0 \longrightarrow C \longrightarrow P_{r-1} \longrightarrow \cdots  \longrightarrow P_0 \longrightarrow X \longrightarrow 0$. By \cite[Proposition 11.5.7]{EJ}, it suffices to show that $C$ is a Gorenstein-projective chain complex. Notice that for every $m \in \mathbb{Z}$ we have an exact sequence $0 \longrightarrow (C)_m \longrightarrow (P_{r-1})_m \longrightarrow \cdots \longrightarrow (P_0)_m \longrightarrow X_m \longrightarrow 0$  in $\mathcal{C}$ with each $(P_i)_m$ projective. Since every $X_m \in \mathcal{GP}_r(\mathcal{C})$, we have $C_m$ is Gorenstein-projective in $\mathcal{C}$ by \cite[Proposition 11.5.7]{EJ}. Hence $C$ is a Gorenstein-projective complex by Proposition \ref{Gorcomplexes}. 
\end{proof}


\section{Gorenstein-flat dimension and model structures on mo-dules and chain complexes}

In addition to the notions of Gorenstein-projective and Gorenstein-injective modules, there is also the notion of Gorenstein-flat modules. In \cite{HoveyGillespie}, M. Hovey and J. Gillespie construct a unique Abelian model structure on $\Modl$ (with $R$ a Gorenstein ring) where the cofibrant objects are the Gorenstein-flat modules, the fibrant objects are the cotorsion modules, and the class $\mathcal{W}$ is again the class of trivial objects. We call this structure the \underline{Gorenstein-flat model structure}. We shall generalize the construction of this structure to the category of complexes, using the notion of bar-cotorsion pairs. 

We begin this section studying the concept of Gorenstein-flat dimension, in order to construct related model structures on both $\Modl$ and $\Cadl$. 

\vspace{0.5cm}

\begin{definition} {\rm A left $R$-module $E \in {\rm Ob}(\Modl)$ is said to be \underline{Gorenstein-flat} if there exists an exact sequence of flat modules $\cdots \longrightarrow F_1 \longrightarrow F_0 \longrightarrow F^0 \longrightarrow F^1 \longrightarrow \cdots$ such that $E = {\rm Ker}(F^0 \longrightarrow F^1)$ and such that the sequence $\cdots \longrightarrow I \otimes_R F_1 \longrightarrow I \otimes_R F_0 \longrightarrow I \otimes_R F^0 \longrightarrow I \otimes_R F^{1} \longrightarrow \cdots$ is exact, for every injective module $I$. }
\end{definition}

\newpage

Let $\mathcal{GF}_0$ denote the class of Gorenstein-flat left $R$-modules. We class $\mathcal{GC} = (\mathcal{GF}_0)\mbox{}^\perp$ is called the class of \underline{Gorenstein-cotorsion modules}. It is known that $(\mathcal{GF}_0, \mathcal{GC})$ is a complete cotorsion pair, as mentioned in \cite{HoveyGillespie}. For our purposes, we shall provide a cogenerating set for this pair, which is related to the following restriction of the notion of pure sub-modules in the context of Gorenstein homological algebra.  

\vspace{0.5cm}

\begin{definition} {\rm A sub-module $N$ of a left $R$-module $M$ is said to be \underline{$\mathcal{W}$-pure} if for every right $R$-module $W \in \mathcal{W}$, the sequence $0 \longrightarrow W \otimes_R N \longrightarrow W \otimes_R M$ is exact. The short exact sequence $0 \longrightarrow N \longrightarrow M \longrightarrow M/N \longrightarrow 0$ is called a \underline{$\mathcal{W}$-pure exact sequence}. }
\end{definition}

\vspace{0.5cm}

\begin{proposition}\label{subcos} Let $R$ be a Gorenstein ring. If $S$ is a $\mathcal{W}$-pure sub-module of a Gorenstein-flat left $R$-module $E$, then $S$ and $E/S$ are also Gorenstein-flat. 
\end{proposition}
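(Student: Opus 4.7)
The plan is to reformulate Gorenstein-flatness as a Tor-vanishing condition against the class $\mathcal{W}$, and then read both conclusions off the long exact sequence of $\mathrm{Tor}$ applied to $0 \to S \to E \to E/S \to 0$. The first step is to invoke the standard characterization over an $n$-Iwanaga-Gorenstein ring: a left $R$-module $M$ lies in $\mathcal{GF}_0$ if and only if ${\rm Tor}^R_i(W,M) = 0$ for every right $R$-module $W \in \mathcal{W}$ and every $i \geq 1$. The definition already gives this for injective $W$, and over a Gorenstein ring every $W \in \mathcal{W}$ admits a finite injective coresolution (since ${\rm id}(W) \leq n$), so dimension-shifting along this coresolution extends the Tor-vanishing from injectives to the whole of $\mathcal{W}$; the converse is the usual splicing of a left flat resolution of $M$ with a right flat coresolution obtained from the Tor-vanishing condition (\cite[\S 11.8]{EJ}).

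Granting this reformulation, fix $W \in \mathcal{W}$ and consider the tail of the $\mathrm{Tor}$ long exact sequence
\begin{equation*}
{\rm Tor}^R_1(W, E) \longrightarrow {\rm Tor}^R_1(W, E/S) \longrightarrow W \otimes_R S \longrightarrow W \otimes_R E.
\end{equation*}
Since $E \in \mathcal{GF}_0$, the left term vanishes, while by the very definition of $\mathcal{W}$-purity the right-hand map $W \otimes_R S \to W \otimes_R E$ is injective. Hence ${\rm Tor}^R_1(W, E/S) = 0$ for every $W \in \mathcal{W}$.

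Next I would promote this to all degrees by a dimension shift. Given $W \in \mathcal{W}$, pick a short exact sequence $0 \to K \to P_0 \to W \to 0$ with $P_0$ projective. Either $K = 0$ or ${\rm pd}(K) = {\rm pd}(W) - 1$, so in any case $K \in \mathcal{W}$. Applying $- \otimes_R (E/S)$ gives the isomorphism ${\rm Tor}^R_{i+1}(W, E/S) \cong {\rm Tor}^R_i(K, E/S)$ for every $i \geq 1$, and induction on $i$ reduces everything to the case $i = 1$ already handled. Thus ${\rm Tor}^R_i(W, E/S) = 0$ for all $i \geq 1$ and all $W \in \mathcal{W}$, which by the first step shows $E/S \in \mathcal{GF}_0$. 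Feeding this back into the long exact sequence together with ${\rm Tor}^R_i(W, E) = 0$ yields ${\rm Tor}^R_i(W, S) = 0$ for all $i \geq 1$, so $S \in \mathcal{GF}_0$ as well.

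The only subtle point is the opening reformulation of $\mathcal{GF}_0$ as the $\mathrm{Tor}$-orthogonal of $\mathcal{W}$ over a Gorenstein ring; once that is in place the rest is a short chase through the long exact sequence of $\mathrm{Tor}$ and an elementary syzygy induction, so I expect no serious obstacle.
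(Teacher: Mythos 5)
Your proof is correct and takes essentially the same route as the paper's: both reduce Gorenstein-flatness over a Gorenstein ring to vanishing of ${\rm Tor}^R_i(W,-)$ for $W \in \mathcal{W}$ (the characterization you sketch is exactly the cited \cite[Theorem 10.3.8]{EJ}), and then read off ${\rm Tor}^R_1(W,E/S)=0$ from the long exact sequence using $\mathcal{W}$-purity, feeding this back to get ${\rm Tor}^R_1(W,S)=0$. Your explicit syzygy induction to all degrees is just a more detailed version of what the paper delegates to the cited theorem.
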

\begin{proof} Consider the short exact sequence $0 \longrightarrow S \longrightarrow E \longrightarrow E / S \longrightarrow 0$. Then, given a right $R$-module $W \in \mathcal{W}$, we have the long exact sequence \[ \cdots \longrightarrow {\rm Tor}^R_1(W, E / S) \longrightarrow W \otimes_R S \longrightarrow W \otimes_R E \longrightarrow W \otimes_R E / S \longrightarrow 0. \] Since  $S$ is a $\mathcal{W}$-pure sub-module of $E$, we have that the map $W \otimes_R S \longrightarrow W \otimes_R E$ is injective for every $W \in \mathcal{W}$. So ${\rm Tor}^R_1(W, E / S) = 0$ for every $W \in \mathcal{W}$. Hence $E / S$ is a Gorenstein-flat module by \cite[Theorem 10.3.8]{EJ}. Using the same reference and the previous long exact sequence, we can also conclude that $S$ is Gorenstein-flat.
\end{proof}

\vspace{0.5cm}

\begin{proposition} Given an infinite cardinal $\kappa > {\rm Card}(R)$, a Gorenstein-flat module $E$ and an element $x \in E$, there exists a $\mathcal{W}$-pure sub-module $S \subseteq E$ such that $x \in S$ and ${\rm Card}(S) \leq \kappa$.
\end{proposition}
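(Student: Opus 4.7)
The plan is to build $S$ as the union of an ascending chain $S_0 \subseteq S_1 \subseteq S_2 \subseteq \cdots \subseteq E$ in the spirit of the standard L\"owenheim--Skolem construction of pure submodules of bounded cardinality. I would set $S_0 := Rx$, so that $x \in S_0$ and $|S_0| \leq |R| < \kappa$. At the inductive step, given $S_n$ with $|S_n| \leq \kappa$, I enlarge $S_n$ to $S_{n+1} \subseteq E$ by adjoining, for every finitely presented right $R$-module $W$ and every element $\xi = \sum_{i} w_i \otimes s_i \in W \otimes_R S_n$ whose image in $W \otimes_R E$ vanishes, a finite set of elements of $E$ witnessing this vanishing. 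Such witnesses exist by the standard equational description of zero elements in a tensor product: if $\sum_i w_i \otimes s_i = 0$ in $W \otimes_R E$, one can find $e_1, \ldots, e_m \in E$ and $r_{ij} \in R$ with $s_i = \sum_j r_{ij} e_j$ and $\sum_i w_i r_{ij} = 0$ for each $j$.

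The cardinality bookkeeping goes through because $R$ is (left and right) Noetherian with $|R| < \kappa$: finitely generated and finitely presented coincide for right $R$-modules, and up to isomorphism there are at most $\kappa$ such modules; each has cardinality $\leq \kappa$; and the collection of finite tuples from $S_n$ is also of size $\leq \kappa$. Thus at each stage one adjoins at most $\kappa \cdot \kappa \cdot \aleph_0 = \kappa$ new elements, giving $|S_{n+1}| \leq \kappa$, so that $S := \bigcup_n S_n$ contains $x$ and satisfies $|S| \leq \aleph_0 \cdot \kappa = \kappa$.

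To verify that $S$ is $\mathcal{W}$-pure, I take $W \in \mathcal{W}$ and $\xi \in W \otimes_R S$ whose image in $W \otimes_R E$ is zero. The element $\xi$ involves only finitely many elements of $W$ and of $S$, and by the directed-colimit property of tensor products applied both to $W = \operatorname{colim} W_\alpha$ (with $W_\alpha$ running over the finitely generated submodules of $W$) and to $S = \bigcup_n S_n$, $\xi$ descends to some $\xi_0 \in W' \otimes_R S_n$ for a sufficiently large $n$ and a sufficiently large finitely generated (hence finitely presented) submodule $W' \subseteq W$, with image zero in $W' \otimes_R E$. The construction then forces $\xi_0$ to vanish in $W' \otimes_R S_{n+1}$, and hence $\xi = 0$ in $W \otimes_R S$.

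The main obstacle, and the reason why the construction must quantify over \emph{all} finitely presented right $R$-modules rather than only those in $\mathcal{W}$, is that a finitely presented submodule $W' \subseteq W \in \mathcal{W}$ need not itself belong to $\mathcal{W}$: over a Gorenstein but non-regular ring, finitely presented modules can have infinite projective dimension. Consequently the submodule $S$ produced by this procedure is in fact pure in the classical sense, from which $\mathcal{W}$-purity is an immediate consequence; the cardinality count is unaffected, since the class of finitely presented right $R$-modules has at most $|R| < \kappa$ isomorphism classes.
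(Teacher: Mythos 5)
Your argument is essentially the paper's own proof: the paper simply invokes the standard purification construction of Enochs and Jenda (their Proposition 7.4.3), namely a countable ascending chain starting from $Rx$ that at each stage adjoins witnesses for vanishing tensors, whose union is classically pure (hence in particular $\mathcal{W}$-pure) and of cardinality at most $\kappa$, exactly as you do. One small repair: the equational description of vanishing you quote holds only when $w_1,\dots,w_k$ form a generating set of $W$ (for instance $2 \otimes \bar{1} = 0$ in $\mathbb{Z} \otimes_{\mathbb{Z}} \mathbb{Z}/2\mathbb{Z}$ admits no witnesses of the stated form), so at each stage you should first rewrite $\xi$ in terms of a fixed finite generating set of the finitely presented module $W$ before extracting witnesses; with that adjustment the construction, the cardinality count, and the colimit verification all go through.
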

\begin{proof} The proof is the same as the proof given in \cite[Proposition 7.4.3]{EJ}.
\end{proof}

\vspace{0.25cm}

By the previous two propositions, given $E \in \mathcal{GF}_0$, we can construct a continuos chain $\{ S_\alpha \}_{\alpha < \lambda}$ of sub-modules of $E$, for some ordinal $\lambda$, such that $S_1 \in (\mathcal{GF}_0)^{\leq \kappa}$, $S_{\alpha + 1} / S_\alpha \in (\mathcal{GF}_0)^{\leq \kappa}$, and $S_\beta = \bigcup_{\alpha < \beta} S_\alpha$, for every limit ordinal $\beta < \alpha$, where $(\mathcal{GF}_0)^{\leq \kappa} := \{ S \in \mathcal{GF}_0 \mbox{ : } {\rm Card}(S) \leq \kappa \}$. In other words, every Gorenstein-flat left $R$-module $E$ has a $(\mathcal{GF}_0)^{\leq \kappa}$-filtration. By \cite[Proposition 3.2]{Perez}, $(\mathcal{GF}_0, \mathcal{GC})$ is a cotorsion pair cogenerated by $(\mathcal{GF}_0)^{\leq \kappa}$. As a consequence of the Eklof and Trlifaj Theorem,  $(\mathcal{GF}_0, \mathcal{GC})$ is a complete.

In order to present the Gorenstein-flat model structure on the category of chain complexes, we shall introduce the notion of bar-cotorsion pairs. Previously, we have defined the bar-extension functors $\overline{{\rm Ext}}^i(-,-)$ for chain complexes. Bar-cotorsion pairs are defined by two classes of chain complexes orthogonal to each with respect to $\overline{{\rm Ext}}^1(-,-)$, instead of ${\rm Ext}^1(-,-)$. 

\vspace{0.5cm}

\begin{definition} {\rm Let $\mathcal{A}$ and $\mathcal{B}$ be two classes of chain complexes over $\Modl$, we shall say that $\mathcal{A}$ and $\mathcal{B}$ form a \underline{bar-cotorsion pair} $(\mathcal{A}\mbox{ }|\mbox{ }\mathcal{B})$ if:
\begin{itemize}
\item[ {\bf (1)}] $\mathcal{A} = \mbox{}^{\overline{\perp}}\mathcal{B} := \{ A \in {\rm Ob}(\Cadl) \mbox{ : } \overline{{\rm Ext}}^1(A, B) = 0, \mbox{ for every }B \in \mathcal{B} \}$,

\item[ {\bf (2)}] $\mathcal{B} = \mathcal{A}^{\overline{\perp}} := \{ B \in {\rm Ob}(\Cadl) \mbox{ : } \overline{{\rm Ext}}^1(A, B) = 0, \mbox{ for every }A \in \mathcal{A} \}$. 
\end{itemize} }
\end{definition}

\vspace{0.5cm}

The notion of suspensions of chain complexes is useful to establish some properties of bar-cotorsion pairs. Given a chain complex $X$ and an integer $k \in \mathbb{Z}$, the \underline{$k$th suspension} of $X$ is the complex $\Sigma^k(X)$ given by $(\Sigma^k(X))_n := X_{n-k}$, where the boundary maps are defined by $\partial^{\Sigma^k(X)}_n := (-1)^k \partial^X_{n-k}$. A class of complexes $\mathcal{D}$ is said to be \underline{closed under suspensions} if $\Sigma^k(D) \in \mathcal{D}$, for every complex $D \in \mathcal{D}$. 

The importance of suspension functors lies in the fact that each $\overline{{\rm Ext}}^i(-,-)$ can be expressed in terms of suspensions and the standard ${\rm Ext}^i(-,-)$. Specifically, for every pair $X, Y \in {\rm Ob}(\Cadl)$ and every $i \geq 0$, $\overline{{\rm Ext}}^i(X, Y)$ is a chain complex of the form \[ \cdots \longrightarrow {\rm Ext}^i(X, \Sigma^{k+1}(Y)) \longrightarrow {\rm Ext}^i(X, \Sigma^k(Y)) \longrightarrow {\rm Ext}^i(X, \Sigma^{k-1}(Y)) \longrightarrow \cdots \] This fact is mentioned in \cite[Page 87]{GR} and it is not hard to prove. There is also an isomorphism of groups ${\rm Ext}^i(\Sigma^{-k}(X), Y) \cong {\rm Ext}^i(X, \Sigma^k(Y))$, for every $k \in \mathbb{Z}$, as mentioned in \cite[Page 2]{Rada}. With these facts in mind, we can prove the following lemma. 

\vspace{0.5cm}

\begin{lemma}\label{cerradura} Let $(\mathcal{A} \mbox{ }|\mbox{ }\mathcal{B})$ be a bar-cotorsion pair. Then $\mathcal{A}$ is closed under suspensions if, and only if, $\mathcal{B}$ is. The same result holds if $\mathcal{A}$ and $\mathcal{B}$ form a cotorsion pair $(\mathcal{A}, \mathcal{B})$. 
\end{lemma}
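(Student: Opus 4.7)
The plan is to exploit the shift-adjunction identity ${\rm Ext}^1(\Sigma^{-k}(X), Y) \cong {\rm Ext}^1(X, \Sigma^k(Y))$ recalled just before the statement, together with the chain-complex description $\overline{{\rm Ext}}^i(X, Y)_n = {\rm Ext}^i(X, \Sigma^n(Y))$. In either setting these identifications reduce closure under suspensions to a translation in one of the two arguments. For the ordinary cotorsion pair $(\mathcal{A}, \mathcal{B})$, I would first suppose $\mathcal{A}$ is closed under suspensions and show the same for $\mathcal{B}$: fix $B \in \mathcal{B}$ and $k \in \mathbb{Z}$; to check $\Sigma^k(B) \in \mathcal{A}^\perp = \mathcal{B}$, take any $A \in \mathcal{A}$. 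By hypothesis $\Sigma^{-k}(A) \in \mathcal{A}$, so ${\rm Ext}^1(\Sigma^{-k}(A), B) = 0$, and the adjunction transports this to ${\rm Ext}^1(A, \Sigma^k(B)) = 0$; hence $\Sigma^k(B) \in \mathcal{B}$. The converse implication is symmetric, with the adjunction applied after substituting $k \mapsto -k$.

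For the bar-cotorsion pair $(\mathcal{A} \mid \mathcal{B})$, I would upgrade the previous computation to the level of the complex $\overline{{\rm Ext}}^1$. The $n$-th term of $\overline{{\rm Ext}}^1(\Sigma^k(A), B)$ equals ${\rm Ext}^1(\Sigma^k(A), \Sigma^n(B)) \cong {\rm Ext}^1(A, \Sigma^{n-k}(B))$, which is precisely the $(n-k)$-th term of $\overline{{\rm Ext}}^1(A, B)$; this identifies $\overline{{\rm Ext}}^1(\Sigma^k(A), B) \cong \Sigma^k(\overline{{\rm Ext}}^1(A, B))$ as complexes of abelian groups. An analogous calculation on the second variable yields $\overline{{\rm Ext}}^1(A, \Sigma^k(B)) \cong \Sigma^{-k}(\overline{{\rm Ext}}^1(A, B))$. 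Consequently, whenever $A \in \mathcal{A}$ and $B \in \mathcal{B}$ satisfy $\overline{{\rm Ext}}^1(A, B) = 0$, both $\overline{{\rm Ext}}^1(\Sigma^k(A), B) = 0$ and $\overline{{\rm Ext}}^1(A, \Sigma^k(B)) = 0$, so $\Sigma^k(A) \in \mathcal{A}$ and $\Sigma^k(B) \in \mathcal{B}$; in particular both halves of any bar-cotorsion pair are automatically closed under suspensions, so the asserted equivalence holds (vacuously).

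The main obstacle I anticipate is the sign bookkeeping needed to upgrade the termwise identifications to genuine isomorphisms of chain complexes. The suspension functor carries a factor $(-1)^k$ in its differentials, while the boundary maps of $\overline{{\rm Ext}}^1(X, Y)$ are those induced from $Y$, so one must verify that the squares comparing $\overline{{\rm Ext}}^1(A, \Sigma^k(B))_n$ with $\Sigma^{-k}(\overline{{\rm Ext}}^1(A, B))_n$ commute up to the correct sign (and analogously in the first variable). This calculation is routine but delicate; once settled, the rest of the argument is a direct comparison of vanishing conditions.
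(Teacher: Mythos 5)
Your argument is correct and rests on the same device the paper uses, namely the shift adjunction $\mathrm{Ext}^1(\Sigma^{-k}(X), Y) \cong \mathrm{Ext}^1(X, \Sigma^k(Y))$ read against the degreewise description $\overline{\mathrm{Ext}}^1(X,Y)_n = \mathrm{Ext}^1(X, \Sigma^n(Y))$. Your treatment of the ordinary cotorsion pair $(\mathcal{A},\mathcal{B})$ coincides with the paper's.

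For the bar-cotorsion pair case you in fact prove something \emph{stronger} than the paper does, and this is worth noticing. The termwise identification $\overline{\mathrm{Ext}}^1(\Sigma^k(A), B)_n \cong \overline{\mathrm{Ext}}^1(A,B)_{n-k}$ shows that $\mbox{}^{\overline{\perp}}\mathcal{S}$ and $\mathcal{S}^{\overline{\perp}}$ are closed under suspensions for \emph{any} class $\mathcal{S}$; so both halves of \emph{every} bar-cotorsion pair are automatically closed under suspensions, and the ``if and only if'' in the bar-case holds vacuously. The paper instead proves only the conditional statement --- it assumes $\mathcal{A}$ is closed under suspensions, then uses $\overline{\mathrm{Ext}}^1(A, \Sigma^m(B)) \cong \overline{\mathrm{Ext}}^1(\Sigma^{-m}(A), B)$ and the hypothesis $\Sigma^{-m}(A)\in\mathcal{A}$ to conclude --- without observing that the hypothesis is free. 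Your version is thus a small but genuine sharpening, and it clarifies that the interesting content of the lemma sits entirely in the ordinary $\mathrm{Ext}^1$-cotorsion case, where the single ${\rm Ext}^1(A,B)=0$ does not transport across a shift of $A$ alone.

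One remark on your anticipated obstacle: you do not actually need the degreewise bijections to assemble into honest isomorphisms of chain complexes (with their signed differentials). Vanishing of a complex is a termwise condition, so the unsigned termwise identifications already suffice to transport $\overline{\mathrm{Ext}}^1(A,B)=0$ to $\overline{\mathrm{Ext}}^1(\Sigma^k(A),B)=0$ and to $\overline{\mathrm{Ext}}^1(A,\Sigma^k(B))=0$. The sign bookkeeping you worried about is therefore irrelevant to this particular lemma, though it would matter if one wanted the stronger assertion that $\overline{\mathrm{Ext}}^1(\Sigma^k(-),-)$ and $\Sigma^k\overline{\mathrm{Ext}}^1(-,-)$ are naturally isomorphic as functors to $\mathrm{Ch}(\mathbf{Ab})$.
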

\begin{proof} Suppose $(\mathcal{A} \mbox{ }|\mbox{ }\mathcal{B})$ is a bar-cotorsion pair and that $\mathcal{A}$ is closed under suspensions. Let $B \in \mathcal{B}$, $A \in \mathcal{A}$ and $m \in \mathbb{Z}$. Note that the complexes
\[ \begin{tikzpicture}
\matrix (m) [matrix of math nodes, row sep=1em, column sep=1.5em]
{ \cdots & {\rm Ext}^1(A, \Sigma^{k+1}(\Sigma^m(B))) & {\rm Ext}^1(A, \Sigma^k(\Sigma^m(B))) & {\rm Ext}^1(A, \Sigma^{k-1}(\Sigma^m(B))) & \cdots \\ };
\path[->]
(m-1-1) edge (m-1-2) (m-1-2) edge (m-1-3) (m-1-3) edge (m-1-4) (m-1-4) edge (m-1-5);
\end{tikzpicture} \]
and \newpage
\[ \begin{tikzpicture}
\matrix (m) [matrix of math nodes, row sep=1em, column sep=1.5em]
{ \cdots & {\rm Ext}^1(\Sigma^{-m}(A), \Sigma^{k + 1}(B)) & {\rm Ext}^1(\Sigma^{-m}(A), \Sigma^{k}(B)) & {\rm Ext}^1(\Sigma^{-m}(A), \Sigma^{k-1}(B)) & \cdots \\ };
\path[->]
(m-1-1) edge (m-1-2) (m-1-2) edge (m-1-3) (m-1-3) edge (m-1-4) (m-1-4) edge (m-1-5);
\end{tikzpicture} \]
are isomorphic. By the comments above, we have $\overline{{\rm Ext}}^1(A, \Sigma^m(B)) \cong \overline{{\rm Ext}}^1(\Sigma^{-m}(A), B)$. On the other hand, $\overline{{\rm Ext}}^1(\Sigma^{-m}(A), B)= 0$ for every $A \in \mathcal{A}$, since $B \in \mathcal{B}$ and $\mathcal{A}$ is closed under suspensions. Hence the result follows. The converse can be proven similarly.  
\end{proof}

\vspace{0.25cm}

\begin{theorem}\label{teobarra} Let $\mathcal{A}$ and $\mathcal{B}$ be two classes in $\Cadl$ such that $\mathcal{A}$ is closed under suspensions. Then $(\mathcal{A} \mbox{ }|\mbox{ }\mathcal{B})$ is a bar-cotorsion pair if, and only if, $(\mathcal{A, B})$ is a cotorsion pair.
\end{theorem}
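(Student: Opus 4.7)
The plan is to reduce both implications to the componentwise formula
\[
\overline{{\rm Ext}}^1(A,B) = 0 \iff {\rm Ext}^1(A, \Sigma^k(B)) = 0 \mbox{ for every } k \in \mathbb{Z},
\]
which is immediate from the description (recalled just before Lemma \ref{cerradura}) of $\overline{{\rm Ext}}^1(A,B)$ as the chain complex whose $k$-th term is ${\rm Ext}^1(A, \Sigma^k(B))$. Two consequences are used repeatedly. First, $\overline{{\rm Ext}}^1(A,B) = 0$ automatically forces ${\rm Ext}^1(A,B) = 0$ by inspecting the $0$-th component. Second, combining the formula with the isomorphism ${\rm Ext}^1(\Sigma^{-k}(A),B) \cong {\rm Ext}^1(A,\Sigma^k(B))$ shows that, once one of the two classes is closed under suspensions, the componentwise vanishing reduces to ordinary ${\rm Ext}^1$-orthogonality against (possibly more) pairs in $\mathcal{A} \times \mathcal{B}$.

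For the direction $(\mathcal{A} \mid \mathcal{B})$ a bar-cotorsion pair $\Rightarrow (\mathcal{A}, \mathcal{B})$ a cotorsion pair, Lemma \ref{cerradura} gives closure of $\mathcal{B}$ under suspensions. The inclusion $\mathcal{A} \subseteq \mbox{}^\perp\mathcal{B}$ comes from the $0$-th component observation. For $\mbox{}^\perp\mathcal{B} \subseteq \mathcal{A}$: if $A \in \mbox{}^\perp\mathcal{B}$, then for every $B \in \mathcal{B}$ and every $k \in \mathbb{Z}$ we also have $\Sigma^k(B) \in \mathcal{B}$, hence ${\rm Ext}^1(A,\Sigma^k(B)) = 0$, which forces $\overline{{\rm Ext}}^1(A,B) = 0$ and thus $A \in \mbox{}^{\overline{\perp}}\mathcal{B} = \mathcal{A}$. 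The equality $\mathcal{B} = \mathcal{A}^\perp$ is symmetric, using this time closure of $\mathcal{A}$ under suspensions together with the isomorphism above.

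The reverse direction runs in mirror image. Given a cotorsion pair $(\mathcal{A}, \mathcal{B})$, Lemma \ref{cerradura} again yields closure of $\mathcal{B}$ under suspensions, so for $A \in \mathcal{A}$ and $B \in \mathcal{B}$ every component ${\rm Ext}^1(A, \Sigma^k(B))$ vanishes, giving $\mathcal{A} \subseteq \mbox{}^{\overline{\perp}}\mathcal{B}$; the reverse inclusion $\mbox{}^{\overline{\perp}}\mathcal{B} \subseteq \mathcal{A} = \mbox{}^\perp \mathcal{B}$ follows once more from the $0$-th component observation. The equality $\mathcal{B} = \mathcal{A}^{\overline{\perp}}$ is established dually.

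There is no real obstacle here: the theorem is essentially a bookkeeping consequence of the componentwise description of $\overline{{\rm Ext}}^1$ combined with Lemma \ref{cerradura}. The one subtlety worth flagging is that $\overline{{\rm Ext}}^1(A,B) = 0$ must be interpreted as componentwise vanishing of the chain complex and not mere acyclicity, but this is built into the definitions of $\mbox{}^{\overline{\perp}}(-)$ and $(-)^{\overline{\perp}}$ that feature in the notion of bar-cotorsion pair.
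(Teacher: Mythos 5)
Your proof is correct and takes essentially the same route as the paper's: both reduce $\overline{{\rm Ext}}^1(A,B)=0$ to the componentwise condition ${\rm Ext}^1(A,\Sigma^k(B))=0$ for all $k$, apply Lemma \ref{cerradura} to transfer closure under suspensions from $\mathcal{A}$ to $\mathcal{B}$, and use the isomorphism ${\rm Ext}^1(\Sigma^{-k}(A),B)\cong{\rm Ext}^1(A,\Sigma^k(B))$ for the $\mathcal{B}=\mathcal{A}^\perp$ half. The only difference is one of completeness: the paper writes out only the forward implication (and only the $\mathcal{A}=\mbox{}^\perp\mathcal{B}$ part of it in detail), while you spell out both directions explicitly.
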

\begin{proof} Suppose $(\mathcal{A} \mbox{ }|\mbox{ }\mathcal{B})$ is a bar-cotorsion pair. First, we show $\mathcal{A} = \mbox{}^\perp\mathcal{B}$. Let $A \in \mathcal{A}$ and $B \in \mathcal{B}$. Notice that $\overline{{\rm Ext}}^1(A, B) = 0$ implies ${\rm Ext}^1(A, B) = 0$, and so $A \in \mbox{}^\perp\mathcal{B}$. Now if $A \in \mbox{}^\perp\mathcal{B}$, we have ${\rm Ext}^1(A, B) = 0$ for every $B \in \mathcal{B}$. Since $\mathcal{B}$ is closed under suspensions by Lemma \ref{cerradura}, we have $\Sigma^k(B) \in \mathcal{B}$ for every $k \in \mathbb{Z}$. Then ${\rm Ext}^1(A, \Sigma^k(B)) = 0$ for every $k \in \mathbb{Z}$. It follows $\overline{{\rm Ext}}^1(A, B) = 0$ for every $B \in \mathcal{B}$ and so $A \in \mbox{}^{\overline{\perp}}\mathcal{B} = \mathcal{A}$. The equality $\mathcal{B} = \mathcal{A}^\perp$ follows similarly and using the isomorphism ${\rm Ext}^1(A, \Sigma^k(B)) \cong {\rm Ext}^1(\Sigma^{-k}(A), B)$.  
\end{proof}

\vspace{0.5cm}

\begin{definition} {\rm A bar-cotorsion pair $(\mathcal{A} \mbox{ }|\mbox{ } \mathcal{B})$ is said to be \underline{bar-cogenerated} by a set $\mathcal{S} \subseteq \mathcal{A}$ if $\mathcal{B} = \mathcal{S}^{\overline{\perp}}$. }
\end{definition}

\vspace{0.5cm}

\begin{theorem} Let $\mathcal{A}$ and $\mathcal{B}$ be two classes in $\Cadl$, and $\mathcal{S} \subseteq \mathcal{A}$ be a set closed under suspensions. If $(\mathcal{A, B})$ is a cotorsion pair cogenerated by $\mathcal{S}$, then $\mathcal{A}$ and $\mathcal{B}$ form a bar-cotorsion pair $(\mathcal{A} \mbox{ }|\mbox{ }\mathcal{B})$ which is bar-cogenerated by $\mathcal{S}$. The converse is also true. 
\end{theorem}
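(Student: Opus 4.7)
The plan is to deduce the theorem from Theorem~\ref{teobarra} combined with the two basic identifications recalled in the paragraph preceding Lemma~\ref{cerradura}: namely, that $\overline{{\rm Ext}}^1(X,Y)=0$ amounts to ${\rm Ext}^1(X,\Sigma^k(Y))=0$ for every $k\in\mathbb{Z}$, and the suspension shift isomorphism ${\rm Ext}^1(\Sigma^{-k}(X),Y)\cong {\rm Ext}^1(X,\Sigma^k(Y))$. Throughout, the hypothesis that $\mathcal{S}$ is closed under suspensions will be used to trade a suspension on one argument for a suspension on the other and to keep everything inside the same cogenerating set.

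For the forward implication, assume $(\mathcal{A},\mathcal{B})$ is a cotorsion pair cogenerated by $\mathcal{S}$. I would first verify that $\mathcal{B}$ is closed under suspensions: given $B\in\mathcal{B}$ and $k\in\mathbb{Z}$, for every $S\in\mathcal{S}$ the shift isomorphism gives ${\rm Ext}^1(S,\Sigma^k(B))\cong {\rm Ext}^1(\Sigma^{-k}(S),B)$, which vanishes because $\Sigma^{-k}(S)\in\mathcal{S}\subseteq\mathcal{A}$. Hence $\Sigma^k(B)\in\mathcal{S}^\perp=\mathcal{B}$, and by Lemma~\ref{cerradura} the class $\mathcal{A}$ is closed under suspensions as well. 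Theorem~\ref{teobarra} then promotes $(\mathcal{A},\mathcal{B})$ to a bar-cotorsion pair $(\mathcal{A}\,|\,\mathcal{B})$. To see it is bar-cogenerated by $\mathcal{S}$, I argue $\mathcal{B}=\mathcal{S}^{\overline{\perp}}$: the inclusion $\mathcal{B}\subseteq\mathcal{S}^{\overline{\perp}}$ is immediate from $\mathcal{S}\subseteq\mathcal{A}$ and the bar-cotorsion equations, while the reverse inclusion uses that $B\in\mathcal{S}^{\overline{\perp}}$ implies ${\rm Ext}^1(S,\Sigma^k(B))=0$ for every $k$ (hence at $k=0$), giving $B\in\mathcal{S}^\perp=\mathcal{B}$.

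For the converse, assume $(\mathcal{A}\,|\,\mathcal{B})$ is a bar-cotorsion pair bar-cogenerated by $\mathcal{S}$. I would first check that $\mathcal{A}$ is closed under suspensions directly from the definition: for $A\in\mathcal{A}$ and any $k\in\mathbb{Z}$, the terms of $\overline{{\rm Ext}}^1(\Sigma^k(A),B)$ have the form ${\rm Ext}^1(\Sigma^k(A),\Sigma^j(B))\cong {\rm Ext}^1(A,\Sigma^{j-k}(B))$, all of which vanish because $\overline{{\rm Ext}}^1(A,B)=0$. Thus $\Sigma^k(A)\in\mbox{}^{\overline{\perp}}\mathcal{B}=\mathcal{A}$, so Theorem~\ref{teobarra} yields a cotorsion pair $(\mathcal{A},\mathcal{B})$. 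To show it is cogenerated by $\mathcal{S}$, i.e.\ $\mathcal{B}=\mathcal{S}^\perp$, the inclusion $\mathcal{B}\subseteq\mathcal{S}^\perp$ is trivial, and for the other one I take $B\in\mathcal{S}^\perp$ and use the suspension shift together with the hypothesis $\Sigma^{-k}(\mathcal{S})\subseteq\mathcal{S}$ to get ${\rm Ext}^1(S,\Sigma^k(B))\cong {\rm Ext}^1(\Sigma^{-k}(S),B)=0$ for all $k$, whence $\overline{{\rm Ext}}^1(S,B)=0$ for every $S\in\mathcal{S}$ and $B\in\mathcal{S}^{\overline{\perp}}=\mathcal{B}$.

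There is no real obstacle in this proof; the only subtlety is bookkeeping with the sign and direction of the suspensions when transporting vanishing of ${\rm Ext}^1$ along the shift isomorphism. Once the suspension-closure of $\mathcal{A}$ is established (in either direction), Theorem~\ref{teobarra} does the main work of identifying the two notions of cotorsion pair, and the identification of the cogenerating set on both sides reduces to the observation that $\overline{{\rm Ext}}^1(S,B)=0$ is literally the condition ${\rm Ext}^1(S,\Sigma^k(B))=0$ for all $k$.
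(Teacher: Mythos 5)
Your proof is correct and follows essentially the same route as the paper: closure under suspensions via Lemma~\ref{cerradura}, promotion to a (bar-)cotorsion pair via Theorem~\ref{teobarra}, and identification of the cogenerating set through the shift isomorphism ${\rm Ext}^1(S,\Sigma^k(B))\cong{\rm Ext}^1(\Sigma^{-k}(S),B)$ together with the description of $\overline{{\rm Ext}}^1$ as the complex of groups ${\rm Ext}^1(-,\Sigma^k(-))$. The only differences are cosmetic: you obtain $\mathcal{B}\subseteq\mathcal{S}^{\overline{\perp}}$ from the already-established bar-cotorsion equations rather than by direct computation, and you write out the converse that the paper dismisses with ``follows similarly.''
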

\vspace{-0.25cm}
\begin{proof} Suppose $(\mathcal{A, B})$ is cogenerated by $\mathcal{S}$. Then $\mathcal{B} = \mathcal{S}^\perp$, which is closed under suspensions by Lemma \ref{cerradura} (since $\mathcal{S}$ is closed under suspensions). By the same lemma, $\mathcal{A}$ is also closed under suspensions, and hence by Theorem \ref{teobarra} we have $(\mathcal{A} \mbox{ }|\mbox{ }\mathcal{B})$ is a bar-cotorsion pair. It is only left to show that $\mathcal{B} = \mathcal{S}^{\overline{\perp}}$. Let $B \in \mathcal{B}$. Then ${\rm Ext}^1(S, B) = 0$ for every $S \in \mathcal{S}$. Since $\mathcal{S}$ is closed under suspensions, we have $\Sigma^{-k}(S) \in \mathcal{S}$ for every $k \in \mathbb{Z}$ and every $S \in \mathcal{S}$. Then ${\rm Ext}^1(S, \Sigma^k(B)) \cong {\rm Ext}^1(\Sigma^{-k}(S), B) = 0$, for every $k \in \mathbb{Z}$ and every $S \in \mathcal{S}$. It follows $\overline{{\rm Ext}}^1(S, B) = 0$ for every $S \in \mathcal{S}$, i.e. $B \in \mathcal{S}^{\overline{\perp}}$. Now suppose $B \in \mathcal{S}^{\overline{\perp}}$. Then $\overline{{\rm Ext}}^1(S, B) = 0$ for every $S \in \mathcal{S}$. It follows ${\rm Ext}^1(S, B) = 0$ for every $S \in \mathcal{S}$ and so $B \in \mathcal{S}^\perp = \mathcal{B}$. The converse follows similarly.  
\end{proof}

\vspace{0.25cm}

\begin{lemma}[Eklof's Lemma for bar-extensions] Let $X$ be a chain complex in $\Cadl$ and $(S_\alpha)_{\alpha < \lambda}$ be a continuous chain of sub-complexes of $X$. If $\overline{{\rm Ext}}^1(S_{\alpha + 1} / S_\alpha, Y) = 0$ for every $\alpha + 1 < \lambda$, then $\overline{{\rm Ext}}^1(X, Y) = 0$.
\end{lemma}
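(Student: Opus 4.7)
The plan is to reduce the statement to the classical Eklof Lemma by unpacking the vanishing of $\overline{{\rm Ext}}^1$ in terms of ordinary ${\rm Ext}^1$ groups. Recall the description recorded just before Lemma~\ref{cerradura}: for every pair $X, Y \in {\rm Ob}(\Cadl)$, $\overline{{\rm Ext}}^1(X,Y)$ is the chain complex whose $k$-th component is ${\rm Ext}^1(X, \Sigma^k(Y))$. Consequently the vanishing $\overline{{\rm Ext}}^1(X,Y) = 0$ is equivalent to ${\rm Ext}^1(X, \Sigma^k(Y)) = 0$ for every $k \in \mathbb{Z}$; this is precisely the equivalence already exploited in the proof of Theorem~\ref{teobarra}.

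With this reformulation in hand, I would fix an arbitrary $k \in \mathbb{Z}$. The hypothesis $\overline{{\rm Ext}}^1(S_{\alpha+1}/S_\alpha, Y) = 0$ then yields, componentwise, ${\rm Ext}^1(S_{\alpha+1}/S_\alpha, \Sigma^k(Y)) = 0$ for every $\alpha + 1 < \lambda$. Since $(S_\alpha)_{\alpha<\lambda}$ is a continuous chain of sub-complexes of $X$ in the sense recalled in the preliminaries, I would then invoke the classical Eklof Lemma in the Grothendieck category $\Cadl$, with target $\Sigma^k(Y)$, to conclude ${\rm Ext}^1(X, \Sigma^k(Y)) = 0$. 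As $k$ was arbitrary, every component of the complex $\overline{{\rm Ext}}^1(X,Y)$ vanishes, which is what was to be shown.

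The argument is essentially a routine transfer along the decomposition of $\overline{{\rm Ext}}^1$ into its constituent ${\rm Ext}^1$ groups, so no genuine obstacle is anticipated. The only step that might warrant comment is the appeal to the classical Eklof Lemma in $\Cadl$ rather than in $\Modl$ directly; this is justified by the same Grothendieck-category principle recorded in the footnote after the Eklof--Trlifaj Theorem in the preliminaries, which guarantees that the Eklof-style filtration machinery is available in any Grothendieck category. Thus the lemma reduces cleanly to applying the classical result one value of $k$ at a time.
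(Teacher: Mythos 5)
Your argument is correct and coincides with the paper's own proof: both express $\overline{{\rm Ext}}^1(X,Y)$ componentwise as the groups ${\rm Ext}^1(X,\Sigma^k(Y))$ and then apply the classical Eklof Lemma (valid in the Grothendieck category $\Cadl$) one suspension at a time. Nothing further is needed.
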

\begin{proof} This can be proved easily by expressing $\overline{{\rm Ext}}^1(-,-)$ in terms of ${\rm Ext}^1(-,-)$ and $\Sigma$, and using the original Eklof Lemma for ${\rm Ext}^1(-,-)$\footnote{A proof is given in \cite[Lemma 3.1.2]{Gobel} for the category $\Modl$, but the same argument carries over to $\Cadl$.}. 
\end{proof}

\vspace{0.25cm}

The following result follows. 

\vspace{0.5cm}

\begin{proposition}[{\cite[Proposition 1.1]{Perez}} for bar-cotorsion pairs] \label{cogeneracion} Let $(\mathcal{A} \mbox{ }|\mbox{ } \mathcal{B})$ be a bar-cotorsion pair in $\Cadl$, and $\mathcal{S} \subseteq \mathcal{A}$ be a set. If every $X \in \mathcal{A}$ has an $\mathcal{S}$-filtration, then $(\mathcal{A} \mbox{ }|\mbox{ } \mathcal{B})$ is bar-cogenerated by $\mathcal{S}$. 
\end{proposition}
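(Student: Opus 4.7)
The plan is to mimic the proof of the ordinary cogeneration-from-filtration principle, replacing the classical Eklof Lemma by its bar-version already stated just above. The goal is the equality $\mathcal{B} = \mathcal{S}^{\overline{\perp}}$, which splits into two inclusions of very unequal difficulty.

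First I would dispose of the easy inclusion $\mathcal{B} \subseteq \mathcal{S}^{\overline{\perp}}$. Since $\mathcal{S} \subseteq \mathcal{A}$ and $(\mathcal{A}\mbox{ }|\mbox{ }\mathcal{B})$ is a bar-cotorsion pair, every $B \in \mathcal{B} = \mathcal{A}^{\overline{\perp}}$ satisfies $\overline{{\rm Ext}}^1(S,B) = 0$ for every $S \in \mathcal{S}$; this is immediate from the definition.

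For the non-trivial inclusion $\mathcal{S}^{\overline{\perp}} \subseteq \mathcal{B}$, fix $Y \in \mathcal{S}^{\overline{\perp}}$. Because $(\mathcal{A}\mbox{ }|\mbox{ }\mathcal{B})$ is a bar-cotorsion pair, $\mathcal{B} = \mathcal{A}^{\overline{\perp}}$, so it is enough to check that $\overline{{\rm Ext}}^1(X,Y) = 0$ for every $X \in \mathcal{A}$. By hypothesis, each such $X$ admits an $\mathcal{S}$-filtration $(X_\alpha)_{\alpha < \lambda}$, i.e., a continuous chain of sub-complexes with $X_0 = 0$, $X = \bigcup_{\alpha<\lambda} X_\alpha$, and $X_{\alpha+1}/X_\alpha$ isomorphic to some element of $\mathcal{S}$ for every $\alpha+1 < \lambda$. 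Since $Y \in \mathcal{S}^{\overline{\perp}}$, we obtain $\overline{{\rm Ext}}^1(X_{\alpha+1}/X_\alpha,Y) = 0$ for every $\alpha+1 < \lambda$. Applying Eklof's Lemma for bar-extensions to the continuous chain $(X_\alpha)_{\alpha<\lambda}$ and the complex $Y$ immediately yields $\overline{{\rm Ext}}^1(X,Y)=0$, as desired.

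There is essentially no obstacle here: the entire argument reduces to quoting the bar-version of Eklof's Lemma proved just before the proposition, plus an unwinding of the definition of bar-cotorsion pair. The only point worth being careful about is that the $\mathcal{S}$-filtration hypothesis is phrased at the level of the subquotients $X_{\alpha+1}/X_\alpha$, which is exactly the form in which the bar-Eklof Lemma consumes the data, so no additional reindexing or syzygy manipulation is required.
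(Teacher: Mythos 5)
Your proof is correct and is essentially the argument the paper has in mind: the paper simply states ``The following result follows'' immediately after proving the bar-Eklof Lemma, and your two-inclusion reduction (the easy one from $\mathcal{S}\subseteq\mathcal{A}$, the other by feeding the $\mathcal{S}$-filtration of an arbitrary $X\in\mathcal{A}$ into the bar-Eklof Lemma) is exactly how that deduction goes.
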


\vspace{0.25cm}

Now we are ready to construct the Gorenstein-flat model structure on $\Cadl$. 

\vspace{0.5cm}

\begin{definition} {\rm A chain complex $E \in {\rm Ob}(\Cadl)$ is said to be \underline{Gorenstein-flat} if there exists an exact sequence of flat complexes $\cdots \longrightarrow F_1 \longrightarrow F_0 \longrightarrow F^0 \longrightarrow F^1 \longrightarrow \cdots$ such that $E = {\rm Ker}(F^0 \longrightarrow F^1)$ and such that the sequence $\cdots \longrightarrow I \overline{\otimes} F_1 \longrightarrow I \overline{\otimes} F_0 \longrightarrow I \overline{\otimes} F^0 \longrightarrow I \overline{\otimes} F^{1} \longrightarrow \cdots$ is exact, for every injective complex $I$. We shall denote the class of all Gorenstein-flat complexes by $\widehat{\mathcal{GF}_0}$. }
\end{definition}

\vspace{0.25cm}

Assume that $R$ is a commutative $n$-Gorenstein ring. First, recall from \cite[Definition 4.1.2]{GR} that a chain complex $F$ is \underline{flat} if $F$ is exact and $Z_m(F)$ is a flat module, for every $m \in \mathbb{Z}$. In \cite[Proposition 5.1.2]{GR}, the author proved that $F$ is flat if, and only if, $-\overline{\otimes} F$ is an exact functor. 

\vspace{0.5cm}

\begin{definition} {\rm A complex $Y \in \Cadl$ is \underline{Gorenstein cotorsion} if $\overline{\rm Ext}^1(X, Y) = 0$ for every Gorenstein-flat complex $X \in \widehat{\mathcal{GF}_0}$. We shall denote the class of all Gorenstein cotorsion complexes by $\widehat{\mathcal{GC}}$. }
\end{definition}

\vspace{0.5cm}

\begin{proposition} Let $R$ be a commutative Gorenstein ring, then $\widehat{\mathcal{GF}_0}$ and $\widehat{\mathcal{GC}}$ form a bar-cotorsion pair $(\widehat{\mathcal{GF}_0} \mbox{ }|\mbox{ } \widehat{\mathcal{GC}})$. 
\end{proposition}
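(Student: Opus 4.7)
The inclusion $\widehat{\mathcal{GF}_0} \subseteq \mbox{}^{\overline{\perp}}\widehat{\mathcal{GC}}$ is immediate from the definition $\widehat{\mathcal{GC}} = \widehat{\mathcal{GF}_0}^{\overline{\perp}}$; only the reverse inclusion has content. The plan is to route through Theorem \ref{teobarra}: I will verify that $\widehat{\mathcal{GF}_0}$ is closed under suspensions, that the ordinary right-orthogonal $\widehat{\mathcal{GF}_0}^\perp$ coincides with $\widehat{\mathcal{GC}}$, and that the pair $(\widehat{\mathcal{GF}_0}, \widehat{\mathcal{GC}})$ is an ordinary cotorsion pair, at which point the bar-cotorsion pair statement follows immediately.

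Closure under suspensions is formal: if $E = {\rm Ker}(F^0 \to F^1)$ arises from a totally acyclic complex of flat complexes which remains exact after $I \overline{\otimes} -$ for every injective $I$, then applying $\Sigma^k$ termwise yields another such resolution. Flatness of a complex is preserved by $\Sigma^k$ (since $Z_m(\Sigma^k F) = Z_{m-k}(F)$ and $\Sigma^k$ preserves exactness), and the natural identification $I \overline{\otimes} \Sigma^k(-) \cong \Sigma^k(I \overline{\otimes} -)$ transports $I \overline{\otimes} -$-exactness. Combined with the isomorphism ${\rm Ext}^1(X, \Sigma^k Y) \cong {\rm Ext}^1(\Sigma^{-k} X, Y)$ already exploited in the proofs of Lemma \ref{cerradura} and Theorem \ref{teobarra}, this yields $\widehat{\mathcal{GF}_0}^\perp = \widehat{\mathcal{GC}}$: for $X \in \widehat{\mathcal{GF}_0}$ and $Y \in \widehat{\mathcal{GF}_0}^\perp$, every term ${\rm Ext}^1(X, \Sigma^k Y)$ of the complex $\overline{\rm Ext}^1(X, Y)$ vanishes, and the reverse inclusion is trivial.

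The substance lies in producing a cogenerating set. Fix an infinite cardinal $\kappa > {\rm Card}(R)$ and set $\mathcal{S} := (\widehat{\mathcal{GF}_0})^{\leq \kappa}$. I would transport the $\mathcal{W}$-pure-submodule machinery of Proposition \ref{subcos} and of \cite[Proposition 7.4.3]{EJ} to $\Cadl$ by declaring a subcomplex $N \subseteq M$ to be \emph{$\widetilde{\mathcal{W}}$-pure} when $W \overline{\otimes} N \to W \overline{\otimes} M$ is monic for every $W \in \widetilde{\mathcal{W}}$. Using Proposition \ref{isomorfos} to translate $\overline{{\rm Tor}}_1$-vanishing into a statement about character complexes, one shows that both $\widetilde{\mathcal{W}}$-pure subcomplexes of Gorenstein-flat complexes and the corresponding quotients are themselves Gorenstein-flat. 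A Kaplansky-style cardinality argument then produces, for every element of an $E \in \widehat{\mathcal{GF}_0}$, a $\widetilde{\mathcal{W}}$-pure subcomplex of cardinality at most $\kappa$ containing it; iterating gives an $\mathcal{S}$-filtration of $E$.

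By Eklof's lemma the $\mathcal{S}$-filtration yields $\mathcal{S}^\perp \subseteq \widehat{\mathcal{GF}_0}^\perp = \widehat{\mathcal{GC}}$, and the reverse inclusion is clear, so $\mathcal{S}$ cogenerates the cotorsion pair $(\mbox{}^\perp \widehat{\mathcal{GC}}, \widehat{\mathcal{GC}})$, which is complete by Eklof--Trlifaj. For any $X \in \mbox{}^\perp \widehat{\mathcal{GC}}$ the completion furnishes a short exact sequence $0 \to C \to A \to X \to 0$ with $C \in \widehat{\mathcal{GC}}$ and $A$ an $\mathcal{S}$-filtered complex, hence $A \in \widehat{\mathcal{GF}_0}$ (the class being closed under transfinite extensions by the same $\overline{{\rm Tor}}_1$-criterion); the sequence splits since $X \in \mbox{}^\perp \widehat{\mathcal{GC}}$ and $\widehat{\mathcal{GF}_0}$ is closed under direct summands, so $X \in \widehat{\mathcal{GF}_0}$. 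This identifies $(\widehat{\mathcal{GF}_0}, \widehat{\mathcal{GC}})$ as an ordinary cotorsion pair, and Theorem \ref{teobarra} promotes it to the bar-cotorsion pair $(\widehat{\mathcal{GF}_0} \mbox{ }|\mbox{ } \widehat{\mathcal{GC}})$. The main obstacle is the third paragraph: transporting the Kaplansky cardinality argument and the $\overline{{\rm Tor}}_1$-vanishing characterization of Gorenstein-flatness cleanly to $\Cadl$ with the bar-tensor product in place of the ordinary one.
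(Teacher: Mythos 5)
Your proposal is correct in outline, but it takes essentially the opposite route from the paper, and it is much heavier than what is needed. The paper settles the only nontrivial point, namely $\mbox{}^{\overline{\perp}}\widehat{\mathcal{GC}} \subseteq \widehat{\mathcal{GF}_0}$, in a few lines by pure duality: by \cite[Theorem 5.4.3]{GR} it suffices to check $\overline{{\rm Tor}}_1(W,X)=0$ for all $W \in \widetilde{\mathcal{W}}$, and Proposition \ref{isomorfos} gives $\overline{{\rm Tor}}_1(W,X)^+ \cong \overline{{\rm Ext}}^1(X,W^+)$, where $W^+$ lies in $\widehat{\mathcal{GC}}$ because $\overline{{\rm Ext}}^1(F,W^+) \cong \overline{{\rm Tor}}_1(W,F)^+ = 0$ for every Gorenstein-flat complex $F$; no suspension closure, no set-theoretic machinery and no completeness are invoked. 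You instead first manufacture the ordinary cotorsion pair $(\widehat{\mathcal{GF}_0},\widehat{\mathcal{GC}})$ via $\widetilde{\mathcal{W}}$-purity, a Kaplansky-type cardinality argument, Eklof--Trlifaj and a retract argument, and then transfer to the bar setting through suspension closure and Theorem \ref{teobarra}; the paper goes the other way, deducing the ordinary cotorsion pair, its cogeneration by $(\widehat{\mathcal{GF}_0})^{\leq\kappa}$ and its completeness \emph{after} (and partly from) this proposition. Your route is legitimate --- the filtration machinery you sketch is exactly what the paper develops in the subsequent propositions (via the argument of \cite[Proposition 4.1]{Aldrich} and \cite[Theorem 5.4.3]{GR}), and it buys you completeness at the same time --- but the step you yourself flag as the main obstacle is precisely the hard content you leave sketched, whereas the duality argument avoids it entirely; note also that both approaches ultimately rest on the same external input, the $\overline{{\rm Tor}}_1$-characterization of Gorenstein-flat complexes. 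One small imprecision in your last paragraph: what Eklof--Trlifaj/Salce actually provides is a middle term filtered by $\mathcal{S}$ together with projective complexes, not by $\mathcal{S}$ alone; this is harmless since projective complexes are Gorenstein-flat and $\widehat{\mathcal{GF}_0}$ is closed under transfinite extensions and direct summands by the Tor-criterion, but it should be said.
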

\begin{proof} We only have to show that $\mbox{}^{\overline{\perp}}\widehat{\mathcal{GC}} \subseteq \widehat{\mathcal{GF}_0}$. Let $X \in \mbox{}^{\overline{\perp}}\widehat{\mathcal{GC}}$. By \cite[Theorem 5.4.3]{GR}, we only need to show $\overline{\rm Tor}_1(W, X) = 0$, for every $W \in \widetilde{\mathcal{W}}$. So let $W \in \widetilde{\mathcal{W}}$. By Proposition \ref{isomorfos}, we have $\overline{\rm Tor}_1(W, X)^+ \cong \overline{\rm Tor}_1(X, W)^+ \cong \overline{\rm Ext}^1(X, W^+)$. Now let $F$ be a Gorenstein-flat complex. Then $\overline{\rm Ext}^1(F, W^+) \cong \overline{\rm Tor}_1(W, F)^+ = 0$. We have $W^+ \in \widehat{\mathcal{GC}}$ and so $\overline{\rm Ext}^1(X, W^+) = 0$. It follows $\overline{\rm Tor}_1(W, X) = 0$ and $X \in \widehat{\mathcal{GF}_0}$. 
\end{proof}

\newpage

Using \cite[Theorem 5.4.3]{GR}, we can show that $\widehat{\mathcal{GF}_0}$ is closed under suspensions. So by the previous proposition, Lemma \ref{cerradura} and Theorem \ref{teobarra}, $(\widehat{\mathcal{GF}_0}, \widehat{\mathcal{GC}})$ is a cotorsion pair. 

Let ${\rm dw}\widetilde{\mathcal{F}_0}$ (resp. ${\rm ex}\widetilde{\mathcal{F}_0}$) denote the class of (resp. exact) chain complexes $X \in \Cadl$ such that $X_m \in \mathcal{F}_0$ for every $m \in \mathbb{Z}$. In \cite[Proposition 4.1]{Aldrich} it is proven that for every chain complex $X \in {\rm dw}\widetilde{\mathcal{F}_0}$ and every $x \in X$ (i.e. $x \in X_m$ for some $m \in \mathbb{Z}$), there exists a sub-complex $S \subseteq X$ in $({\rm dw}\widetilde{\mathcal{F}_0})^{\leq \kappa}$ such that $x \in S$ and $X / S \in {\rm dw}\widetilde{\mathcal{F}_0}$. By $S \in ({\rm dw}\widetilde{\mathcal{F}_0})^{\leq \kappa}$ we mean $S \in {\rm dw}\widetilde{\mathcal{F}_0}$ and ${\rm Card}(S) := \sum_{m \in \mathbb{Z}} {\rm Card}(S_m) \leq \kappa$, where $\kappa$ is an infinite cardinal satisfying $\kappa > {\rm Card}(R)$. The same argument can be applied to the class $\widehat{\mathcal{GF}_0}$, using \cite[Theorem 5.4.3]{GR}, in order to get the following result. 

\vspace{0.5cm}

\begin{proposition} Let $\kappa$ be an infinite cardinal satisfying $\kappa > {\rm Card}(R)$, where $R$ is an Gorenstein ring. Let $X$ be a Gorenstein-flat complex and $x \in X$. Then there exists a Gorenstein-flat sub-complex $S \subseteq X$ with ${\rm Card}(S) \leq \kappa$, such that $x \in S$ and $X / S$ is also Gorenstein-flat. 
\end{proposition}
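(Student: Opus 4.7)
The plan is to imitate the module-theoretic argument of Proposition \ref{subcos} together with the cardinality construction that precedes it, transposed to $\Cadl$ with the bar-tensor product $\overline{\otimes}$ replacing $\otimes_R$. First I would introduce a bar-analog of the $\mathcal{W}$-pure notion: call a sub-complex $S \subseteq X$ a \emph{$\widetilde{\mathcal{W}}$-pure} sub-complex if, for every $W \in \widetilde{\mathcal{W}}$, the induced map $W \overline{\otimes} S \longrightarrow W \overline{\otimes} X$ is a monomorphism. The analog of Proposition \ref{subcos} in $\Cadl$ is then immediate: from $0 \to S \to X \to X/S \to 0$ one obtains a long exact sequence
\[ \cdots \longrightarrow \overline{\rm Tor}_1(W, X) \longrightarrow \overline{\rm Tor}_1(W, X/S) \longrightarrow W \overline{\otimes} S \longrightarrow W \overline{\otimes} X \longrightarrow \cdots, \]
and by \cite[Theorem 5.4.3]{GR} a complex is Gorenstein-flat exactly when $\overline{\rm Tor}_1(W, -)$ vanishes on all of $\widetilde{\mathcal{W}}$; the vanishing of $\overline{\rm Tor}_1(W, X)$ combined with $\widetilde{\mathcal{W}}$-purity forces $\overline{\rm Tor}_1(W, X/S) = 0$, so $X/S \in \widehat{\mathcal{GF}_0}$, and a second pass through the same long exact sequence then yields $\overline{\rm Tor}_1(W, S) = 0$, so $S \in \widehat{\mathcal{GF}_0}$ as well.

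Given this reduction, the main task is to produce a $\widetilde{\mathcal{W}}$-pure sub-complex $S \subseteq X$ with $x \in S$ and ${\rm Card}(S) \leq \kappa$. For this I would follow the standard transfinite recursion of \cite[Proposition 7.4.3]{EJ}, which also underlies \cite[Proposition 4.1]{Aldrich}. Starting from the sub-complex $S_0 \subseteq X$ generated by $x$ together with $\partial^X x$, inductively build a chain $S_0 \subseteq S_1 \subseteq \cdots$ of sub-complexes of $X$ of cardinality $\leq \kappa$ such that every witness to the failure of $\widetilde{\mathcal{W}}$-purity of $S_n$ in $X$ is neutralized inside $S_{n+1}$: for each finite ``bar-tensor witness'' $\sum_i w_i \overline{\otimes} s_i \in \mathrm{Ker}\bigl( W \overline{\otimes} S_n \to W \overline{\otimes} X \bigr)$ arising from some $W$ in a suitably small generating subset of $\widetilde{\mathcal{W}}$, one adjoins enough elements of $X$ so that the same expression already vanishes inside $W \overline{\otimes} S_{n+1}$. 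Setting $S := \bigcup_{n < \omega} S_n$ produces a sub-complex of cardinality at most $\kappa$ which contains $x$ and is $\widetilde{\mathcal{W}}$-pure in $X$, and the first paragraph then delivers both conclusions $S, X/S \in \widehat{\mathcal{GF}_0}$.

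The main technical obstacle lies in the cardinality bookkeeping of this transfinite step: at each stage one must add only $\leq \kappa$-many new elements of $X$ in order to handle \emph{all} witnesses to non-purity simultaneously, so that the chain does not blow up past $\kappa$. Here one exploits $\kappa > {\rm Card}(R)$, the fact that $\widetilde{\mathcal{W}}$ admits a generating set of complexes of cardinality $\leq \kappa$ in the spirit of \cite[Lemma 6.1]{Perez}, and the concrete description of $\overline{\otimes}$ in terms of the standard tensor product modulo boundaries recalled in the preliminaries — together these reduce $\widetilde{\mathcal{W}}$-purity to a purity condition against a set of witnesses of cardinality at most $\kappa$. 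Once this count is performed exactly as in \cite[Proposition 7.4.3]{EJ}, the construction closes at cardinality $\kappa$ and the proof is complete.
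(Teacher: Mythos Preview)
Your approach is correct but takes a more intrinsic route than the paper. The paper gives no detailed proof; it simply observes that the argument of \cite[Proposition 4.1]{Aldrich} for degreewise-flat complexes carries over once one invokes \cite[Theorem 5.4.3]{GR}, whose relevant content here is the \emph{degreewise} characterization: $X \in \widehat{\mathcal{GF}_0}$ if and only if every $X_m$ is a Gorenstein-flat module. With that in hand, one runs the zig-zag construction of \cite{Aldrich} to produce a sub-complex $S$ with each $S_m$ a pure (hence $\mathcal{W}$-pure) sub-module of $X_m$ of cardinality $\leq \kappa$; Proposition \ref{subcos} then gives $S_m, X_m/S_m \in \mathcal{GF}_0$ for all $m$, and the degreewise criterion converts this back into $S, X/S \in \widehat{\mathcal{GF}_0}$. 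Your bar-tensor route instead develops a $\widetilde{\mathcal{W}}$-purity notion directly in $\Cadl$ via $\overline{\otimes}$; this is conceptually clean and would transport to settings lacking a termwise characterization, but here it buys you the extra burden of arguing that $\widetilde{\mathcal{W}}$-purity is detected by a set of test complexes of cardinality $\leq \kappa$ --- a step the paper's degreewise reduction avoids entirely by falling back on ordinary module-level purity one degree at a time.
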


\vspace{0.25cm}

Using this result, every Gorenstein-flat complex has a $(\widehat{\mathcal{GF}_0})^{\leq \kappa}$-filtration, and hence $(\widehat{\mathcal{GF}_0}, \widehat{\mathcal{GC}})$ is a cotorsion pair cogenerated by $(\widehat{\mathcal{GF}_0})^{\leq \kappa}$. Let $\widetilde{\mathcal{F}_0}$ denote the class of flat chain complexes. In \cite[Proposition 4.9]{Gil}, the author proves that $(\widetilde{\mathcal{F}_0}, (\widetilde{\mathcal{F}_0})\mbox{}^\perp)$ is a cotorsion pair cogenerated by $(\widetilde{\mathcal{F}_0})^{\leq \kappa} := \{ F \in \widetilde{\mathcal{F}_0} \mbox{ : } {\rm Card}(F) \leq \kappa \}$. We show that $(\widehat{\mathcal{GF}_0}, \widehat{\mathcal{GC}})$ and $(\widetilde{\mathcal{F}_0}, (\widetilde{\mathcal{F}_0})^\perp)$ are compatible. 

\vspace{0.5cm}

\begin{proposition} $\widetilde{\mathcal{F}_0} = \widehat{\mathcal{GF}_0} \cap \widetilde{\mathcal{W}}$. 
\end{proposition}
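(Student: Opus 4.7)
The plan is to prove the two inclusions separately, handling the easy one directly and the harder one by passing to Pontryagin duals and invoking the analogous statement for Gorenstein-injective complexes already recorded in Section 3.

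For $\widetilde{\mathcal{F}_0}\subseteq\widehat{\mathcal{GF}_0}\cap\widetilde{\mathcal{W}}$, any flat complex $F$ has flat dimension zero and therefore belongs to $\widetilde{\mathcal{F}_n}=\widetilde{\mathcal{W}}$ because $R$ is $n$-Gorenstein. For Gorenstein-flatness I would invoke \cite[Theorem 5.4.3]{GR}, exactly as in the preceding proof in the paper: since $-\overline{\otimes} F$ is exact, $\overline{{\rm Tor}}_1(W,F)=0$ for every $W\in\widetilde{\mathcal{W}}$, whence $F\in\widehat{\mathcal{GF}_0}$.

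For $\widehat{\mathcal{GF}_0}\cap\widetilde{\mathcal{W}}\subseteq\widetilde{\mathcal{F}_0}$, my strategy is that for $X\in\widehat{\mathcal{GF}_0}\cap\widetilde{\mathcal{W}}$ the Pontryagin dual $X^+$ should lie in $\mathcal{GI}_0(\Cadl)\cap\widetilde{\mathcal{W}}=\widetilde{\mathcal{I}_0}$, where the equality is the specialisation to the Gorenstein category $\Cadl$ of the identity $\mathcal{GI}_0(\mathcal{C})\cap\mathcal{W}(\mathcal{C})=\mathcal{I}_0(\mathcal{C})$ recalled in Section 3. To place $X^+$ in $\mathcal{GI}_0(\Cadl)$, I would first upgrade the vanishing of $\overline{{\rm Tor}}_1(W,X)$ provided by \cite[Theorem 5.4.3]{GR} to $\overline{{\rm Tor}}_i(W,X)=0$ for all $W\in\widetilde{\mathcal{W}}$ and $i\geq 1$ by dimension shifting, which is legitimate since $\widetilde{\mathcal{W}}$ is thick and hence closed under syzygies. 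Transferring this through Proposition \ref{isomorfos} yields $\overline{{\rm Ext}}^i(W,X^+)\cong\overline{{\rm Tor}}_i(W,X)^+=0$ for all such $W$ and $i$; extracting the $0$th term gives ${\rm Ext}^1(W,X^+)=0$, so $X^+\in\widetilde{\mathcal{W}}^{\perp}=\mathcal{GI}_0(\Cadl)$. For $X^+\in\widetilde{\mathcal{W}}$, it suffices to dualise any finite flat resolution of $X$, producing a finite injective coresolution of $X^+$, since the Pontryagin dual of a flat complex is injective.

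Once $X^+$ is known to be injective, it remains to deduce that $X$ is flat. Proposition \ref{isomorfos} yields $\overline{{\rm Ext}}^1(Y,X^+)\cong\overline{{\rm Tor}}_1(Y,X)^+$ for every $Y\in\Cadl$, and this vanishes because $X^+$ is injective and $\widetilde{\mathcal{I}_0}$ is closed under suspensions; the faithfulness of the Pontryagin functor then forces $\overline{{\rm Tor}}_1(Y,X)=0$ for every $Y$, so that $-\overline{\otimes} X$ is exact and $X\in\widetilde{\mathcal{F}_0}$. The main technical point I expect to double-check is that the vanishing of $\overline{{\rm Ext}}^1(-,X^+)$ as a complex is genuinely equivalent to the injectivity of $X^+$ as an object of $\Cadl$ (and similarly for $\overline{{\rm Tor}}_1$ and flatness); this reduces to the closure of $\widetilde{\mathcal{I}_0}$ and $\widetilde{\mathcal{F}_0}$ under suspensions, so no information is lost when passing between the barred and unbarred versions.
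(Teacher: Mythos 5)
Your proof is correct and follows essentially the same route as the paper's: both establish the easy inclusion via \cite[Theorem 5.4.3]{GR} and \cite[Theorem 9.1.10]{EJ}, and for the hard inclusion both pass to the Pontryagin dual $X^+$, show $X^+\in\widehat{\mathcal{GI}_0}\cap\widetilde{\mathcal{W}}=\widetilde{\mathcal{I}_0}$ (the membership in $\widetilde{\mathcal{W}}$ by dualising a finite flat resolution of $X$), and then deduce that $X$ is flat. The only cosmetic difference is that you re-derive $X^+\in\widehat{\mathcal{GI}_0}$ by hand through $\overline{{\rm Tor}}$-vanishing and Proposition \ref{isomorfos}, whereas the paper cites \cite[Theorem 5.4.3]{GR} directly for that step.
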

\begin{proof} The inclusion $\widetilde{\mathcal{F}_0} \subseteq \widehat{\mathcal{GF}_0} \cap \widetilde{\mathcal{W}}$ is clear by \cite[Theorem 5.4.3]{GR} and \cite[Theorem 9.1.10]{EJ}. Now let $X \in \widehat{\mathcal{GF}_0} \cap \widetilde{\mathcal{W}}$. Then $X^+ \in \widehat{\mathcal{GI}_0}$ by \cite[Theorem 5.4.3]{GR}, where $\widehat{\mathcal{GI}_0}$ denotes the class of Gorenstein-injective complexes. On the other hand, ${\rm fd}(X) = k < \infty$, and so there exists an exact sequence $0 \longrightarrow F_k \longrightarrow F_{k-1} \longrightarrow \cdots \longrightarrow F_0 \longrightarrow X \longrightarrow 0$ where $F_i$ is flat for every $0 \leq i \leq k$. Then $0 \longrightarrow X^+ \longrightarrow F^+_0  \longrightarrow \cdots \longrightarrow F^+_{k-1} \longrightarrow F^+_k \longrightarrow 0$ is exact since $D^0(\mathbb{Q / Z})$ is an injective complex (notice that ${\rm Ext}^1(Y, D^0(\mathbb{Q / Z})) = 0$ implies that $\overline{\rm Ext}^1(Y, D^0(\mathbb{Q / Z})) = 0$, for every complex $Y$), and $F^+_i$ is an injective complex for every $0 \leq i \leq k$. So ${\rm id}(X^+) \leq k < \infty$ and $X^+ \in \widetilde{\mathcal{W}}$. We have $X^+ \in \widehat{\mathcal{GI}_0} \cap \widetilde{\mathcal{W}} = \widetilde{\mathcal{I}_0}$. It follows $X$ is flat. 
\end{proof} 

The following proposition follows as in Proposition \ref{igualdadesperp}. 

\vspace{0.5cm}

\begin{proposition}\label{cotorsioncomplexeq} $\widehat{\mathcal{GC}} = (\widetilde{\mathcal{F}_0})\mbox{}^\perp \cap \widetilde{\mathcal{W}}$. 
\end{proposition}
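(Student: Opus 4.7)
The plan is to follow the template of Proposition \ref{igualdadesperp}, with the triple $(\widetilde{\mathcal{F}_0}, \widehat{\mathcal{GF}_0}, \widetilde{\mathcal{W}})$ playing the roles formerly held by $(\mathcal{P}_r, \mathcal{GP}_r, \mathcal{W})$. By Theorem \ref{teobarra} applied to $(\widehat{\mathcal{GF}_0} \mbox{ }|\mbox{ } \widehat{\mathcal{GC}})$, I may work throughout with ordinary $\mathrm{Ext}^1$ instead of $\overline{\mathrm{Ext}}^1$, so that $\widehat{\mathcal{GC}} = \widehat{\mathcal{GF}_0}^{\perp}$.

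For the inclusion $\widehat{\mathcal{GC}} \subseteq (\widetilde{\mathcal{F}_0})^\perp \cap \widetilde{\mathcal{W}}$, I would first observe that the preceding proposition gives $\widetilde{\mathcal{F}_0} \subseteq \widehat{\mathcal{GF}_0}$, so passing to orthogonals immediately yields $\widehat{\mathcal{GC}} \subseteq (\widetilde{\mathcal{F}_0})^\perp$. For the second factor, I would use that every Gorenstein-projective complex is Gorenstein-flat (verifiable pointwise via Proposition \ref{Gorcomplexes} together with the corresponding module-level statement, itself a consequence of \cite[Theorem 5.4.3]{GR}), so $\widehat{\mathcal{GP}_0} \subseteq \widehat{\mathcal{GF}_0}$, and the cotorsion pair $(\widehat{\mathcal{GP}_0}, \widetilde{\mathcal{W}})$ recalled in the earlier remark yields $\widehat{\mathcal{GC}} \subseteq \widehat{\mathcal{GP}_0}^\perp = \widetilde{\mathcal{W}}$.

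For the reverse inclusion, let $Y \in (\widetilde{\mathcal{F}_0})^\perp \cap \widetilde{\mathcal{W}}$ and $X \in \widehat{\mathcal{GF}_0}$; the goal is $\mathrm{Ext}^1(X, Y) = 0$. Completeness of $(\widehat{\mathcal{GP}_0}, \widetilde{\mathcal{W}})$ supplies a short exact sequence
\[ 0 \longrightarrow X \longrightarrow W \longrightarrow C \longrightarrow 0 \]
with $W \in \widetilde{\mathcal{W}}$ and $C \in \widehat{\mathcal{GP}_0} \subseteq \widehat{\mathcal{GF}_0}$. Since $\widehat{\mathcal{GF}_0}$ is closed under extensions, $W \in \widehat{\mathcal{GF}_0} \cap \widetilde{\mathcal{W}} = \widetilde{\mathcal{F}_0}$ by the preceding proposition. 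Applying $\mathrm{Ext}(-, Y)$ yields
\[ \mathrm{Ext}^1(W, Y) \longrightarrow \mathrm{Ext}^1(X, Y) \longrightarrow \mathrm{Ext}^2(C, Y), \]
whose left term vanishes because $W \in \widetilde{\mathcal{F}_0}$ and $Y \in (\widetilde{\mathcal{F}_0})^\perp$, and whose right term vanishes by the higher-$\mathrm{Ext}$ orthogonality between $\widehat{\mathcal{GP}_0}$ and $\widetilde{\mathcal{W}}$ (an instance of \cite[Proposition 10.2.6]{EJ}). Therefore $\mathrm{Ext}^1(X, Y) = 0$, so $Y \in \widehat{\mathcal{GC}}$.

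The only ingredients going beyond a word-for-word transcription of Proposition \ref{igualdadesperp} are the inclusion $\widehat{\mathcal{GP}_0} \subseteq \widehat{\mathcal{GF}_0}$ and the fact that $\widehat{\mathcal{GF}_0}$ is closed under extensions; both follow from \cite[Theorem 5.4.3]{GR} applied componentwise, and I expect this to be the only part that requires any verification. The rest is a routine diagram chase.
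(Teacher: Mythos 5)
Your proof is correct and follows essentially the paper's own route: the paper's proof of this proposition is simply the instruction that it ``follows as in Proposition \ref{igualdadesperp}'', and your argument is precisely that adaptation, using completeness of $(\widehat{\mathcal{GP}_0}, \widetilde{\mathcal{W}})$ to produce $0 \to X \to W \to C \to 0$, the equality $\widetilde{\mathcal{F}_0} = \widehat{\mathcal{GF}_0} \cap \widetilde{\mathcal{W}}$ to see $W \in \widetilde{\mathcal{F}_0}$, and the vanishing of ${\rm Ext}^2(C,Y)$ for $C$ Gorenstein-projective and $Y \in \widetilde{\mathcal{W}}$. The two auxiliary facts you single out, $\widehat{\mathcal{GP}_0} \subseteq \widehat{\mathcal{GF}_0}$ and extension-closure of $\widehat{\mathcal{GF}_0}$, indeed hold (the latter also follows at once from $\widehat{\mathcal{GF}_0}$ being the left half of the cotorsion pair $(\widehat{\mathcal{GF}_0}, \widehat{\mathcal{GC}})$ already established via Theorem \ref{teobarra}), so your reduction to ordinary ${\rm Ext}^1$ and the rest of the argument go through.
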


\vspace{0.25cm}

From these results, we have that $(\widehat{\mathcal{GF}_0}, (\widetilde{\mathcal{F}_0})\mbox{}^\perp \cap \widetilde{\mathcal{W}})$ and $(\widehat{\mathcal{GF}_0} \cap \widetilde{\mathcal{W}}, (\widetilde{\mathcal{F}_0})\mbox{}^\perp)$ are compatible and complete. The following result follows by Hovey's correspondence.  

\vspace{0.5cm}

\begin{theorem} If $R$ is a commutative Gorenstein ring, then the two compatible and complete cotorsion pairs \[ (\widehat{\mathcal{GF}_0}, (\widetilde{\mathcal{F}_0})\mbox{}^\perp \cap \widetilde{\mathcal{W}}) \mbox{ \ and \ } (\widehat{\mathcal{GF}_0} \cap \widetilde{\mathcal{W}}, (\widetilde{\mathcal{F}_0})\mbox{}^\perp) \] give rise to a unique Abelian model structure on $\Cadl$, called the \underline{Gorenstein-flat model struc}- \underline{ture}, such that $\widehat{\mathcal{GF}_0}$, $(\widetilde{\mathcal{F}_0})^\perp$ and $\widetilde{\mathcal{W}}$ are the classes of cofibrant, fibrant and trivial objects, respectively. 
\end{theorem}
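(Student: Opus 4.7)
The plan is a direct application of Hovey's correspondence (Theorem 1.1), since nearly all the technical content has been assembled just before the statement. I would set $\mathcal{A} = \widehat{\mathcal{GF}_0}$, $\mathcal{B} = (\widetilde{\mathcal{F}_0})^\perp$ and $\mathcal{D} = \widetilde{\mathcal{W}}$, and then verify the three hypotheses of Hovey's correspondence: the two displayed pairs are compatible complete cotorsion pairs, and $\widetilde{\mathcal{W}}$ is thick.

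First, I would identify the two pairs with cotorsion pairs that have already been established. Using the equality $\widetilde{\mathcal{F}_0} = \widehat{\mathcal{GF}_0} \cap \widetilde{\mathcal{W}}$ proven just above, the pair $(\widehat{\mathcal{GF}_0} \cap \widetilde{\mathcal{W}}, (\widetilde{\mathcal{F}_0})^\perp)$ is nothing but $(\widetilde{\mathcal{F}_0}, (\widetilde{\mathcal{F}_0})^\perp)$, which Gillespie showed to be complete and cogenerated by $(\widetilde{\mathcal{F}_0})^{\leq \kappa}$ in \cite[Proposition 4.9]{Gil}. Using Proposition \ref{cotorsioncomplexeq}, the pair $(\widehat{\mathcal{GF}_0}, (\widetilde{\mathcal{F}_0})^\perp \cap \widetilde{\mathcal{W}})$ is exactly $(\widehat{\mathcal{GF}_0}, \widehat{\mathcal{GC}})$, whose completeness was obtained from the $(\widehat{\mathcal{GF}_0})^{\leq \kappa}$-filtration argument and the Eklof--Trlifaj Theorem. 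Compatibility of the two pairs is then immediate from these two identifications, because they are of the form $(\mathcal{A}, \mathcal{B} \cap \mathcal{D})$ and $(\mathcal{A} \cap \mathcal{D}, \mathcal{B})$ by construction.

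Second, I would verify that $\widetilde{\mathcal{W}}$ is thick. Since $R$ is an $n$-Gorenstein ring, the characterization $\widetilde{\mathcal{W}} = \mathcal{W}(\Cadl)$ established via \cite[Proposition 3.1]{Perez} shows that $\widetilde{\mathcal{W}}$ is exactly the class of chain complexes of finite projective dimension in the Gorenstein category $\Cadl$. This class is thick by the standard Horseshoe/long exact sequence arguments used for $\mathcal{W}$ in $\Modl$, which carry over verbatim to any Gorenstein category (closure under retracts is clear from the fact that projective dimension cannot increase under retracts, and the two-out-of-three property for a short exact sequence follows from the long exact sequence of $\mathrm{Ext}$ together with the equality $FDP(\Cadl) < \infty$).

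With these three ingredients in hand, Hovey's correspondence produces a unique Abelian model structure on $\Cadl$ whose cofibrant objects form $\widehat{\mathcal{GF}_0}$, whose fibrant objects form $(\widetilde{\mathcal{F}_0})^\perp$, and whose trivial objects form $\widetilde{\mathcal{W}}$, exactly as claimed. There is no real obstacle here beyond bookkeeping: all of the nontrivial input (completeness of the two cotorsion pairs, the two class equalities, and Gillespie's flat cotorsion pair on complexes) has been done in the preceding discussion. The only point one must be slightly careful about is confirming that the definition of $\widehat{\mathcal{GC}}$ (phrased via $\overline{\mathrm{Ext}}^1$) matches the right-hand class of the genuine cotorsion pair, which is guaranteed because $\widehat{\mathcal{GF}_0}$ is closed under suspensions by \cite[Theorem 5.4.3]{GR} and Theorem \ref{teobarra} then turns the bar-cotorsion pair into an ordinary cotorsion pair.
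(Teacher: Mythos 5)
Your proposal is correct and follows essentially the same route as the paper: identify the two displayed pairs with $(\widehat{\mathcal{GF}_0}, \widehat{\mathcal{GC}})$ and $(\widetilde{\mathcal{F}_0}, (\widetilde{\mathcal{F}_0})^{\perp})$ via the equalities $\widetilde{\mathcal{F}_0} = \widehat{\mathcal{GF}_0} \cap \widetilde{\mathcal{W}}$ and $\widehat{\mathcal{GC}} = (\widetilde{\mathcal{F}_0})^{\perp} \cap \widetilde{\mathcal{W}}$, use their completeness (both cogenerated by sets of complexes of cardinality at most $\kappa$, with Theorem \ref{teobarra} converting the bar-cotorsion pair into an ordinary one), note that $\widetilde{\mathcal{W}} = \mathcal{W}(\Cadl)$ is thick, and apply Hovey's correspondence. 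Nothing is missing.
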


\vspace{0.5cm}

As we did in the Gorenstein-projective and injective cases, the Gorenstein-flat model structures on $\mbox{}_{\mathbb{Z}_4}{\rm {\bf Mod}}$ and ${\rm {\bf Ch}}(\mbox{}_{\mathbb{Z}_4}{\rm {\bf Mod}})$ are not monoidal.

Now it is time to study the Gorenstein-flat dimension. As we did with the class $\mathcal{GP}_0$, it makes sense to consider left $r$-$\mathcal{GF}_0$-modules and left $r$-$\widehat{\mathcal{GF}_0}$-complexes. 

\vspace{0.5cm}

\begin{definition} {\rm A module (resp. complex) is said to be \underline{Gorenstein-$r$-flat} if it is a left $r$-$\mathcal{GF}_0$-module (resp. $r$-$\widehat{\mathcal{GF}_0}$-complex). Let $\mathcal{GF}_r$ (resp. $\widehat{\mathcal{GF}_r}$) denote the class of Gorentein-$r$-flat modules (resp. complexes) and ${\rm Gfd}(X)$ denote the (left) Gorenstein-flat dimension of the module (or complex) $X$. Note $\mathcal{GF}_r = \{ M \in {\rm Ob}(\Modl) \mbox{ : } {\rm Gfd}(M) \leq r \}$ and $\widehat{\mathcal{GF}_r} = \{ X \in {\rm Ob}(\Cadl) \mbox{ : } {\rm Gfd}(X) \leq r \}$. }
\end{definition}

\vspace{0.25cm}

First, we shall prove that the classes $\mathcal{GF}_r$ and $\widehat{\mathcal{GF}_r}$ are the left halves of two complete cotorsion pairs in $\Modl$ and $\Cadl$, respectively. Using this fact we shall construct a model structure on $\Modl$ (resp. $\Cadl$) where $\mathcal{GF}_r$ (resp. $\widehat{\mathcal{GF}_r}$) is the class of cofibrant objects. 

\vspace{0.5cm}

\begin{proposition} Let $R$ be an $n$-Gorenstein ring. Then $(\mathcal{GF}_r, (\mathcal{GF}_r)^\perp)$ is a cotorsion pair for every $0 < r \leq n$. 
\end{proposition}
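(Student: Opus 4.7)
The plan is to mimic the argument used earlier for $(\mathcal{GP}_r(\mathcal{C}), (\mathcal{GP}_r(\mathcal{C}))^\perp)$. The inclusion $\mathcal{GF}_r \subseteq \mbox{}^\perp((\mathcal{GF}_r)^\perp)$ is automatic, so it is enough to establish $\mbox{}^\perp((\mathcal{GF}_r)^\perp) \subseteq \mathcal{GF}_r$. I would fix $M \in \mbox{}^\perp((\mathcal{GF}_r)^\perp)$ and choose an exact partial flat (or projective) resolution
\[
0 \longrightarrow K \longrightarrow F_{r-1} \longrightarrow \cdots \longrightarrow F_0 \longrightarrow M \longrightarrow 0
\]
of length $r$. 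By the Gorenstein-flat analogue of \cite[Proposition 11.5.7]{EJ}, the whole task reduces to showing that $K$ is Gorenstein-flat, and by \cite[Theorem 10.3.8]{EJ} this in turn amounts to verifying ${\rm Tor}^R_1(W, K) = 0$ for every right $R$-module $W \in \mathcal{W}$.

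The key move is Pontryagin duality: ${\rm Tor}^R_1(W, K)^+ \cong {\rm Ext}^1_R(K, W^+)$, where $W^+$ is now a left $R$-module. Dimension shifting on the first variable along the chosen flat resolution of $M$ gives ${\rm Ext}^1_R(K, W^+) \cong {\rm Ext}^{r+1}_R(M, W^+)$, and dimension shifting on the second variable along an injective coresolution of $W^+$ yields ${\rm Ext}^{r+1}_R(M, W^+) \cong {\rm Ext}^1_R(M, L)$ for some $L \in \Omega^{-r}(W^+)$. It then suffices to verify $L \in (\mathcal{GF}_r)^\perp$, since the hypothesis on $M$ would immediately force ${\rm Ext}^1_R(M, L) = 0$, and hence ${\rm Tor}^R_1(W, K) = 0$, finishing the argument.

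To verify this, fix $N \in \mathcal{GF}_r$ and run the previous gymnastics in reverse: ${\rm Ext}^1_R(N, L) \cong {\rm Ext}^{r+1}_R(N, W^+) \cong {\rm Tor}^R_{r+1}(W, N)^+$. Since $R$ is $n$-Gorenstein, $W \in \mathcal{W}$ has finite injective dimension on the right, and any Gorenstein-flat module $G$ satisfies ${\rm Tor}^R_i(I, G) = 0$ for every injective right module $I$ and every $i \geq 1$ by definition; dimension shifting along a finite injective resolution of $W$ upgrades this to ${\rm Tor}^R_i(W, G) = 0$ for all $i \geq 1$. Applied to a Gorenstein-flat resolution $0 \to G_r \to \cdots \to G_0 \to N \to 0$, one further dimension shift gives ${\rm Tor}^R_{r+1}(W, N) \cong {\rm Tor}^R_1(W, G_r) = 0$, so $L \in (\mathcal{GF}_r)^\perp$.

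The main obstacle is precisely the Tor-orthogonality ${\rm Tor}^R_i(W, G) = 0$ for $G$ Gorenstein-flat, $W \in \mathcal{W}$ on the right, and $i \geq 1$, together with the Gorenstein-flat analogue of \cite[Proposition 11.5.7]{EJ} characterising ${\rm Gfd}(M) \leq r$ by Gorenstein-flatness of the $r$-th syzygy. Both are part of the standard theory over an $n$-Gorenstein ring (using \cite[Theorems 9.1.10 and 10.3.8]{EJ} to interchange finite flat and finite injective dimension on the two sides), but they must be invoked carefully. Once these are in hand, everything else is a routine combination of dimension shifting and the Ext-Tor adjunction via the character functor $(-)^+$.
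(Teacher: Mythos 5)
Your argument is correct and is essentially the paper's own proof: both reduce the statement to $\mbox{}^\perp((\mathcal{GF}_r)^\perp) \subseteq \mathcal{GF}_r$, use the characterization of ${\rm Gfd} \leq r$ by vanishing of ${\rm Tor}^R_{r+1}(W,-)$ for $W \in \mathcal{W}$ (i.e.\ \cite[Proposition 11.7.5]{EJ}), and then combine Pontryagin duality with dimension shifting to land on ${\rm Ext}^1_R(M,L)$ with $L \in \Omega^{-r}(W^+)$, checking $L \in (\mathcal{GF}_r)^\perp$ exactly as the paper checks its cosyzygy $K$. The only differences are cosmetic: you route the reduction through the $r$-th syzygy $K$ and spell out the Tor-orthogonality ${\rm Tor}^R_i(W,G)=0$ that the paper simply quotes from \cite[Theorem 10.3.8]{EJ}.
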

\begin{proof} We only need to show $\mbox{}^\perp((\mathcal{GF}_r)^\perp) \subseteq \mathcal{GF}_r$. Let $M \in \mbox{}^\perp((\mathcal{GF}_r)^\perp)$. We only need to prove that  ${\rm Tor}^R_{r+1}(W, M) = 0$ for every $W \in \mathcal{W}$, by \cite[Proposition 11.7.5]{EJ}. Using the isomorphism given in \cite[Theorem 3.2.1]{EJ}, we have ${\rm Tor}^R_{r+1}(W, M)^+ \cong {\rm Ext}^{r+1}_R(M, W^+) \cong {\rm Ext}^1_R(M, K)$, where $K \in \Omega^{-r}(W^+)$. We show $K \in (\mathcal{GF}_r)^\perp$. Let $N \in \mathcal{GF}_r$. We have ${\rm Ext}^{1}_R(N,K) \cong {\rm Ext}^{1}_R(G,W^+)$, where $G \in \Omega^r(N)$. Since $N \in \mathcal{GF}_r$, we have $G$ is Gorenstein-flat, and so ${\rm Ext}^1_R(G,W^+) \cong {\rm Tor}^R_1(W, G)^+ = 0$. Hence ${\rm Ext}^1_R(N, K) = 0$ for every $N \in \mathcal{GF}_r$, i.e. $K \in (\mathcal{GF}_r)^\perp$. It follows ${\rm Tor}^R_{r+1}(W, M) = 0$ for every $W \in \mathcal{W}$. 
\end{proof}

Given two resolutions 
\begin{align*}
(\ast) & = (0 \longrightarrow E'_r \longrightarrow E'_{r-1} \longrightarrow \cdots \longrightarrow E'_0 \longrightarrow M' \longrightarrow 0), \mbox{ and} \\
(\ast \ast) & = (0 \longrightarrow E_r \longrightarrow E_{r-1} \longrightarrow \cdots \longrightarrow E_0 \longrightarrow M \longrightarrow 0),
\end{align*}
we shall say that ($\ast$) is a \underline{sub-resolution} of ($\ast \ast$) if ($\ast$) is a sub-complex of ($\ast \ast$).

Recall that for every Gorenstein-flat module $E$ and every $x \in E$, one can construct a $\mathcal{W}$-pure sub-module $S \subseteq E$ with ${\rm Card}(S) \leq \kappa$, such that $x \in S$. One can apply the same reasoning to show that every sub-module $E' \subseteq E$ with ${\rm Card}(E') \leq \kappa$ can be embedded into a $\mathcal{W}$-pure sub-module $S \subseteq E$ with ${\rm Card}(S) \leq \kappa$. From this fact, one deduces the following result. 

\vspace{0.5cm}

\begin{lemma}\label{lemaper} Given $\kappa$ an infinite cardinal satisfying $\kappa > {\rm Card}(R)$, where $R$ is an $n$-Gorenstein ring, let $M \in \mathcal{GF}_r$ with a Gorenstein-flat resolution \[ {\rm (1)} = (0 \longrightarrow E_r \longrightarrow E_{r-1} \longrightarrow \cdots \longrightarrow E_0 \longrightarrow M \longrightarrow 0) \] and $N$ be a sub-module of $M$ with ${\rm Card}(N) \leq \kappa$. Then there exists a Gorenstein-flat sub-resolution \[ (2') = 0 \longrightarrow S'_r \longrightarrow S'_{r-1} \longrightarrow \cdots \longrightarrow S'_0 \longrightarrow N' \longrightarrow 0 \] of {\rm (1)} such that $S'_k$ is a $\mathcal{W}$-pure sub-module of $E_k$ and ${\rm Card}(S'_k) \leq \kappa$, for every $0 \leq k \leq r$, and such that $N \subseteq N'$. Moreover, if $N$ has a sub-resolution \[ (2) = 0 \longrightarrow S_r \longrightarrow S_{r-1} \longrightarrow \cdots \longrightarrow S_0 \longrightarrow N \longrightarrow 0 \] of {\rm (1)}, where $S_k$ is a $\mathcal{W}$-pure sub-module of $E_k$ with ${\rm Card}(S_k) \leq \kappa$, for every $0 \leq k \leq r$, then {\rm (2')} can be constructed in such a way that {\rm (2)} is a sub-resolution of {\rm (2')}. 
\end{lemma}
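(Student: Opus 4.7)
The plan is to build $(2')$ as the union of a countable increasing chain of sub-complexes $(S^{(n)}_\bullet)_{n \geq 0}$ of $(1)$, each term of cardinality at most $\kappa$, obtained by alternating two enlargements: \emph{preimage closure}, to eventually force exactness, and \emph{$\mathcal{W}$-pure closure} of each term inside $E_k$, to eventually force $\mathcal{W}$-purity of the limit. Write $d_k : E_k \to E_{k-1}$ for the differentials of $(1)$, with $E_{-1} = M$ and $d_0$ the augmentation.

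For the initialisation, in the "moreover" case set $S^{(0)}_k := S_k$ for every $0 \leq k \leq r$; in the general case, pick representatives in $E_0$ for a generating set of $N$, let $S^{(0)}_0$ be the sub-module they span, and put $S^{(0)}_k := 0$ for $k \geq 1$. For the inductive step, given $S^{(n)}_\bullet$ of termwise cardinality $\leq \kappa$, first, for each $0 \leq k \leq r-1$ and each generator $z$ of $S^{(n)}_k \cap \ker d_k$, pick a $d_{k+1}$-preimage in $E_{k+1}$ (possible by exactness of $(1)$) and adjoin it to $S^{(n)}_{k+1}$; second, apply the $\mathcal{W}$-pure embedding recalled in the paragraph preceding the lemma to enlarge each term to a $\mathcal{W}$-pure sub-module of $E_k$ of cardinality $\leq \kappa$. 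This defines $S^{(n+1)}_\bullet$, and cardinalities remain bounded by $\kappa$ throughout.

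Now set $S'_k := \bigcup_n S^{(n)}_k$ and $N' := d_0(S'_0)$. Cardinality is immediate. Exactness at $S'_k$ for $0 \leq k \leq r-1$: any cycle in $S'_k$ lies in some $S^{(n)}_k$ and acquires a $d_{k+1}$-preimage in $S^{(n+1)}_{k+1} \subseteq S'_{k+1}$ at the next stage; exactness at $S'_r$ is immediate from exactness of $(1)$; surjectivity onto $N'$ is by definition, and $N \subseteq N'$ follows from the initial choice. Each $S^{(n)}_k$ is $\mathcal{W}$-pure in $E_k$ after the second enlargement, and since $W \otimes_R -$ commutes with directed colimits, a directed union of $\mathcal{W}$-pure sub-modules is $\mathcal{W}$-pure; hence $S'_k$ is $\mathcal{W}$-pure in $E_k$, and Proposition \ref{subcos} then gives that $S'_k$ is Gorenstein-flat. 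The containment $S_k \subseteq S'_k$ in the "moreover" case is built in at initialisation.

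The principal technical point, and the step that forces the zig-zag construction rather than a one-shot argument, is the tension between $\mathcal{W}$-purity and being an exact sub-complex: $\mathcal{W}$-pure closure of $S^{(n)}_k$ can create new cycles whose preimages are not yet in $S^{(n)}_{k+1}$, while adjoining such preimages can in turn spoil the $\mathcal{W}$-purity of $S^{(n+1)}_{k+1}$. Alternating the two operations along a countable cofinal chain, combined with the fact that $\mathcal{W}$-purity is preserved under directed unions, is exactly what reconciles both requirements in the limit.
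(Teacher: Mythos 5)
Your overall strategy --- building $(2')$ as a countable directed union of sub-modules obtained by alternating closure operations, and exploiting that $\mathcal{W}$-purity passes to directed unions because $W \otimes_R -$ commutes with directed colimits --- is the route the paper has in mind when it points to \cite[Lemma 5.1]{Perez} for the class $\mathcal{F}_r$ and asserts the same argument works for $\mathcal{GF}_r$. The initialization in both the general and the ``moreover'' case, the invocation of the $\mathcal{W}$-pure enlargement recalled before the lemma, the directed-union argument for $\mathcal{W}$-purity, the appeal to Proposition~\ref{subcos} to get each $S'_k$ Gorenstein-flat, and the cardinality bookkeeping are all in order.

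However, there is a genuine gap: nothing in your construction guarantees that $(2')$ is a sub-complex of $(1)$, which is what ``sub-resolution'' means here. Your inductive step performs only two enlargements: adjoining $d_{k+1}$-preimages of cycles of $S^{(n)}_k$ (which pushes elements \emph{upward} in degree, and whose images land harmlessly in $S^{(n)}_k$) and then taking the $\mathcal{W}$-pure closure of each term inside $E_k$. The second operation is oblivious to $E_{k-1}$: it can adjoin elements $s \in S^{(n+1)}_k$ with $d_k(s) \notin S^{(n+1)}_{k-1}$, and no later stage ever adds $d_k(s)$ to the $(k-1)$-st term. In the limit $d_k(S'_k) \subseteq S'_{k-1}$ can therefore fail, so $(2')$ need not be a sub-complex; in particular the statement ``exactness at $S'_k$'' is not even well-posed, and the claim that exactness at $S'_r$ is ``immediate'' silently uses the missing sub-complex property. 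The fix is to interleave a third closure in the zig-zag: at each stage, for every $1 \leq k \leq r$, also adjoin $d_k(S^{(n)}_k)$ to the $(k-1)$-st term (say, before taking preimages and purifying). With this third operation added to the cycle, the limit is a sub-complex, and your exactness, $\mathcal{W}$-purity and cardinality arguments go through unchanged.
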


\vspace{0.25cm}

The previous lemma is proven in \cite[Lemma 5.1]{Perez} for the class $\mathcal{F}_r$ of modules with flat dimension $\leq r$. The same argument works for $\mathcal{GF}_r$. It follows that every Gorenstein-$r$-flat module has a $(\mathcal{GF}_r)^{\leq \kappa}$-filtration. As a consequence of \cite[Proposition 3.2]{Perez}, we have obtained the following theorem.

\vspace{0.5cm}

\begin{theorem} If $R$ is an $n$-Gorenstein ring, then $(\mathcal{GF}_r, (\mathcal{GF}_r)^\perp)$ is a complete cotorsion pair for every $0 < r \leq n$. 
\end{theorem}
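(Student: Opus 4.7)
The strategy is to prove that the cotorsion pair $(\mathcal{GF}_r, (\mathcal{GF}_r)^\perp)$ is cogenerated by the set $(\mathcal{GF}_r)^{\leq \kappa} := \{ N \in \mathcal{GF}_r \mbox{ : } {\rm Card}(N) \leq \kappa \}$, and then appeal to the Eklof and Trlifaj Theorem to obtain completeness. By \cite[Proposition 3.2]{Perez} (the filtration criterion already used repeatedly in the paper), this reduces to producing, for every $M \in \mathcal{GF}_r$, a continuous chain $\{M_\alpha\}_{\alpha < \lambda}$ of sub-modules of $M$ whose consecutive quotients $M_{\alpha+1}/M_\alpha$ lie in $(\mathcal{GF}_r)^{\leq \kappa}$.

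To build such a filtration, I would proceed by transfinite induction, using Lemma \ref{lemaper} as the basic step. Fix once and for all a Gorenstein-flat resolution $(1) = (0 \longrightarrow E_r \longrightarrow \cdots \longrightarrow E_0 \longrightarrow M \longrightarrow 0)$ of $M$ and well-order $M = \{x_\beta \mbox{ : } \beta < \mu\}$. Simultaneously construct $\{M_\alpha\}$ together with sub-resolutions $(2_\alpha) = (0 \longrightarrow S^\alpha_r \longrightarrow \cdots \longrightarrow S^\alpha_0 \longrightarrow M_\alpha \longrightarrow 0)$ of $(1)$ such that each $S^\alpha_k$ is a $\mathcal{W}$-pure sub-module of $E_k$. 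Set $M_0 = 0$ and take unions at limit ordinals (unions of $\mathcal{W}$-pure sub-modules remain $\mathcal{W}$-pure because $-\otimes_R W$ commutes with directed colimits). At a successor stage $\alpha + 1$, consider the quotient complex $(1)/(2_\alpha)$: by Proposition \ref{subcos}, each $E_k / S^\alpha_k$ is Gorenstein-flat, so $(1)/(2_\alpha)$ is a Gorenstein-flat resolution of $M / M_\alpha$ of length $r$. Applying Lemma \ref{lemaper} to $M / M_\alpha$ with this resolution, and to the cyclic sub-module generated by the class of $x_\alpha$, yields a Gorenstein-flat sub-resolution of cardinality $\leq \kappa$ whose terms are $\mathcal{W}$-pure in the $E_k/S^\alpha_k$. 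Pulling back along the quotient map $M \longrightarrow M/M_\alpha$ defines $M_{\alpha + 1}$, the extended sub-resolution $(2_{\alpha + 1})$, and ensures $x_\alpha \in M_{\alpha + 1}$.

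The main obstacle is precisely the coherent bookkeeping of the $\mathcal{W}$-purity condition through successor and limit stages, because this is what guarantees both that the quotients $M_{\alpha+1}/M_\alpha$ inherit the bound $\mathcal{GF}_r$ from the lemma and that the induction can be iterated indefinitely. Proposition \ref{subcos} does the heavy lifting at the successor step (quotients by $\mathcal{W}$-pure sub-modules stay in $\mathcal{GF}_0$), and the moreover clause of Lemma \ref{lemaper} ensures compatibility of the new sub-resolution with the previous one. Since $x_\alpha \in M_{\alpha + 1}$ for every $\alpha$, the chain stabilizes at some $\lambda \leq \mu$ with $M_\lambda = M$, giving the desired $(\mathcal{GF}_r)^{\leq \kappa}$-filtration. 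Combined with \cite[Proposition 3.2]{Perez} and the Eklof and Trlifaj Theorem, this completes the proof.
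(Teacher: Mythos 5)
Your proof takes the same route as the paper: build a $(\mathcal{GF}_r)^{\leq \kappa}$-filtration of any $M \in \mathcal{GF}_r$ by transfinite induction using Lemma \ref{lemaper} and Proposition \ref{subcos}, then invoke Proposition 3.2 of \cite{Perez} together with the Eklof--Trlifaj theorem. The paper states this very tersely (it defers the transfinite induction entirely to the argument of \cite[Lemma 5.1]{Perez} for $\mathcal{F}_r$, asserting ``the same argument works''), so your quotient-and-pullback treatment of the successor step is a legitimate way to fill in that argument; the one small inaccuracy is that the ``moreover'' clause of Lemma \ref{lemaper} is not what keeps the induction coherent --- its hypothesis $\mathrm{Card}(S_k) \leq \kappa$ fails after $\kappa$ stages --- rather, the $\mathcal{W}$-purity of the pulled-back $S^{\alpha+1}_k$ inside $E_k$ follows from the inductive hypothesis on $S^{\alpha}_k$ combined with the purity of $S^{\alpha+1}_k/S^{\alpha}_k$ inside $E_k/S^{\alpha}_k$, i.e.\ by transitivity of $\mathcal{W}$-purity, which is exactly the bookkeeping you correctly identify as the crux of the construction.
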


\newpage

The pairs $(\mathcal{F}_r, (\mathcal{F}_r)^\perp)$ and $(\mathcal{GF}_r, (\mathcal{GF}_r)^\perp)$ are complete and also compatible. As we did in Corollary \ref{coroproj} and Proposition \ref{igualdadesperp}, we can show the following equalities. 

\vspace{0.25cm}

\begin{proposition} If $R$ is an $n$-Gorenstein ring, then $\mathcal{F}_r = \mathcal{GF}_r \cap \mathcal{W}$ and $(\mathcal{GF}_r)^\perp = (\mathcal{F}_r)^\perp \cap \mathcal{W}$ for every $0 < r \leq n$. 
\end{proposition}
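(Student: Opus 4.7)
My plan is to mirror the arguments of Proposition~\ref{coroproj} and Proposition~\ref{igualdadesperp}, with the cotorsion pair $(\mathcal{GP}_0, \mathcal{W})$ played now by the Hovey--Gillespie complete cotorsion pair $(\mathcal{GF}_0, \mathcal{GC})$ recalled at the beginning of this section, together with the base-case compatibility $\mathcal{F}_0 = \mathcal{GF}_0 \cap \mathcal{W}$ and $\mathcal{GC} = \mathcal{C} \cap \mathcal{W}$ (with $\mathcal{C} := (\mathcal{F}_0)^\perp$ the class of cotorsion modules). The characterization of $\mathcal{GF}_r$ through truncated flat resolutions from \cite[Proposition 11.7.5]{EJ} and the closure of $\mathcal{GF}_0$ under first syzygies over an $n$-Gorenstein ring (obtainable, for instance, via Pontryagin duality with the corresponding closure property for $\mathcal{GI}_0$ under cosyzygies) will be the two standing tools.

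For the first equality, the inclusion $\mathcal{F}_r \subseteq \mathcal{GF}_r \cap \mathcal{W}$ is immediate because every flat module is Gorenstein-flat and because $\mathcal{F}_r \subseteq \mathcal{F}_n \subseteq \mathcal{W}$ over an $n$-Gorenstein ring. For the converse, given $M \in \mathcal{GF}_r \cap \mathcal{W}$, I would truncate a flat resolution $0 \to G \to F_{r-1} \to \cdots \to F_0 \to M \to 0$ of $M$; by \cite[Proposition 11.7.5]{EJ} the kernel $G$ lies in $\mathcal{GF}_0$, and by thickness of $\mathcal{W}$ it also lies in $\mathcal{W}$, so the base-case Hovey--Gillespie identity forces $G \in \mathcal{GF}_0 \cap \mathcal{W} = \mathcal{F}_0$. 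Hence $G$ is flat and ${\rm fd}(M) \leq r$.

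For the second equality the inclusion $\subseteq$ follows from $\mathcal{F}_r \subseteq \mathcal{GF}_r$ together with $\mathcal{GF}_0 \subseteq \mathcal{GF}_r$ and $(\mathcal{GF}_0)^\perp = \mathcal{GC} \subseteq \mathcal{W}$. For $\supseteq$, let $Y \in (\mathcal{F}_r)^\perp \cap \mathcal{W}$; the decisive preliminary step is to notice that $\mathcal{F}_0 \subseteq \mathcal{F}_r$ forces $Y \in (\mathcal{F}_0)^\perp = \mathcal{C}$, whence $Y \in \mathcal{C} \cap \mathcal{W} = \mathcal{GC}$ by Hovey--Gillespie. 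Given an arbitrary $X \in \mathcal{GF}_r$, I would then invoke the completeness of $(\mathcal{GF}_0, \mathcal{GC})$ to produce a special preenvelope $0 \to X \to N \to E \to 0$ with $E \in \mathcal{GF}_0$ and $N \in \mathcal{GC} \subseteq \mathcal{W}$. Since $N$ is an extension of $X \in \mathcal{GF}_r$ by $E \in \mathcal{GF}_0 \subseteq \mathcal{GF}_r$, it lies in $\mathcal{GF}_r$, and the first equality applied to $N$ yields $N \in \mathcal{F}_r$. The long exact sequence of ${\rm Ext}_R^\ast(-,Y)$ then produces a fragment
\[ {\rm Ext}^1_R(N,Y) \longrightarrow {\rm Ext}^1_R(X,Y) \longrightarrow {\rm Ext}^2_R(E,Y), \]
whose left term vanishes because $N \in \mathcal{F}_r$ and $Y \in (\mathcal{F}_r)^\perp$, and whose right term vanishes by dimension shifting: ${\rm Ext}^2_R(E,Y) \cong {\rm Ext}^1_R(\Omega E, Y) = 0$ since $\Omega E \in \mathcal{GF}_0$ and $Y \in \mathcal{GC} = (\mathcal{GF}_0)^\perp$. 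Hence ${\rm Ext}^1_R(X,Y) = 0$ as required.

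The main obstacle, and the only point at which the argument does not transfer verbatim from Proposition~\ref{igualdadesperp}, is that $(\mathcal{GF}_0, \mathcal{W})$ is \emph{not} a cotorsion pair in the Gorenstein-flat setting, so the Ext vanishing ${\rm Ext}^{\geq 1}_R(\mathcal{GF}_0, \mathcal{W}) = 0$ that \cite[Proposition 10.2.6]{EJ} supplied in the Gorenstein-projective case is simply not available here for an arbitrary $Y \in \mathcal{W}$. The workaround is the cotorsion upgrade $Y \in (\mathcal{F}_r)^\perp \cap \mathcal{W} \Rightarrow Y \in \mathcal{GC}$, which moves $Y$ into the right half of the Hovey--Gillespie pair and reinstates precisely the Ext vanishing needed to close the long exact sequence argument.
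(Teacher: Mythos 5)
Your proof is correct and follows essentially the route the paper intends: it carries out the mirroring of Propositions \ref{coroproj} and \ref{igualdadesperp} that the text invokes, with the complete Hovey--Gillespie pair $(\mathcal{GF}_0, \mathcal{GC})$ and the base-case identities $\mathcal{GF}_0 \cap \mathcal{W} = \mathcal{F}_0$ and $\mathcal{GC} = (\mathcal{F}_0)^\perp \cap \mathcal{W}$ playing the roles of $(\mathcal{GP}_0, \mathcal{W})$ and Hovey's Corollary 8.5, and with \cite[Proposition 11.7.5]{EJ} replacing \cite[Proposition 11.5.7]{EJ}. Your remark that the ${\rm Ext}^2$-vanishing requires $Y \in \mathcal{GC}$ rather than merely $Y \in \mathcal{W}$, and that the hypothesis $Y \in (\mathcal{F}_r)^\perp \cap \mathcal{W} \subseteq (\mathcal{F}_0)^\perp \cap \mathcal{W} = \mathcal{GC}$ supplies exactly this, is the correct adjustment at the one point where the analogy with \cite[Proposition 10.2.6]{EJ} is not verbatim.
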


\vspace{0.5cm}

\begin{theorem}\label{gorrflatmodel} If $R$ is an $n$-Gorenstein ring, then the two compatible and complete cotorsion pairs \[ (\mathcal{GF}_r, (\mathcal{F}_r)\mbox{}^\perp \cap \mathcal{W}) \mbox{ \ and \ } (\mathcal{GF}_r \cap \mathcal{W}, (\mathcal{F}_r)^\perp) \] give rise to a unique Abelian model structure on $\Modl$, called the \underline{Gorenstein-$r$-flat model structure}, such that $\mathcal{GF}_r$, $(\mathcal{F}_r)^\perp$ and $\mathcal{W}$ are the classes of cofibrant, fibrant and trivial objects, respectively. 
\end{theorem}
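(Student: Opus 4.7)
The plan is to invoke Hovey's correspondence (Theorem 1.0.1 of the excerpt) directly: all the technical work has already been carried out in the preceding results, so this theorem is a packaging statement. I would proceed in four short steps.

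First, I would verify the hypotheses of Hovey's correspondence. The class $\mathcal{W}$ is thick, as recalled in Section 2 (thickness of modules of finite projective dimension over a Gorenstein ring). Next, the two cotorsion pairs $(\mathcal{GF}_r, (\mathcal{GF}_r)^\perp)$ and $(\mathcal{F}_r,(\mathcal{F}_r)^\perp)$ are complete: the former by the theorem immediately preceding this one (the $(\mathcal{GF}_r)^{\leq \kappa}$-filtration argument via Lemma \ref{lemaper} combined with \cite[Proposition 3.2]{Perez} and the Eklof--Trlifaj Theorem), and the latter by \cite[Theorem 7.4.6]{EJ} (also recalled just before the $\mathcal{GF}_r$ discussion).

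Second, I would use the previous proposition, which asserts the two equalities
\[
\mathcal{F}_r = \mathcal{GF}_r \cap \mathcal{W}, \qquad (\mathcal{GF}_r)^\perp = (\mathcal{F}_r)^\perp \cap \mathcal{W},
\]
to rewrite the two complete cotorsion pairs above as
\[
(\mathcal{GF}_r, (\mathcal{F}_r)^\perp \cap \mathcal{W}) \quad \text{and} \quad (\mathcal{GF}_r \cap \mathcal{W}, (\mathcal{F}_r)^\perp).
\]
These are precisely of the shape $(\mathcal{A}, \mathcal{B}\cap \mathcal{D})$ and $(\mathcal{A}\cap \mathcal{D}, \mathcal{B})$ required by Hovey's correspondence, with $\mathcal{A} = \mathcal{GF}_r$, $\mathcal{B} = (\mathcal{F}_r)^\perp$, and $\mathcal{D} = \mathcal{W}$; hence they are compatible in Hovey's sense.

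Third, since the two pairs are compatible, complete, and $\mathcal{W}$ is thick, Hovey's correspondence yields a unique Abelian model structure on $\Modl$ whose cofibrant objects are the elements of $\mathcal{A} = \mathcal{GF}_r$, whose fibrant objects are the elements of $\mathcal{B} = (\mathcal{F}_r)^\perp$, and whose trivial objects are those of $\mathcal{D} = \mathcal{W}$. This gives the claimed Gorenstein-$r$-flat model structure.

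There is no genuine obstacle at this stage: the substantive content was in establishing completeness of $(\mathcal{GF}_r, (\mathcal{GF}_r)^\perp)$ via $\mathcal{W}$-purity and the filtration Lemma \ref{lemaper}, and in verifying the compatibility identities in the preceding proposition. The only care needed here is to make sure the two rewritten pairs match the hypothesis of Hovey's theorem verbatim, which they do by the explicit equalities recalled above.
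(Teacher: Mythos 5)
Your proposal is correct and follows essentially the same route as the paper: rewrite the pairs $(\mathcal{GF}_r,(\mathcal{GF}_r)^\perp)$ and $(\mathcal{F}_r,(\mathcal{F}_r)^\perp)$ via the equalities $\mathcal{F}_r = \mathcal{GF}_r \cap \mathcal{W}$ and $(\mathcal{GF}_r)^\perp = (\mathcal{F}_r)^\perp \cap \mathcal{W}$, cite their completeness (the filtration theorem and \cite[Theorem 7.4.6]{EJ}), note that $\mathcal{W}$ is thick, and apply Hovey's correspondence. Nothing is missing.
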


\vspace{0.5cm}

Now we want to apply the same study to the category of complexes $\Cadl$. As we did in the case $r = 0$, suppose that $R$ is a commutative $n$-Gorenstein ring. The following result is a version of \cite[Proposition 11.7.5]{EJ} for the category of chain complexes. 

\vspace{0.5cm} 

\begin{proposition}\label{gorrflatcomplex} The following conditions are equivalent for any chain complex $X \in {\rm Ob}(\Cadl)$ over a commutative $n$-Gorenstein ring $R$:
\begin{itemize}
\item[ {\bf (1)}] $X$ is a Gorenstein-$r$-flat complex.

\item[ {\bf (2)}] $\overline{\rm Tor}_i(W, X) = 0$ for all $i \geq r+1$ and all $W \in \widetilde{\mathcal{W}}$.

\item[ {\bf (3)}] $\overline{\rm Tor}_{r+1}(W, X) = 0$ for all $W \in \widetilde{\mathcal{W}}$.

\item[ {\bf (4)}] Every $r$th Gorenstein-flat syzygy is Gorenstein-flat.

\item[ {\bf (5)}] Every $r$th flat syzygy is Gorenstein-flat.

\item[ {\bf (6)}] $X_m$ is a Gorenstein-$r$-flat module for every $m \in \mathbb{Z}$.

\item[ {\bf (7)}] $X^+$ is a Gorenstein-$r$-injective complex. 
\end{itemize} 
\end{proposition}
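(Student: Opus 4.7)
The plan is to establish that the seven conditions are equivalent by a cycle of implications, in close analogy with the module version \cite[Proposition 11.7.5]{EJ}, lifted to $\Cadl$ via the bar-tensor product and the natural isomorphisms of Proposition \ref{isomorfos}.

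The backbone of the argument is the cycle $(1) \Rightarrow (2) \Rightarrow (3) \Rightarrow (5) \Rightarrow (1)$. For $(1) \Rightarrow (2)$, given an exact Gorenstein-flat resolution $0 \longrightarrow E_r \longrightarrow \cdots \longrightarrow E_0 \longrightarrow X \longrightarrow 0$, the complex version of \cite[Theorem 5.4.3]{GR} yields $\overline{{\rm Tor}}_i(W, E_k) = 0$ for every $i \geq 1$, every $W \in \widetilde{\mathcal{W}}$, and every $k$; dimension shifting along the short exact sequences that decompose this resolution then gives $\overline{{\rm Tor}}_i(W, X) = 0$ for all $i \geq r+1$. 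The step $(2) \Rightarrow (3)$ is immediate, and $(3) \Rightarrow (5)$ runs the same dimension shift in reverse on a flat resolution of $X$: if $K$ is an $r$th flat syzygy, then $\overline{{\rm Tor}}_1(W, K) \cong \overline{{\rm Tor}}_{r+1}(W, X) = 0$, hence $K \in \widehat{\mathcal{GF}_0}$ again by \cite[Theorem 5.4.3]{GR}. The step $(5) \Rightarrow (1)$ just truncates this flat resolution at the $r$th syzygy.

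Next come the auxiliary equivalences. Condition (4) slots in easily: $(1) \Rightarrow (4)$ is one more dimension shift against any Gorenstein-flat resolution ending at $X$, and $(4) \Rightarrow (5)$ is automatic since every flat resolution is in particular a Gorenstein-flat resolution. For $(1) \Leftrightarrow (6)$ I would prove the $r$-dimensional analogue of Proposition \ref{Gorcomplexes}: restricting a Gorenstein-flat resolution of $X$ of length $r$ to degree $m$ yields a Gorenstein-flat resolution of $X_m$ of length $r$, while conversely, if each $X_m$ is Gorenstein-$r$-flat, then the $r$th flat syzygy of a flat resolution of $X$ has degree-wise Gorenstein-flat terms and hence, by \cite[Theorem 5.4.3]{GR}, is itself a Gorenstein-flat complex. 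Finally, $(3) \Leftrightarrow (7)$ comes from Proposition \ref{isomorfos}: the isomorphism $\overline{{\rm Ext}}^{r+1}(W, X^+) \cong \overline{{\rm Tor}}_{r+1}(W, X)^+$, together with the faithfulness of the Pontryagin functor on chain complexes, reduces (3) to the $\overline{{\rm Ext}}$-vanishing characterization of $\widehat{\mathcal{GI}_r}$, which is the exact dual of $(1) \Leftrightarrow (3)$ and is proved by the same bar-dimension-shift argument.

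The main obstacle I anticipate is the bookkeeping for the bar-derived functors: unlike ${\rm Tor}$ and ${\rm Ext}$, the long exact sequences for $\overline{{\rm Tor}}_i$ and $\overline{{\rm Ext}}^i$ need to be verified in this setting so that dimension shifting really works, and the characterization stating $E \in \widehat{\mathcal{GF}_0}$ if and only if $\overline{{\rm Tor}}_1(W, E) = 0$ for all $W \in \widetilde{\mathcal{W}}$ must be invoked in a form that persists through successive syzygies—for this I would lean on \cite[Theorem 5.4.3]{GR} at each step. Once that machinery is in place, everything else is a fairly mechanical transcription of the module proof.
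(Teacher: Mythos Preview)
Your proposal is correct and is exactly what the paper has in mind: the paper does not actually prove this proposition, merely announcing it as ``a version of \cite[Proposition 11.7.5]{EJ} for the category of chain complexes'' and moving on. Your cycle of implications, with the bar-derived-functor dimension shifts and the repeated appeals to \cite[Theorem 5.4.3]{GR} and Proposition \ref{isomorfos}, supplies precisely the routine adaptation the paper leaves implicit.
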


\vspace{0.5cm}

\begin{proposition} If $R$ is a commutative $n$-Gorenstein ring, then $(\widehat{\mathcal{GF}_r} \mbox{ }|\mbox{ } (\widehat{\mathcal{GF}_r})\mbox{}^{\overline{\perp}})$ is a bar-cotorsion pair for every $0 < r \leq n$. 
\end{proposition}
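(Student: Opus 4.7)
The plan is to imitate, almost verbatim, the proof of the preceding proposition establishing $(\mathcal{GF}_r, (\mathcal{GF}_r)^\perp)$ as a cotorsion pair in $\Modl$, but carried out with the bar-derived functors $\overline{\rm Ext}$ and $\overline{\rm Tor}$ in place of the usual ones. The containment $\widehat{\mathcal{GF}_r} \subseteq {}^{\overline{\perp}}((\widehat{\mathcal{GF}_r})^{\overline{\perp}})$ is automatic from the definition of $(\widehat{\mathcal{GF}_r})^{\overline{\perp}}$, so the entire content is the reverse inclusion ${}^{\overline{\perp}}((\widehat{\mathcal{GF}_r})^{\overline{\perp}}) \subseteq \widehat{\mathcal{GF}_r}$.

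So I would fix $X \in {}^{\overline{\perp}}((\widehat{\mathcal{GF}_r})^{\overline{\perp}})$ and, by the equivalence (1) $\Leftrightarrow$ (3) of Proposition \ref{gorrflatcomplex}, reduce $X \in \widehat{\mathcal{GF}_r}$ to showing $\overline{\rm Tor}_{r+1}(W,X) = 0$ for every $W \in \widetilde{\mathcal{W}}$. Applying Proposition \ref{isomorfos} (together with commutativity of $R$) I rewrite
\[
\overline{\rm Tor}_{r+1}(W,X)^{+} \;\cong\; \overline{\rm Ext}^{\,r+1}(X, W^{+}),
\]
and then dimension-shift along an $r$-step injective coresolution $0 \to W^{+} \to I^{0} \to \cdots \to I^{r-1} \to K \to 0$ (which exists in $\Cadl$ since this Gorenstein category has enough injectives) to obtain $\overline{\rm Ext}^{\,r+1}(X,W^{+}) \cong \overline{\rm Ext}^{1}(X, K)$, where $K = \Omega^{-r}(W^{+})$. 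Thus everything comes down to showing $K \in (\widehat{\mathcal{GF}_r})^{\overline{\perp}}$, since then the hypothesis on $X$ forces $\overline{\rm Ext}^{1}(X,K) = 0$ and hence $\overline{\rm Tor}_{r+1}(W,X) = 0$.

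The heart of the argument is therefore the verification that $K$ is bar-orthogonal to $\widehat{\mathcal{GF}_r}$. Given $N \in \widehat{\mathcal{GF}_r}$, I would pick an $r$-step left flat resolution $0 \to G \to F_{r-1} \to \cdots \to F_{0} \to N \to 0$ in $\Cadl$; the $r$th flat syzygy $G$ is Gorenstein-flat by Proposition \ref{gorrflatcomplex}, condition (5). Dimension-shifting on the first argument (using that the $F_i$, being flat in $\Cadl$, are even projective in the sense relevant to $\overline{\rm Ext}$ — more precisely, using that projective complexes vanish against $\overline{\rm Ext}^{\geq 1}$) gives
\[
\overline{\rm Ext}^{1}(N,K) \;\cong\; \overline{\rm Ext}^{\,r+1}(N, W^{+}) \;\cong\; \overline{\rm Ext}^{1}(G, W^{+}).
\]
A final application of Proposition \ref{isomorfos} converts this to $\overline{\rm Tor}_{1}(W,G)^{+}$, and this vanishes because $G \in \widehat{\mathcal{GF}_0}$ and $W \in \widetilde{\mathcal{W}}$, again by Proposition \ref{gorrflatcomplex} (condition (2) with $r=0$).

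The main obstacle I anticipate is the bookkeeping required to justify the two dimension-shifting isomorphisms for the \emph{bar}-derived functors rather than the classical ones. One has to check that the standard long-exact-sequence machinery — extracting $\overline{\rm Ext}^{\,i+r}(Y,A) \cong \overline{\rm Ext}^{\,i}(Y,B)$ from a length-$r$ injective coresolution $A \hookrightarrow I^{\bullet} \twoheadrightarrow B$, and its dual for projective resolutions — is valid for $\overline{\rm Ext}$. This is true because $\overline{\rm Ext}^{i}$ is the $i$th right derived functor of $\overline{\rm Hom}$ and produces long exact sequences in each argument in the usual way; in particular, projective (resp.\ injective) complexes in $\Cadl$ are acyclic for $\overline{\rm Ext}^{\,\geq 1}(-,\,\cdot\,)$ (resp.\ $\overline{\rm Ext}^{\,\geq 1}(\,\cdot\,,-)$). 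Once these routine facts are in hand, the proof is essentially formal and parallels the module-theoretic argument given just above.
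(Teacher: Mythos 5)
Your argument follows the paper's proof almost step for step: reduce membership of $X$ in $\widehat{\mathcal{GF}_r}$ to the vanishing of $\overline{\rm Tor}_{r+1}(W,X)$ for all $W \in \widetilde{\mathcal{W}}$ via Proposition \ref{gorrflatcomplex}, rewrite this through Proposition \ref{isomorfos} as $\overline{\rm Ext}^1(X,K)$ with $K$ an $r$th cosyzygy of $W^+$, and then show $K \in (\widehat{\mathcal{GF}_r})^{\overline{\perp}}$. The only divergence is in that last verification: the paper concludes directly, for $Z \in \widehat{\mathcal{GF}_r}$, that $\overline{\rm Ext}^1(Z,K) \cong \overline{\rm Ext}^{r+1}(Z,W^+) \cong \overline{\rm Tor}_{r+1}(W,Z)^+ = 0$ by Proposition \ref{gorrflatcomplex}, whereas you add a further dimension shift in the first variable down to an $r$th flat syzygy $G$ of $N$ and use $\overline{\rm Tor}_1(W,G)=0$. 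That detour is harmless in substance, but the justification you give for it is not right: flat complexes are \emph{not} acyclic for $\overline{\rm Ext}^{\geq 1}(-,\cdot)$ against arbitrary second arguments (a complex $F$ with $\overline{\rm Ext}^1(F,Y)=0$ for all $Y$ already has ${\rm Ext}^1(F,Y)=0$ for all $Y$, hence is projective, and flat complexes such as $D^1(\mathbb{Q})$ over $\mathbb{Z}$ are not projective). The shift is nevertheless valid in your situation because the second argument is the character complex $W^+$, for which $\overline{\rm Ext}^i(F_j,W^+) \cong \overline{\rm Tor}_i(W,F_j)^+ = 0$ by Proposition \ref{isomorfos} and flatness of $F_j$; alternatively, take a projective resolution of $N$ (its $r$th syzygy is still an $r$th flat syzygy, hence Gorenstein-flat by Proposition \ref{gorrflatcomplex}), or simply omit the shift as the paper does.
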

\begin{proof} We only need to prove the inclusion $\mbox{}^{\overline{\perp}}((\widehat{\mathcal{GF}_r})\mbox{}^{\overline{\perp}}) \subseteq \widehat{\mathcal{GF}_r}$. Let $X \in \mbox{}^{\overline{\perp}}( ( \widehat{\mathcal{GF}_r})\mbox{}^{\overline{\perp}})$. We want to show that $\overline{\rm Tor}_{r+1}(W, X) = 0$ for every $W \in \widetilde{\mathcal{W}}$. By Proposition \ref{isomorfos}, we have $\overline{{\rm Tor}}_{r+1}(W, X)^+ \cong \overline{\rm Tor}_{r+1}(X, W)^+ \cong \overline{\rm Ext}^{r+1}(X, W^+) \cong \overline{\rm Ext}^1(X, K)$, where $K$ is an $r$th cosyzygy of $W^+$. Let $Z \in \widehat{\mathcal{GF}_r}$. Then $\overline{\rm Ext}^{1}(Z,K) = \overline{\rm Ext}^{r+1}(Z, W^+) \cong \overline{\rm Tor}_{r+1}(W, Z)^+ = 0$, by Proposition \ref{gorrflatcomplex}. So $K \in (\widehat{\mathcal{GF}_r})^{\overline{\perp}}$ and $\overline{\rm Tor}_{r+1}(W, X)^+ \cong \overline{\rm Ext}^1(X, K) = 0$. It follows that $\overline{\rm Tor}_{r+1}(W, X) = 0$ for every $W \in \widetilde{\mathcal{W}}$, i.e. $X \in \widehat{\mathcal{GF}_r}$, by Proposition \ref{gorrflatcomplex} again. 
\end{proof}

Since the class $\widehat{\mathcal{GF}_r}$ is closed under suspensions, we have that $(\widehat{\mathcal{GF}_r}, (\widehat{\mathcal{GF}_r})\mbox{}^{\overline{\perp}})$ is a cotorsion pair and $(\widehat{\mathcal{GF}_r})\mbox{}^\perp = (\widehat{\mathcal{GF}_r})\mbox{}^{\overline{\perp}}$. So $(\widehat{\mathcal{GF}_r}, (\widehat{\mathcal{GF}_r})\mbox{}^\perp)$ is a cotorsion pair. By Lemma \ref{lemaper}, along with the argument used in \cite[Theorem 6.3]{Perez}, we have that $(\widehat{\mathcal{GF}_r}, (\widehat{\mathcal{GF}_r})\mbox{}^\perp)$ is a cotorsion pair cogenerated by $(\widehat{\mathcal{GF}_r})^{\leq \kappa}$, so it is complete. Moreover, it is easy to show that $\widetilde{\mathcal{F}_r} = \widehat{\mathcal{GF}_r} \cap \widetilde{\mathcal{W}}$ and $(\widehat{\mathcal{GF}_r})\mbox{}^\perp = (\widetilde{\mathcal{F}_r})\mbox{}^\perp \cap \widetilde{\mathcal{W}}$. So we obtain the analogous of Theorem \ref{gorrflatmodel} in $\Cadl$, applying Hovey's correspondence. 

\vspace{0.5cm}

\begin{theorem} If $R$ is a commutative $n$-Gorenstein ring, then the two compatible and complete cotorsion pairs \[ (\widehat{\mathcal{GF}_r}, (\widetilde{\mathcal{F}_r})^\perp \cap \widetilde{\mathcal{W}}) \mbox{ \ and \ } (\widehat{\mathcal{GF}_r} \cap \widetilde{\mathcal{W}}, (\widetilde{\mathcal{F}_r})^\perp) \] give rise to a unique Abelian model structure on $\Cadl$, called the \underline{Gorenstein-$r$-flat model struc}- \underline{ture}, such that $\widehat{\mathcal{GF}_r}$, $(\widetilde{\mathcal{F}_r})^\perp$ and $\widetilde{\mathcal{W}}$ are the classes of cofibrant, fibrant and trivial objects, respectively. 
\end{theorem}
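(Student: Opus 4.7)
The plan is to deduce the theorem from Hovey's correspondence (Theorem 1.1), so the task reduces to verifying its three hypotheses: both pairs are cotorsion pairs, both are complete and compatible, and the class $\widetilde{\mathcal{W}}$ is thick. Most of the work has already been packaged into the paragraph immediately preceding the statement; the proof is essentially an assembly argument.

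First I would collect the ingredients. The preceding paragraph establishes that $(\widehat{\mathcal{GF}_r},(\widehat{\mathcal{GF}_r})^\perp)$ is a cotorsion pair cogenerated by the set $(\widehat{\mathcal{GF}_r})^{\leq\kappa}$, hence complete by the Eklof--Trlifaj theorem (applied via Proposition 2.1 of \cite{Perez}). It also records the two identifications
\[
\widetilde{\mathcal{F}_r} = \widehat{\mathcal{GF}_r}\cap\widetilde{\mathcal{W}}, \qquad (\widehat{\mathcal{GF}_r})^\perp = (\widetilde{\mathcal{F}_r})^\perp\cap\widetilde{\mathcal{W}}.
\]
Substituting these into the two pairs displayed in the theorem turns the first pair $(\widehat{\mathcal{GF}_r},(\widetilde{\mathcal{F}_r})^\perp\cap\widetilde{\mathcal{W}})$ into exactly $(\widehat{\mathcal{GF}_r},(\widehat{\mathcal{GF}_r})^\perp)$ and the second pair $(\widehat{\mathcal{GF}_r}\cap\widetilde{\mathcal{W}},(\widetilde{\mathcal{F}_r})^\perp)$ into exactly $(\widetilde{\mathcal{F}_r},(\widetilde{\mathcal{F}_r})^\perp)$. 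The pair $(\widetilde{\mathcal{F}_r},(\widetilde{\mathcal{F}_r})^\perp)$ is cogenerated by $(\widetilde{\mathcal{F}_r})^{\leq\kappa}$ (by \cite[Corollary 4.1]{Perez}) and therefore complete as well. Compatibility in Hovey's sense is then tautological: we are in the pattern $(\mathcal{A},\mathcal{B}\cap\mathcal{D})$, $(\mathcal{A}\cap\mathcal{D},\mathcal{B})$ with $\mathcal{A}=\widehat{\mathcal{GF}_r}$, $\mathcal{B}=(\widetilde{\mathcal{F}_r})^\perp$, $\mathcal{D}=\widetilde{\mathcal{W}}$.

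Next I would verify the thickness of $\widetilde{\mathcal{W}}$. Since $R$ is $n$-Gorenstein, $\Cadl$ is a Gorenstein category (see the example following Definition \ref{defGor} together with the fact that $n$-Gorenstein rings make $\Modl$ Gorenstein), so $\widetilde{\mathcal{W}}=\mathcal{W}(\Cadl)$ is the class of complexes of finite projective dimension, which is thick in any Gorenstein category (as noted after Definition \ref{defGor}).

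With these three ingredients in place, Hovey's correspondence produces a unique Abelian model structure on $\Cadl$ whose cofibrant, fibrant and trivial classes are the specified $\widehat{\mathcal{GF}_r}$, $(\widetilde{\mathcal{F}_r})^\perp$ and $\widetilde{\mathcal{W}}$, respectively. There is no genuine obstacle at this stage; the substantive difficulties — namely the filtration lemma underlying completeness of $(\widehat{\mathcal{GF}_r},(\widehat{\mathcal{GF}_r})^\perp)$ and the identities $\widetilde{\mathcal{F}_r}=\widehat{\mathcal{GF}_r}\cap\widetilde{\mathcal{W}}$, $(\widehat{\mathcal{GF}_r})^\perp=(\widetilde{\mathcal{F}_r})^\perp\cap\widetilde{\mathcal{W}}$ — have already been handled in the discussion leading up to the theorem, and the present statement is a formal consequence.
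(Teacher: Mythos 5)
Your proposal is correct and follows essentially the same route the paper intends: the theorem is stated as a direct consequence of the preceding paragraph, and you have correctly identified and assembled the ingredients — completeness of $(\widehat{\mathcal{GF}_r},(\widehat{\mathcal{GF}_r})^\perp)$ via the $(\widehat{\mathcal{GF}_r})^{\leq\kappa}$-filtration, the identifications $\widetilde{\mathcal{F}_r}=\widehat{\mathcal{GF}_r}\cap\widetilde{\mathcal{W}}$ and $(\widehat{\mathcal{GF}_r})^\perp=(\widetilde{\mathcal{F}_r})^\perp\cap\widetilde{\mathcal{W}}$, thickness of $\widetilde{\mathcal{W}}$, and Hovey's correspondence. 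One small citation quibble: for completeness of $(\widetilde{\mathcal{F}_r},(\widetilde{\mathcal{F}_r})^\perp)$ in $\Cadl$ you invoke \cite[Corollary 4.1]{Perez}, but in this paper that reference is used for $\widetilde{\mathcal{P}_r}$; the flat analogue rests on the $\kappa$-filtration result for $\widetilde{\mathcal{F}_r}$ alluded to via \cite[Lemma 6.1]{Perez} and \cite[Proposition 4.9]{Gil}, so you should double-check which corollary of \cite{Perez} actually handles $\widetilde{\mathcal{F}_r}$.
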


\vspace{0.5cm}

We finish this section giving some remarks on covers by the classes $\mathcal{GF}_r$ and $\widehat{\mathcal{GF}_r}$. Recall that given an Abelian category $\mathcal{C}$ and a class $\mathcal{A}$ of objects of $\mathcal{C}$, a map $f : A \longrightarrow X$ is said to be an \underline{$\mathcal{A}$-cover} of $X$ if $A \in \mathcal{A}$ and if the following conditions are satisfied: 
\begin{itemize}
\item[{\bf (1)}] If $f : A' \longrightarrow X$ is another map with $A' \in \mathcal{A}$, then there exists a map $h : A' \longrightarrow A$ (not necessarily unique) such that $f' = f \circ h$.

\item[{\bf (2)}] If $A' = A$ in {\bf (1)}, then every map $h : A \longrightarrow A$ satisfying the equality $f' = f \circ h$ is an automorphism of $A$. 
\end{itemize}
If $f$ satisfies {\bf (1)} but may be not {\bf (2)}, then $f$ is called an \underline{$\mathcal{A}$-pre-cover}. The notions of \underline{$\mathcal{B}$-envelope} and \underline{$\mathcal{B}$-pre-envelope} are dual. A cotorsion pair $(\mathcal{A,B})$ is said to be \underline{perfect} if every object of $\mathcal{C}$ has an $\mathcal{A}$-cover and a $\mathcal{B}$-envelope. In \cite[Corollary 2.3.7]{Gobel}, it is proved that if $(\mathcal{A,B})$ is a complete cotorsion pair and $\mathcal{A}$ is closed under direct limits, then $(\mathcal{A,B})$ is perfect.\footnote{This result is proven for modules over a ring, but it remains true in any Grothendieck category.}

\vspace{0.5cm}

\begin{remark} {\rm \
\begin{itemize}
\item[{\bf (i)}] It is straightforward to show that Gorenstein-flat modules are closed under direct limits. Since $(\mathcal{GF}_0,\mathcal{GC})$ is a complete cotorsion pair with $\mathcal{GF}_0$ closed under direct limits, it follows that $(\mathcal{GF}_0, \mathcal{GC})$ is a perfect cotorsion pair, i.e. every left $R$-module has a Gorenstein-flat cover and a Gorenstein-cotorsion envelope. 

\item[{\bf (ii)}] For every fixed chain complex $X \in {\rm Ob}(\Cadr)$, the functor $\overline{{\rm Tor}}_1(X,-)$ preserves direct limits (This is a consequence of \cite[Proposition 4.2.1 5]{GR}). It follows the class $\widehat{\mathcal{GF}_0}$ is closed under direct limits, and hence the pair $(\widehat{\mathcal{GF}_0}, \widehat{\mathcal{GC}})$ is perfect.  We conclude that every chain complex $X \in \Cadl$ has a Gorenstein-flat cover, provided that $R$ is a commutative $n$-Iwanaga-Gorenstein ring. This result also appears in \cite[Theorem 5.8.4]{GR}, where the proof is more constructive from the author's point of view. 
\end{itemize} }
\end{remark}


\section{Gorenstein homological dimensions over graded rings}

In this section, we shall consider an associative ring $R$ with unit and the $\mathbb{Z}$-graded ring $A:= R[x] / (x^2)$, which has a direct sum decomposition $A = \cdots \oplus 0 \oplus (x) \oplus R \oplus 0 \oplus \cdots$, where the scalars $r \in R$ are the elements of degree $0$, and the elements in the ideal $(x)$ form the terms of degree $-1$. It is not hard to see that the categories $\Modgrad$ and $\Cadl$ are isomorphic. This fact shall allows us to prove that Gorenstein-$r$-projective $A$-modules and dg-$r$-projective chain complexes over $R$ are in one-to-one correspondence, provided $R$ satisfies certain conditions. The same holds for Gorenstein-$r$-injective and Gorenstein-$r$-flat $A$-modules. This result is an extension of the case $r = 0$ proved in \cite[Propositions 3.6, 3.8 and 3.10]{HoveyGillespie}.

Let $\Phi : \Modgrad \longrightarrow \Cadl$ be the functor defined as follows:
\begin{itemize}
\item[ {\bf (1)}] Given a graded $A$-module $M = \bigoplus_{n \in \mathbb{Z}} M_n$, note that if $y \in M_n$ then $x \cdot y \in M_{n-1}$, since $x$ has degree $-1$. Denote by $\Phi(M)_n$ the set $M_n$ endowed with the structure of $R$-module provided by the graded multiplication. Let $\partial_n : \Phi(M)_n \longrightarrow \Phi(M)_{n-1}$ be the map $y \mapsto x \cdot y$. It is easy to check that $\Phi(M) = (\Phi(M)_n, \partial_n)_{n \in \mathbb{Z}}$ is a chain complex over $R$. 

\item[ {\bf (2)}] Let $f : M \longrightarrow N$ be a homomorphism of graded $A$-modules. Then $f(M_n) \subseteq N_n$, for every $n \in \mathbb{Z}$. It follows that $f|_{M_n}$ is an $R$-homomorphism. The family of maps $\{ \Phi(f)_n := f|_{M_n} \}_{n \in \mathbb{Z}}$ defines a chain map $\Phi(f)$. 
\end{itemize}
The functor $\Phi$ is an isomorphism, and we shall denote its inverse by $\Psi : \Cadl \longrightarrow \Modgrad$. This functor establishes the one-to-one correspondences given in \cite[Propositions 3.6, 3.8 and 3.10]{HoveyGillespie}. 

It follows that $\Phi$ and $\Psi$ map projective (resp. injective) objects into projective (resp. injective) objects. It is also easy to check that both $\Psi$ and $\Phi$ preserves exact sequences. 

Concerning flat objects, it is important to recall how tensor products are defined for $\mathbb{Z}$-graded $A$-modules. Given two graded $A$-modules $M = (M_n)_{n \in \mathbb{Z}}$ and $N = (N_m)_{m \in \mathbb{Z}}$, the tensor product $M \otimes_{\mathbb{Z}} N = (\bigoplus_{n + m = k} M_n \otimes_{\mathbb{Z}} N_m)_{k \in \mathbb{Z}}$ has also a $\mathbb{Z}$-graduation. Let $Q$ be the sub-$\mathbb{Z}$-module generated by the elements $(a \cdot y) \otimes z - y \otimes (a \cdot z)$ where $a \in A$, $y \in M$ and $z \in N$. The tensor product of $M$ and $N$ over $A$ is defined by $M \otimes_A N = (M \otimes_{\mathbb{Z}} N) / Q$. It is clear that $M \otimes_A N \cong \Phi(M) \overline{\otimes} \Phi(N)$ and $X \overline{\otimes} Y \cong \Psi(X) \otimes_A \Psi(Y)$ for every $M, N \in {\Modgrad}$ and $X, Y \in \Cadl$. So it follows that $M$ is flat in ${\Modgrad}$ if, and only if, $\Phi(M)$ is flat in $\Cadl$. Similarly, $X$ is flat in $\Cadl$ if, and only if, $\Psi(X)$ is flat in ${\Modgrad}$. The following lemma is straightforward. 

\vspace{0.5cm}

\begin{lemma} \label{isoseqs} For every $i \geq 1$, $M, N \in \Modgrad$, and $Y, Z \in \Cadl$:
\begin{itemize}
\item[ {\bf (1)}] ${\rm Ext}^i_A(M, N) \cong {\rm Ext}^i(\Phi(M), \Phi(N))$.

\item[ {\bf (2)}] ${\rm Tor}^A_i(M, N) \cong \overline{{\rm Tor}}_i(\Phi(M), \Phi(N))$.

\item[ {\bf (3)}] ${\rm Ext}^i(Y, Z) \cong {\rm Ext}^i_A(\Psi(Y), \Psi(Z))$. 

\item[ {\bf (4)}] $\overline{{\rm Tor}}_i(Y, Z) \cong {\rm Tor}^A_i(\Psi(Y), \Psi(Z))$. 
\end{itemize} 
\end{lemma}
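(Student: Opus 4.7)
The plan is to exploit the dictionary provided by the category isomorphism $\Phi : \Modgrad \longrightarrow \Cadl$ with inverse $\Psi$. The excerpt has already established that both functors preserve projective objects, injective objects and exact sequences, and that there are natural isomorphisms $M \otimes_A N \cong \Phi(M) \overline{\otimes} \Phi(N)$ and $X \overline{\otimes} Y \cong \Psi(X) \otimes_A \Psi(Y)$. Moreover, full faithfulness of $\Phi$ yields a natural isomorphism $\mathrm{Hom}_A(M, N) \cong \mathrm{Hom}_{\Cadl}(\Phi(M), \Phi(N))$ of abelian groups, and similarly for $\Psi$. Given these ``degree zero'' correspondences, each of the four isomorphisms will follow formally by choosing resolutions and transferring them across the equivalence.

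For {\bf (1)}, I would pick a projective resolution $\cdots \to P_1 \to P_0 \to M \to 0$ in $\Modgrad$. Since $\Phi$ preserves projectives and exact sequences, $\Phi(P_\bullet) \to \Phi(M)$ is a projective resolution of $\Phi(M)$ in $\Cadl$. The natural isomorphism $\mathrm{Hom}_A(P_k, N) \cong \mathrm{Hom}_{\Cadl}(\Phi(P_k), \Phi(N))$ is compatible with the differentials (because $\Phi$ is a functor), so it induces an isomorphism of the deleted Hom-cochain complexes; taking $i$-th cohomology gives {\bf (1)}. For {\bf (2)}, the same projective resolution works, since projective complexes in $\Cadl$ are in particular flat, so $\Phi(P_\bullet)$ computes $\overline{\mathrm{Tor}}_i(\Phi(M), \Phi(N))$. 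The natural isomorphism $P_k \otimes_A N \cong \Phi(P_k) \overline{\otimes} \Phi(N)$ gives an isomorphism of the associated chain complexes, and taking $i$-th homology yields {\bf (2)}.

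Parts {\bf (3)} and {\bf (4)} are completely symmetric: pick a projective resolution $Q_\bullet \to Y$ in $\Cadl$, apply $\Psi$ to obtain a projective resolution $\Psi(Q_\bullet) \to \Psi(Y)$ in $\Modgrad$ (using that $\Psi$ also preserves projectives and exactness), and repeat the previous argument with the roles of $\Phi$ and $\Psi$ exchanged. The only point requiring care — and the mildest ``obstacle'' in the proof — is verifying that the various degree-zero isomorphisms are natural in \emph{both} arguments, so that they commute with the horizontal differentials of the resolution and therefore assemble into isomorphisms of cochain (resp.\ chain) complexes rather than mere termwise isomorphisms. This bifunctoriality is however immediate from the way the isomorphisms $\mathrm{Hom}_A(-,-) \cong \mathrm{Hom}_{\Cadl}(\Phi(-),\Phi(-))$ and $- \otimes_A - \cong \Phi(-) \overline{\otimes} \Phi(-)$ are induced by $\Phi$ itself. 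Hence the entire lemma reduces to routine bookkeeping once the dictionary $\Phi \leftrightarrow \Psi$ is in place.
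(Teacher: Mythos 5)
Your argument is correct and is precisely the routine verification the paper has in mind: the paper offers no proof, calling the lemma straightforward, and the intended justification is exactly your transfer of projective resolutions across the category isomorphism $\Phi \leftrightarrow \Psi$ together with the natural identifications of $\mathrm{Hom}$ and of $\otimes_A$ with $\overline{\otimes}$. Your attention to naturality in both arguments (so the termwise isomorphisms commute with the differentials) is the right point to flag, and the observation that $\Phi$ of a projective resolution still computes $\overline{\mathrm{Tor}}$ is sound since projective complexes are $\overline{\otimes}$-acyclic.
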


\vspace{0.25cm}

Recall from \cite[Definition 3.3]{Gil}, that given a cotorsion pair $(\mathcal{A, B})$ in $\Modl$, a chain complex $X$ is a \underline{dg-$\mathcal{A}$-complex}, denoted $X \in {\rm dg}\widetilde{\mathcal{A}}$, if $X_m \in \mathcal{A}$ for every $m \in \mathbb{Z}$ and if every chain map $X \longrightarrow B$ is null homotopic whenever $B \in \widetilde{\mathcal{B}}$. Similarly, $X$ is a dg-$\mathcal{B}$-complex, denoted $X \in {\rm dg}\widetilde{\mathcal{B}}$, if $X_m \in \mathcal{B}$ for every $m \in \mathbb{Z}$ and if every chain $A \longrightarrow X$ is null homotopic whenever $A \in \widetilde{\mathcal{A}}$. 

The classes ${\rm dg}\widetilde{\mathcal{P}_0}$, ${\rm dg}\widetilde{\mathcal{I}_0}$ and ${\rm dg}\widetilde{\mathcal{F}_0}$ are called dg-projective, dg-injective and dg-flat complexes, respectively. It is known that ${\rm dg}\widetilde{\mathcal{P}_0} = \mbox{}^\perp\mathcal{E}$ and ${\rm dg}\widetilde{\mathcal{I}_0} = \mathcal{E}^\perp$, where $\mathcal{E}$ is the class of exact chain complexes (See \cite[Propositions 2.3.4 and 2.3.5]{GR}). Moreover, $({\rm dg}\widetilde{\mathcal{P}_0}, \mathcal{E})$ and $(\mathcal{E}, {\rm dg}\widetilde{\mathcal{I}_0})$ are cotorsion pairs. 

\vspace{0.5cm}

\begin{corollary} Let $R$ be a left and right Noetherian ring of finite global dimension. Then there is a one-to-one correspondence between the exact chain complexes over $R$ and the $A$-modules in $\mathcal{W}$. 
\end{corollary}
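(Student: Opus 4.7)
The plan is to combine the isomorphism of categories $\Phi : \Modgrad \longrightarrow \Cadl$ (with inverse $\Psi$) with the identification $\mathcal{P}_n(\Cadl) = \widetilde{\mathcal{P}_n}$ recalled earlier in the paper. Since $\Phi$ and $\Psi$ are mutually inverse, it suffices to prove that a chain complex $X \in \Cadl$ is exact if and only if the $A$-module $\Psi(X)$ has finite projective dimension.

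First I would observe that $\Phi$ and $\Psi$ preserve projectives and exact sequences; in fact Lemma \ref{isoseqs} yields the natural isomorphism ${\rm Ext}^i_A(\Psi(X), \Psi(Y)) \cong {\rm Ext}^i(X, Y)$ for every $i \geq 1$. Since $\Psi$ is essentially surjective, this translates into the key identity
\[ {\rm pd}_A(\Psi(X)) = {\rm pd}_{\Cadl}(X), \]
so $\Psi(X)$ belongs to $\mathcal{W}$ precisely when $X$ has finite projective dimension in $\Cadl$.

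Next I would invoke \cite[Proposition 3.1]{Perez}, which asserts that $\mathcal{P}_n(\Cadl) = \widetilde{\mathcal{P}_n}$; that is, a chain complex has projective dimension at most $n$ if and only if it is exact and each of its cycle modules lies in $\mathcal{P}_n$. The ``only if'' direction gives immediately that every complex of finite projective dimension is exact. For the converse, let $X$ be an exact complex and let $d$ denote the (finite) global dimension of $R$. Then every cycle $Z_m(X)$ is an $R$-module, hence satisfies ${\rm pd}_R(Z_m(X)) \leq d$, so $X \in \widetilde{\mathcal{P}_d} = \mathcal{P}_d(\Cadl)$, and in particular ${\rm pd}_{\Cadl}(X) \leq d < \infty$.

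Putting the two steps together produces the equivalence $X \in \mathcal{E} \Longleftrightarrow \Psi(X) \in \mathcal{W}$, and transporting back through $\Phi$ yields the one-to-one correspondence asserted in the statement. I do not foresee any serious obstacle here: both key inputs, namely the Ext-preservation of Lemma \ref{isoseqs} and Perez's characterization of the $n$-projective complexes, are already available in the excerpt, and the role of the finite-global-dimension hypothesis is precisely to supply a \emph{uniform} bound $d$ on the projective dimensions of all cycle modules simultaneously. The left and right Noetherianness of $R$ enters only as part of the background needed to run these results smoothly.
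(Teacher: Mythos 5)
Your proof is correct, and it takes a genuinely different route from the paper. The paper shows $\Psi(E) \in \mathcal{W}$ for an exact complex $E$ by passing through the Gorenstein cotorsion pair: it invokes the Hovey--Gillespie correspondence (\cite[Proposition 3.6]{HoveyGillespie}) to identify each Gorenstein-projective $A$-module $C$ with a dg-projective complex $X$, computes ${\rm Ext}^1_A(C, \Psi(E)) = {\rm Ext}^1_A(\Psi(X), \Psi(E)) \cong {\rm Ext}^1(X, E) = 0$ via Lemma \ref{isoseqs}, and concludes $\Psi(E) \in (\mathcal{GP}_0(\Modgrad))^\perp = \mathcal{W}$. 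Your argument bypasses the Gorenstein machinery entirely: since $\Psi$ is an exact isomorphism of categories preserving projectives, it preserves projective dimension, so the claim reduces to showing that a complex over $R$ is exact if and only if it has finite projective dimension in $\Cadl$. The forward direction is immediate from $\mathcal{P}_n(\Cadl) = \widetilde{\mathcal{P}_n}$, and the converse uses the finite global dimension $d$ of $R$ to bound uniformly the projective dimensions of the cycle modules, giving $X \in \widetilde{\mathcal{P}_d}$. Your route is more elementary and self-contained, it proves both implications explicitly (the paper checks only that exact complexes land in $\mathcal{W}$ before asserting the correspondence), and it makes the role of the finite-global-dimension hypothesis completely transparent; the paper's route has the advantage of meshing directly with the Gorenstein-projective analysis carried out in the theorem that this corollary supports.
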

\begin{proof} Let $E$ be an exact complex over $R$. Then ${\rm Ext}^1(X, E) = 0$ for every dg-projective complex $X$. Consider $\Psi(E)$ and let $C$ be a Gorenstein-projective $A$-module. By \cite[Propositions 3.6]{HoveyGillespie} and the comments in the beginning of this section, there exists a unique dg-projective chain complex $X$ such that $C = \Psi(X)$. We have ${\rm Ext}^1_A(C, \Psi(E)) = {\rm Ext}^1_A(\Psi(X), \Psi(E)) \cong {\rm Ext}^1(X, E) = 0$, by Lemma \ref{isoseqs}. It follows $\Psi(E) \in \mathcal{W}$, since $(\mathcal{GP}, \mathcal{W})$ is a cotorsion pair in $\Modgrad$. The mapping $E \mapsto \Psi(E)$ gives the desired correspondence. 
\end{proof}

In \cite[Proposition 3.6]{Gil}, J Gillespie proves that if $(\mathcal{A, B})$ is a complete cotorsion pair in $\Modl$, then $(\widetilde{\mathcal{A}}, {\rm dg}\widetilde{\mathcal{B}})$ and $({\rm dg}\widetilde{\mathcal{A}}, \widetilde{\mathcal{B}})$ are cotorsion pairs in $\Cadl$. Moreover, if $(\mathcal{A, B})$ is also hereditary, then $\widetilde{\mathcal{A}} = {\rm dg}\widetilde{\mathcal{A}} \cap \mathcal{E}$ and $\widetilde{\mathcal{B}} = {\rm dg}\widetilde{\mathcal{B}} \cap \mathcal{E}$. Since the pair $(\mathcal{P}_r, (\mathcal{P}_r)\mbox{}^\perp)$ is complete by \cite[Theorem 7.4.6]{EJ}, it follows that the class ${\rm dg}\widetilde{\mathcal{P}_r}$ of dg-$r$-projective chain complexes is the left half of a cotorsion pair $({\rm dg}\widetilde{\mathcal{P}_r}, (\widetilde{\mathcal{P}_r})\mbox{}^\perp \cap \mathcal{E})$. We have similar results for the classes of dg-$r$-injective chain complexes and dg-$r$-flat complexes. Knowing these facts and using the previous corollary, we can prove the following theorem, which extends the results presented in \cite[Propositions 3.6, 3.8 and 3.10]{HoveyGillespie} to any (Gorenstein) homological dimension. 

\vspace{0.5cm}

\begin{theorem} \label{perezcorresponde} The functor $\Psi : \Cadl \longrightarrow \Modgrad$ maps:
\begin{itemize}
\item[ {\bf (1)}] dg-$r$-projective complexes into Gorenstein-$r$-projective $A$-modules,

\item[ {\bf (2)}] dg-$r$-injective complexes into Gorenstein-$r$-injective $A$-modules, and

\item[ {\bf (3)}] dg-$r$-flat complexes into Gorenstein-$r$-flat $A$-modules.
\end{itemize}
If $R$ is a left and right Noetherian ring of finite global dimension, then the inverse functor $\Phi : \Modgrad \longrightarrow \Cadl$ maps:
\begin{itemize}
\item[ {\bf (1')}] Gorenstein-$r$-projective $A$-modules into dg-$r$-projective complexes,

\item[ {\bf (2')}] Gorenstein-$r$-injective $A$-modules into dg-$r$-injective complexes, and

\item[ {\bf (3')}] Gorenstein-$r$-flat $A$-modules into dg-$r$-flat complexes. 
\end{itemize} 
\end{theorem}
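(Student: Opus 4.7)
My plan is to bootstrap from the $r=0$ case proved by Hovey and Gillespie in \cite[Propositions 3.6, 3.8 and 3.10]{HoveyGillespie}, using as key inputs: the ${\rm Ext}/{\rm Tor}/\overline{{\rm Tor}}$ transfer under $\Phi/\Psi$ (Lemma \ref{isoseqs}); the corollary just proved identifying, under the Noetherian/finite-global-dimension hypothesis, exact complexes over $R$ with $A$-modules in $\mathcal{W}$; the descriptions $(\mathcal{GP}_r)^\perp = (\mathcal{P}_r)^\perp \cap \mathcal{W}$, $\mbox{}^\perp(\mathcal{GI}_r) = \mbox{}^\perp(\mathcal{I}_r) \cap \mathcal{W}$ and $(\mathcal{GF}_r)^\perp = (\mathcal{F}_r)^\perp \cap \mathcal{W}$ (Propositions \ref{igualdadesperp}, \ref{gorinjpairs} and the flat analogue established in Section 5); and Gillespie's hereditary-pair machinery cited just before the theorem, which applied to the hereditary cotorsion pair $(\mathcal{P}_r,(\mathcal{P}_r)^\perp)$ yields a cotorsion pair $({\rm dg}\widetilde{\mathcal{P}_r}, \widetilde{(\mathcal{P}_r)^\perp})$ in $\Cadl$ with the identity $\widetilde{(\mathcal{P}_r)^\perp} = {\rm dg}\widetilde{(\mathcal{P}_r)^\perp} \cap \mathcal{E}$ (and analogues for injectives and flats).

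The first step is to record that $\Phi$ preserves exactness, projectives, injectives and flats, and satisfies $\Phi(M)\,\overline{\otimes}\,\Phi(N) \cong M \otimes_A N$; combining this with Lemma \ref{isoseqs} gives bijections $\mathcal{P}_r(\Modgrad) \longleftrightarrow \widetilde{\mathcal{P}_r}$ and $(\mathcal{P}_r)^\perp \longleftrightarrow (\widetilde{\mathcal{P}_r})^\perp$, and similarly for the injective and flat versions. Using additionally $\Phi(\mathcal{W}) = \mathcal{E}$ from the preceding corollary together with the hereditary identity, $\Phi$ restricts to a bijection $(\mathcal{P}_r)^\perp \cap \mathcal{W} \longleftrightarrow \widetilde{(\mathcal{P}_r)^\perp}$, and by Proposition \ref{igualdadesperp} the left-hand side equals $(\mathcal{GP}_r)^\perp$.

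The projective statements (1) and (1') then follow simultaneously from a single chain of biconditionals: for any $C \in \Modgrad$,
\begin{align*}
C \in \mathcal{GP}_r
& \iff {\rm Ext}^1_A(C,L) = 0 \text{ for every } L \in (\mathcal{GP}_r)^\perp \\
& \iff {\rm Ext}^1(\Phi(C),\Phi(L)) = 0 \text{ for every } \Phi(L) \in \widetilde{(\mathcal{P}_r)^\perp} \\
& \iff \Phi(C) \in {\rm dg}\widetilde{\mathcal{P}_r},
\end{align*}
where the middle equivalence is Lemma \ref{isoseqs}(1), and the outer two come from the cotorsion pairs $(\mathcal{GP}_r,(\mathcal{GP}_r)^\perp)$ and $({\rm dg}\widetilde{\mathcal{P}_r},\widetilde{(\mathcal{P}_r)^\perp})$ respectively. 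Parts (2) and (2') are proved in exactly the same style, dualised, using Proposition \ref{gorinjpairs}(4) and the injective counterpart of Gillespie's pair. Parts (3) and (3') run along the same lines but with the bar-tensor product: Lemma \ref{isoseqs}(2),(4) transports the ${\rm Tor}/\overline{{\rm Tor}}$ data, the identity $(\mathcal{GF}_r)^\perp = (\mathcal{F}_r)^\perp \cap \mathcal{W}$ replaces Proposition \ref{igualdadesperp}, and the coincidence of $\overline{\perp}$ and $\perp$ on suspension-closed classes (Lemma \ref{cerradura} and Theorem \ref{teobarra}) lets me reuse ordinary ${\rm Ext}^1$ throughout.

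The main obstacle I anticipate is verifying the hereditary identity $\widetilde{(\mathcal{P}_r)^\perp} = {\rm dg}\widetilde{(\mathcal{P}_r)^\perp} \cap \mathcal{E}$ and its injective/flat counterparts in exactly the form required, together with the bookkeeping in the flat case to make sure the dg-$r$-flat cotorsion pair really matches, through $\Phi$, the pair on $\widehat{\mathcal{GF}_r}$ built in Section 5 via bar-extensions. Once these compatibilities are in place, the theorem collapses to a diagram chase of orthogonality on both sides of $\Phi/\Psi$.
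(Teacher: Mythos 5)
Your symmetric orthogonality-transfer argument is essentially the paper's own proof of the primed statements: for (1') the paper likewise shows that $\Phi$ and $\Psi$ put $(\widetilde{\mathcal{P}_r})^\perp$ in bijection with $(\mathcal{P}_r)^\perp$, uses the corollary identifying $\mathcal{E}$ with $\mathcal{W}$, the identities $({\rm dg}\widetilde{\mathcal{P}_r})^\perp=(\widetilde{\mathcal{P}_r})^\perp\cap\mathcal{E}$ and $(\mathcal{P}_r)^\perp\cap\mathcal{W}=(\mathcal{GP}_r)^\perp$, and then concludes from ${\rm dg}\widetilde{\mathcal{P}_r}=\mbox{}^\perp\bigl(({\rm dg}\widetilde{\mathcal{P}_r})^\perp\bigr)$ and $\mathcal{GP}_r=\mbox{}^\perp\bigl((\mathcal{GP}_r)^\perp\bigr)$. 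The genuine gap is in the unprimed half. Statements (1)--(3) are asserted for an arbitrary associative ring $R$ with unit; the Noetherian/finite-global-dimension hypothesis is imposed only for (1')--(3'). Every link of your chain of biconditionals, however, needs that hypothesis: the corollary $\Phi(\mathcal{W})=\mathcal{E}$ is proved only under it, Proposition \ref{igualdadesperp} and the very fact that $(\mathcal{GP}_r,(\mathcal{GP}_r)^\perp)$ is a cotorsion pair in $\Modgrad\cong\Cadl$ come from the Gorenstein-category results of Section 4 (so again $R$ Gorenstein), and without these the first equivalence ``$C\in\mathcal{GP}_r \iff {\rm Ext}^1_A(C,L)=0$ for all $L\in(\mathcal{GP}_r)^\perp$'' and the bijection $(\mathcal{P}_r)^\perp\cap\mathcal{W}\leftrightarrow\widetilde{(\mathcal{P}_r)^\perp}$ are unjustified. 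So your proposal only yields a conditional version of (1)--(3), which is strictly weaker than the statement.

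The paper avoids this by proving (1) directly and asymmetrically, with no hypothesis on $R$: given $X\in{\rm dg}\widetilde{\mathcal{P}_r}$, choose a partial projective resolution $0\to C\to P_{r-1}\to\cdots\to P_0\to\Psi(X)\to 0$ in $\Modgrad$ and show $C$ is Gorenstein-projective. Applying $\Phi$ and dimension shifting, ${\rm Ext}^1(\Phi(C),E)\cong{\rm Ext}^1(X,E')$ where $E'$ is an $r$-th cosyzygy of the exact complex $E$; one checks $E'\in(\widetilde{\mathcal{P}_r})^\perp\cap\mathcal{E}=({\rm dg}\widetilde{\mathcal{P}_r})^\perp$, which uses only that $(\mathcal{P}_r,(\mathcal{P}_r)^\perp)$ is a complete hereditary cotorsion pair over an arbitrary ring, so ${\rm Ext}^1(\Phi(C),E)=0$ for every exact $E$, i.e.\ $\Phi(C)$ is dg-projective; the unconditional direction of \cite[Proposition 3.6]{HoveyGillespie} then gives that $C=\Psi(\Phi(C))$ is Gorenstein-projective, whence ${\rm Gpd}(\Psi(X))\le r$. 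You would need to add an argument of this type (and its injective and flat analogues, where the bar-Ext/suspension machinery you cite plays the role of ordinary Ext) to obtain the first half of the theorem in its stated generality; with the extra hypothesis in force, your biconditional chain then handles (1')--(3') exactly as the paper does.
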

\begin{proof} We only prove {\bf (1)} and {\bf (1')}. Let $X \in {\rm dg}\widetilde{\mathcal{P}_r}$ and consider $\Psi(X)$ along with a partial left projective resolution $0 \longrightarrow C \longrightarrow P_{r-1} \longrightarrow \cdots \longrightarrow P_0 \longrightarrow \Psi(X) \longrightarrow 0$. We show that $C$ is a Gorenstein-projective $A$-module. Consider the complex $\Phi(C)$ and let $E$ be an exact complex. We have ${\rm Ext}^1(\Phi(C), E) \cong {\rm Ext}^1(X, E')$, where $E' \in \Omega^{-r}(E)$. Note that $E' \in (\widetilde{\mathcal{P}_r})\mbox{}^\perp$. In fact, if $Z \in \widetilde{\mathcal{P}_r}$ then ${\rm Ext}^1(Z, E') \cong {\rm Ext}^{r+1}(Z, E) = 0$. Also, it is easy to check that $E' \in \mathcal{E}$. So $E' \in (\widetilde{\mathcal{P}_r})\mbox{}^\perp \cap \mathcal{E} = ({\rm dg}\widetilde{\mathcal{P}_r})\mbox{}^\perp$. It follows ${\rm Ext}^1(\Phi(C), E) \cong {\rm Ext}^1(X, E') = 0$, for every $E \in \mathcal{E}$. In other words, $\Phi(C)$ is dg-projective, and by \cite[Propositions 3.6]{HoveyGillespie} we have that $C = \Psi(\Phi(C))$ is a Gorenstein-projective $A$-module. 

Now suppose that $R$ is a left and right Noetherian ring of finite global dimension. Note that $\Psi$ and $\Phi$ also define an one-to-one correspondence between $r$-projective complexes over $R$ and $r$-projective $A$-modules. Let $X \in (\widetilde{\mathcal{P}_r})^\perp$ and consider $\Psi(X)$. Let $M$ be an $r$-projective $A$-module. Then $\Phi(M)$ is an $r$-projective complex. We have ${\rm Ext}^1_A(M, \Psi(X)) \cong {\rm Ext}^1(\Phi(M), X) = 0$. It follows $\Psi(X) \in (\mathcal{P}_r)^\perp$. Hence, $\Psi$ and $\Phi$ give rise to a one-to-one correspondence between $(\widetilde{\mathcal{P}_r})\mbox{}^\perp$ and $(\mathcal{P}_r)\mbox{}^\perp$. Also, by the previous corollary, we have also have a one-to-one correspondence between $\mathcal{E}$ and $\mathcal{W}$. Since $({\rm dg}\widetilde{\mathcal{P}_r})\mbox{}^\perp = (\widetilde{\mathcal{P}_r})\mbox{}^\perp \cap \mathcal{E}$ and $(\mathcal{P}_r)\mbox{}^\perp \cap \mathcal{W} = (\mathcal{GP}_r)^\perp$, we have that a complex $Y$ is in $({\rm dg}\widetilde{\mathcal{P}_r})\mbox{}^\perp$ if and only if $\Psi(Y)$ is in $(\mathcal{GP}_r)^\perp$. Since ${\rm dg}\widetilde{\mathcal{P}_r} = \mbox{}^\perp( ({\rm dg}\widetilde{\mathcal{P}_r})\mbox{}^\perp)$ and $\mathcal{GP}_r = \mbox{}^\perp( (\mathcal{GP}_r)^\perp )$, we have that $\Phi$ maps Gorenstein-$r$-projective $A$-modules into dg-$r$-projective complexes.
\end{proof}


\section*{Acknowledgements} 

The author would like to thank Professor Andr\'e Joyal of the Department of Mathematics at UQ\`AM for patiently listening to my ideas and explanations on this work. This research has been financially supported by ISM-CIRGET and Fondation de l'UQ\`AM. 


\end{document}